
\documentclass{amsart}
\usepackage{amssymb,amsmath,latexsym,mathrsfs,times,tikz,hyperref}

\hypersetup{colorlinks=true, linkcolor=blue, citecolor=blue}

\numberwithin{equation}{section}

\theoremstyle{plain}
\newtheorem{thm}{Theorem}[section]
\newtheorem{dfn}[thm]{Definition}
\newtheorem{cor}[thm]{Corollary}

\newtheorem{prop}[thm]{Proposition}
\newtheorem{lemma}[thm]{Lemma}

\theoremstyle{definition}
\newtheorem{ex}[thm]{Example}

\newtheorem*{sublemma1}{Lemma \ref{thm:configgivescat}.1} 
\newtheorem*{sublemma2}{Lemma \ref{thm:configgivescat}.2}

\newcommand{\del}{\backslash}
\newcommand{\PG}{\mathrm{PG}}
\newcommand{\AG}{\mathrm{AG}}
\newcommand{\GF}{\mathrm{GF}}
\newcommand{\cl}{\mathrm{cl}}

\newcommand{\mcZ}{\mathcal{Z}}

\newcommand{\frp}{\mathbin{\Box}}
\newcommand{\shuffle}{\mathbin{\wr}}

\title{The $\mathcal{G}$-invariant and catenary data of a matroid}

\author[J.~Bonin]{Joseph E.~Bonin} \address[J.~Bonin]
{Department of Mathematics\\ The George Washington University\\
  Washington, D.C. 20052, USA} \email[J.~Bonin] {jbonin@gwu.edu}
\author[J.P.S.~Kung] {Joseph P.S.~Kung} \address[J.P.S.~Kung]
{Department of Mathematics\\
  University of North Texas\\
  Denton, TX 76203, USA} \email[J.P.S.~Kung] {kung@unt.edu}
\date{\today.}

\subjclass{Primary 52B40, 05B35}

\begin{document}

\begin{abstract}    
  The catenary data of a matroid $M$ of rank $r$ on $n$ elements is
  the vector $(\nu(M;a_0,a_1,\ldots,a_r))$, indexed by compositions
  $(a_0,a_1,\ldots,a_r)$, where $a_0 \geq 0$,\, $a_i > 0$ for
  $i \geq 1$, and $a_0+ a_1 + \cdots + a_r = n$, with the coordinate
  $\nu (M;a_0,a_1, \ldots,a_r)$ equal to the number of maximal chains
  or flags $(X_0,X_1, \ldots,X_r)$ of flats or closed sets such that
  $X_i$ has rank $i$,\, $|X_0| = a_0$, and $|X_i - X_{i-1}| = a_i$. We
  show that the catenary data of $M$ contains the same information
  about $M$ as its $\mathcal{G}$-invariant, which was defined by
  H. Derksen [\emph{J.\ Algebr.\ Combin.}\ 30 (2009) 43--86].  The
  Tutte polynomial is a specialization of the $\mathcal{G}$-invariant.
  We show that many known results for the Tutte polynomial have
  analogs for the $\mathcal{G}$-invariant.  In particular, we show
  that for many matroid constructions, the $\mathcal{G}$-invariant of
  the construction can be calculated from the $\mathcal{G}$-invariants
  of the constituents and that the $\mathcal{G}$-invariant of a
  matroid can be calculated from its size, the isomorphism class of
  the lattice of cyclic flats with lattice elements labeled by the
  rank and size of the underlying set.  We also show that the number
  of flats and cyclic flats of a given rank and size can be derived
  from the $\mathcal{G}$-invariant, that the $\mathcal{G}$-invariant
  of $M$ is reconstructible from the deck of $\mathcal{G}$-invariants
  of restrictions of $M$ to its copoints, and that, apart from free
  extensions and coextensions, one can detect whether a matroid is a
  free product from its $\mathcal{G}$-invariant.

\end{abstract}

\maketitle

\markboth{\emph{The $\mathcal{G}$-invariant and catenary data of a
    matroid}}{\emph{The $\mathcal{G}$-invariant and catenary data of a
    matroid}}

\section{The $\mathcal{G}$-invariant}\label{sec:basics}

Motivated by work on the $F$-invariant by Billera, Jia, and Reiner
\cite{Billera}, Derksen introduced the $\mathcal{G}$-invariant for
matroids and polymatroids in \cite{Derksen}. Together with Fink,
Derksen showed in \cite{DerksenFink} that it is a universal valuative
invariant for subdivisions of matroid base polytopes.

Let $M$ be a rank-$r$ matroid with rank function $r$ and closure
operator $\cl$ on the set $\{1,2,\ldots,n\}$.  The \emph{rank
  sequence} $\underline{r}(\pi) = r_1 r_2 \ldots r_n$ of a permutation
$\pi$ on $\{1,2,\ldots,n\}$ is the sequence defined by
$r_1 = r(\{\pi(1)\})$ and for $j \geq 2$,
\[
r_j = r(\{\pi(1), \pi(2),\ldots,\pi(j)\}) - r(\{\pi(1),
\pi(2),\ldots,\pi(j-1)\}).
\]
In a rank sequence, $r_j$ is $0$ or $1$, there are exactly $r$ $1$'s,
and the set $\{\pi(j): r_j =1\}$ is a basis of $M$.  The rank sequence
$\underline{r}(\pi)$ is an \emph{$(n,r)$-sequence}, that is, a
sequence of $n$ terms, $r$ of which are $1$ and the other $n-r$ of
which are $0$.

Let $[\underline{r}]$ be a variable or formal symbol, one for each
$(n,r)$-sequence, and let $\mathcal{G}(n,r)$ be the vector space (over
a field of characteristic zero) of dimension $\binom {n}{r}$
consisting of all formal linear combinations of such symbols.  The
\emph{$\mathcal{G}$-invariant} and its coefficients
$g_{\underline{r}}(M)$ are defined by
\[
\mathcal{G}(M) = \sum_{\underline{r}} g_{\underline{r}}(M)
[\underline{r}] = \sum_{\pi} [\underline{r}(\pi)] 
\]
where the sum is over all $n!$ permutations of $\{1,2,\ldots,n\}$.
(This definition is essentially Derksen's.  We just replace a
quasisymmetric function constructed from $\underline{r}$ by the symbol
$[\underline{r}]$.) A \emph{specialization} of the
$\mathcal{G}$-invariant with values in an abelian group $\mathbb{A}$
is a function assigning a value in $\mathbb{A}$ to each symbol
$[\underline{r}]$.  Specializations may be given directly (as in
Theorem \ref{GtoT}) or indirectly.  In particular, if $L$ is a linear
transformation from $\mathcal{G}(n,r)$ to a vector space $V$, then the
assignment $[\underline{r}] \mapsto L([\underline{r}])$ gives a
specialization.

Recall that the Tutte polynomial $T(M;x,y)$ of a rank-$r$ matroid $M$
on the set $E$ is defined by
\[
T(M;x,y) = \sum_{A \subseteq E}  (x-1)^{r-r(A)} (y-1)^{|A| - r(A)}.
\]
Derksen \cite{Derksen} showed that the $\mathcal{G}$-invariant
specializes to the Tutte polynomial.  In the next theorem, we state
his specialization without using quasisymmetric functions.

\begin{thm}[Derksen]\label{GtoT}
  Let $M$ be a rank-$r$ matroid on a set of $n$ elements.  Then the
  Tutte polynomial $T(M;x,y)$ can be obtained from $\mathcal{G}(M)$ by
  the specialization
  \[ 
  [\underline{r}] \mapsto \sum_{m=0}^n \frac {(x-1)^{r-\mathrm{wt}(r_1
      r_2 \ldots r_m)}(y-1)^{m-\mathrm{wt}(r_1 r_2 \ldots r_m)}}{m!
    (n-m)!},
  \]
  where $\mathrm{wt}(r_1 r_2 \ldots r_m)$ is the number of $1$'s in
  the initial segment $r_1 r_2 \ldots r_m$ of $\underline{r}$.
\end{thm}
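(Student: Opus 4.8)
The plan is to apply the stated specialization termwise to the definition $\mathcal{G}(M)=\sum_{\pi}[\underline{r}(\pi)]$ and to recognize the resulting expression as the corank--nullity generating function of $M$. The key observation is that, for a permutation $\pi$ of $\{1,2,\ldots,n\}$ and an integer $m$ with $0\le m\le n$, the weight $\mathrm{wt}(r_1r_2\cdots r_m)$ of the initial segment of $\underline{r}(\pi)$ equals $r(\{\pi(1),\pi(2),\ldots,\pi(m)\})$: summing the defining differences $r_j=r(\{\pi(1),\ldots,\pi(j)\})-r(\{\pi(1),\ldots,\pi(j-1)\})$ over $j=1,\ldots,m$ telescopes to $r(\{\pi(1),\ldots,\pi(m)\})-r(\emptyset)$.

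Writing $A_\pi^m=\{\pi(1),\ldots,\pi(m)\}$, I would substitute the specialization into $\mathcal{G}(M)$ to obtain
\[
\sum_{\pi}\ \sum_{m=0}^{n}\frac{(x-1)^{r-r(A_\pi^m)}(y-1)^{m-r(A_\pi^m)}}{m!\,(n-m)!}.
\]
Next I would interchange the two summations and, for each fixed $m$, group the permutations according to the set $A=A_\pi^m$. For a fixed subset $A\subseteq E$ with $|A|=m$, the permutations $\pi$ with $A_\pi^m=A$ are exactly those listing the elements of $A$ in some order in the first $m$ positions and the elements of $E\setminus A$ in some order in the last $n-m$ positions, so there are precisely $m!\,(n-m)!$ of them. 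This cancels the denominator, leaving
\[
\sum_{m=0}^{n}\ \sum_{A\subseteq E,\ |A|=m}(x-1)^{r-r(A)}(y-1)^{m-r(A)}=\sum_{A\subseteq E}(x-1)^{r-r(A)}(y-1)^{|A|-r(A)},
\]
which is $T(M;x,y)$ by definition.

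The whole argument is bookkeeping, so I do not expect a genuine obstacle; the only two points needing (routine) verification are the telescoping identity $\mathrm{wt}(r_1\cdots r_m)=r(A_\pi^m)$ and the count $m!\,(n-m)!$ of permutations with a prescribed initial $m$-element set. The reason the proof is this short is precisely that replacing Derksen's quasisymmetric function attached to $\underline{r}$ by the bare symbol $[\underline{r}]$ removes the quasisymmetric-function machinery, so that the Tutte specialization reduces to a direct manipulation of sums over permutations.
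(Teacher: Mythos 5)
Your proposal is correct, and it is worth noting that the paper itself gives no proof of this statement at all: Theorem \ref{GtoT} is simply cited as Derksen's result, restated with the bare symbols $[\underline{r}]$ in place of his quasisymmetric functions. Derksen's original argument lives inside the quasisymmetric-function framework (principal specializations of the functions attached to rank sequences), whereas you give a direct, self-contained double count: the telescoping identity $\mathrm{wt}(r_1\cdots r_m)=r(\{\pi(1),\ldots,\pi(m)\})$, an interchange of the sums over $\pi$ and $m$, and the observation that exactly $m!\,(n-m)!$ permutations have a prescribed initial $m$-element set, which cancels the denominator and leaves the corank--nullity expansion of $T(M;x,y)$. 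Both verification points you flag are indeed routine ($r(\emptyset)=0$ handles the base of the telescope, and the $m=0$, $m=n$ terms correspond to $A=\emptyset$ and $A=E$), so your argument is complete; what it buys is an elementary proof of the specialization entirely within the symbol formalism the paper adopts, which is arguably the natural companion to the paper's decision to strip away the quasisymmetric-function machinery.
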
  

In this paper, we study the $\mathcal{G}$-invariant from a
combinatorial point of view.  After some order-theoretic preliminaries
in Section \ref{sec:PO}, we begin in Section \ref{sec:catenarydata} by
determining the exact combinatorial information contained in the
$\mathcal{G}$-invariant.  This information is encoded in the catenary
data, a vector or array of integers that records the number of flags
or maximal chains of flats with given sizes.  The
$\mathcal{G}$-invariant and catenary data contain the same
information; indeed, the catenary data are the coefficients of the
$\mathcal{G}$-invariant when expanded in a new basis of
$\mathcal{G}(n,r)$ called the $\gamma$-basis.  In Section
\ref{sec:constructions}, we study how the $\mathcal{G}$-invariant and
catenary data behave under matroid constructions.  Many of the known
results saying that the Tutte polynomial of a construction can be
calculated from the Tutte polynomials of its constituents have
counterparts for the $\mathcal{G}$-invariant.  We show in Section
\ref{sec:parameters} that many parameters of a matroid not derivable
from the Tutte polynomial can be derived from the
$\mathcal{G}$-invariant.  In addition, we show, in Section
\ref{sec:recon}, that the $\mathcal{G}$-invariant can be reconstructed
from several decks, giving analogs of reconstructibility results for
the Tutte polynomial.  In Section \ref{sec:configurations}, extending
a result of Eberhardt \cite{jens} for the Tutte polynomial, we show
that the $\mathcal{G}$-invariant is determined by the isomorphism type
of the lattice of cyclic flats, along with the size and rank of the
cyclic flat that corresponds to each element in the lattice.  In the
final section, we show that, except for free extensions and
coextensions, whether a matroid is a free product can be detected from
its $\mathcal{G}$-invariant.

Although we focus on matroids, we remark that the
$\mathcal{G}$-invariant of matroids constructed from graphs, such as
cycle or bicircular matroids, should have many applications in graph
theory.  For example, whether a graph has a Hamiltonian cycle is not
deducible from the Tutte polynomial of the graph but is deducible from
the $\mathcal{G}$-invariant of its cycle matroid (Corollary
\ref{cocircuits}).

\section{Partial orders on sequences and compositions}\label{sec:PO}

We use the notation $0^a$ for a sequence of $a$ $0$'s and $1^b$ for a
sequence of $b$ $1$'s.  An \emph{$(n,r)$-composition} is a
length-$(r+1)$ integer sequence $(a_0, a_1, a_2,\ldots, a_r)$
satisfying the inequalities $a_0 \geq 0$ and $a_j > 0$ for
$1 \leq j \leq r$, together with the equality
\[
a_0 + a_1 + a_2 + \cdots + a_r = n.
\]
The correspondence 
\[
0^{a_0}10^{a_1 - 1}10^{a_2 - 1} \ldots 10^{a_r - 1}
\,\,\longleftrightarrow\,\, (a_0, a_1, a_2,\ldots, a_r)
\]
gives a bijection between the set of $(n,r)$-sequences and the set of
$(n,r)$-compositions.  In this paper, we identify an $(n,r)$-sequence
and its corresponding $(n,r)$-composition.

We also need a partial order on $(n,r)$-sequences.  If $\underline{s}$
and $\underline{t}$ are $(n,r)$-sequences, then $\underline{t}
\trianglerighteq \underline{s}$ if
\begin{equation*}\label{ineq:st}
  t_1 + t_2 + \cdots + t_j  \geq  s_1 + s_2 + \cdots + s_j 
\end{equation*}
for every index $j$; in other words, reading from the left, there are
always at least as many $1$'s in $\underline{t}$ as there are in
$\underline{s}$. This order has maximum $1^r0^{n-r}$ and minimum
$0^{n-r}1^r$.  Under the bijection, $\trianglerighteq$ defines a
partial order on $(n,r)$-compositions given by $(b_0,b_1,\ldots,b_r)
\trianglerighteq (a_0,a_1,\ldots,a_r)$ if and only if for every index
$j$,
\begin{equation*}  
  b_0 + b_1 + \cdots + b_j \leq a_0 + a_1 +  \cdots + a_j.
\end{equation*}
This order is a suborder of the reversed dominance order on
compositions.  While this link plays no role in what we do, we note
that the partial order $\trianglerighteq$ on $(n,r)$-sequences is a
distributive lattice that is isomorphic to a sublattice of Young's
partition lattice (see \cite[p.~288]{EC2}).

\section{Catenary data}\label{sec:catenarydata}

Let $M$ be a rank-$r$ matroid on a set $E$ of size $n$. A \emph{flag}
$(X_i) = (X_0,X_1,\ldots,X_r)$ is a maximal chain
\[ 
\cl (\emptyset) = X_0 \subset X_1 \subset X_2 \subset \cdots \subset
X_{r-1} \subset X_r = E,
\]
where $X_j$ is a rank-$j$ flat of $M$.  The \emph{composition}
$\mathrm{comp}((X_i))$ of the flag $(X_i)$ is the $(n,r)$-composition
$(a_0, a_1, a_2,\ldots, a_r)$, where $a_0 = |X_0|$, and for positive
$j$,
\[
 a_j = |X_j  -  X_{j-1}|.  
\]
Thus, $a_0$ is the number of loops in $M$.  Also, the rank sequence
$0^{a_0}1 0^{a_1-1} \ldots 10^{a_r-1}$ corresponding to
$\mathrm{comp}((X_i))$ is the rank sequence of each of the $a_0!a_1!
\cdots a_r!$ permutations that, for all positive $j$, puts the
elements of $X_{j-1}$ before those of the set-difference
$X_j-X_{j-1}$.

We use the notation $(t)_k$ for the falling factorial
$t(t-1)(t-2)\cdots(t-k+1)$, the number of sequences of $k$ distinct
objects chosen from a set of $t$ objects.  For an $(n,r)$-sequence
$\underline{s}$ with composition $(a_0,a_1,\ldots,a_r)$, let
$\gamma(a_0,a_1,\ldots,a_r)$, and its coefficients
$c_{(a_0,a_1,\ldots,a_r)}(b_0,b_1,\ldots,b_r)$ be defined by
\begin{eqnarray*}
  && \gamma (a_0,a_1, \ldots,a_r) =  \sum_{(b_0,b_1,\ldots,b_r)}
  c_{(a_0,a_1,\ldots,a_r)}(b_0,b_1,\ldots,b_r) [0^{b_0}10^{b_1-1}
  \ldots 10^{b_r-1}] \\
  &&  =  \sum_{(b_j) \trianglerighteq (a_j)} (a_0)_{b_0}
  \Biggl(\prod_{j=1}^r a_j   \biggl(a_j-1+ \Bigl(\sum_{i=0}^{j-1}
  a_i-b_i\Bigr)
  \biggr)_{b_j-1} \Biggr)\,[0^{b_0}10^{b_1-1} \ldots 10^{b_r-1}],
\end{eqnarray*}
where the second sum ranges over all compositions with
$(b_0,b_1,\ldots,b_r) \trianglerighteq (a_0,a_1, \ldots,a_r)$.  For
example,
\begin{eqnarray*}
  && \gamma(0,1,1,4) = 24\,[111000], \quad 
  \\
  && \gamma(0,1,2,3) = 36\,[111000]+12\, [110100],  
  \\
  && \gamma(0,1,3,2) = 36\,[111000]+  24\,[110100] + 12\, [110010] ,  
  \\ 
  && \gamma(0,1,4,1) = 24\,[111000] + 24\,[110100] + 24\,[110010] 
  +24\,[110001].  
\end{eqnarray*}
Note that, by definition, the coefficient $c_{(a_0,a_1,
  \ldots,a_r)}(b_0,b_1,\ldots,b_r)$ is non-zero if and only if
$(b_0,b_1,\ldots,b_r) \trianglerighteq (a_0,a_1, \ldots,a_r)$. Hence
the matrix $(c_{(a_0,a_1, \ldots,a_r)}(b_0,b_1,\ldots,b_r))$ is a
lower triangular matrix.  In particular, the linear combinations
$\gamma(a_0,a_1, \ldots,a_r)$, where $(a_0,a_1, \ldots,a_r)$ ranges
over all $(n,r)$-compositions, form a basis, called the
\emph{$\gamma$-basis}, for the vector space $\mathcal{G}(n,r)$.

With each permutation $\pi$, we get an ordered basis
$(\pi(i_1),\pi(i_2), \ldots, \pi(i_r))$ from the terms in its rank
sequence $r_1r_2\ldots r_n$ with $r_{i_j} = 1$, where $i_1 < i_2 <
\cdots < i_r$.  This basis determines a flag, denoted
$\mathrm{flag}(\pi)$, obtained by setting $X_0 = \cl (\emptyset)$ and
\[
X_j = \cl (\{\pi(i_1),\pi(i_2), \ldots, \pi(i_j)\}).
\]   

\begin{lemma}\label{gammabasis} Let $(X_i)$ be a flag of $M$
  with composition $(a_0,a_1,\ldots,a_r)$. Then
  \[
  \sum_{\pi\,:\,\mathrm{flag}(\pi) = (X_i)} [\underline{r}(\pi)] =
  \gamma (a_0,a_1,\ldots,a_r).
  \]
  In particular, the sum on the left hand side depends only on
  $\mathrm{comp}((X_i))$.
\end{lemma}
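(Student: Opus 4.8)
The plan is to fix the flag $(X_i)$ and partition the permutations $\pi$ with $\mathrm{flag}(\pi)=(X_i)$ according to the rank sequence $\underline{r}(\pi)$: for each $(n,r)$-composition $(b_0,b_1,\ldots,b_r)$ I would count the permutations $\pi$ with $\mathrm{flag}(\pi)=(X_i)$ whose rank sequence is $0^{b_0}10^{b_1-1}\cdots10^{b_r-1}$, and show that this number equals the coefficient $c_{(a_0,\ldots,a_r)}(b_0,\ldots,b_r)$ from the definition of $\gamma$. Granting that, the left-hand side of the lemma equals $\sum_{(b_0,\ldots,b_r)}c_{(a_0,\ldots,a_r)}(b_0,\ldots,b_r)\,[0^{b_0}10^{b_1-1}\cdots10^{b_r-1}]=\gamma(a_0,\ldots,a_r)$, and since the latter depends only on $(a_0,\ldots,a_r)=\mathrm{comp}((X_i))$, the final assertion is immediate.

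First I would set up the characterization that drives the count. If $\underline{r}(\pi)=0^{b_0}10^{b_1-1}\cdots10^{b_r-1}$, then the rank jumps exactly at the positions $i_j:=b_0+b_1+\cdots+b_{j-1}+1$, and $\{\pi(i_1),\ldots,\pi(i_j)\}$ is independent of size $j$. I claim that, given this rank-sequence condition, $\mathrm{flag}(\pi)=(X_i)$ is equivalent to the membership conditions: $\pi(k)\in X_0$ for $1\le k<i_1$; $\pi(i_j)\in X_j\setminus X_{j-1}$ for $1\le j\le r$; and $\pi(k)\in X_j$ whenever $i_j<k<i_{j+1}$. For the forward implication, $\{\pi(i_1),\ldots,\pi(i_j)\}$ and $\{\pi(1),\ldots,\pi(i_j)\}$ have the same closure, both having rank $j$, and this common closure must be $X_j$; reading this off for each $j$ gives the conditions. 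For the converse, an induction on the slot index shows that the membership conditions force the rank profile and the equalities $\cl(\{\pi(i_1),\ldots,\pi(i_j)\})=X_j$. I expect this equivalence, in particular checking that the closures land exactly on the $X_j$ rather than on some other flats of the same ranks, to be the one point needing care; the rest is bookkeeping.

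With the characterization in hand, the count is a sequential choice over the slots $1,2,\ldots,n$, using throughout that $|X_j|=a_0+a_1+\cdots+a_j$. The $b_0$ slots before $i_1$ take distinct loops, in $(a_0)_{b_0}$ ways. For each $j$ with $1\le j\le r$, slot $i_j$ takes one of the $a_j$ elements of $X_j\setminus X_{j-1}$ (all still available, since the $b_0+\cdots+b_{j-1}$ elements placed so far lie in $X_{j-1}$), and each of the $b_j-1$ slots strictly between $i_j$ and $i_{j+1}$ takes an unused element of $X_j$, for which the count of available elements starts at $|X_j|-(b_0+\cdots+b_{j-1})-1=a_j-1+\sum_{i=0}^{j-1}(a_i-b_i)$ and decreases by one at each step; these slots thus contribute $\bigl(a_j-1+\sum_{i=0}^{j-1}(a_i-b_i)\bigr)_{b_j-1}$. (When some prefix sum $b_0+\cdots+b_j$ exceeds $|X_j|$ this falling factorial vanishes, matching both the nonexistence of such $\pi$ and the fact that $c_{(a_0,\ldots,a_r)}(b_0,\ldots,b_r)=0$ unless $(b_j)\trianglerighteq(a_j)$.) Multiplying these contributions yields exactly $(a_0)_{b_0}\prod_{j=1}^{r}a_j\bigl(a_j-1+\sum_{i=0}^{j-1}(a_i-b_i)\bigr)_{b_j-1}=c_{(a_0,\ldots,a_r)}(b_0,\ldots,b_r)$, as needed.
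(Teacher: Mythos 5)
Your proposal is correct and follows essentially the same route as the paper: characterize the permutations $\pi$ with $\mathrm{flag}(\pi)=(X_i)$ and rank sequence $0^{b_0}10^{b_1-1}\cdots 10^{b_r-1}$ by the slot-membership conditions, then count them sequentially to get $(a_0)_{b_0}\prod_{j=1}^{r}a_j\bigl(a_j-1+\sum_{i=0}^{j-1}(a_i-b_i)\bigr)_{b_j-1}=c_{(a_0,\ldots,a_r)}(b_0,\ldots,b_r)$. The only difference is that you spell out the equivalence (closures landing on the $X_j$) and the vanishing of the falling factorials when $(b_j)\not\trianglerighteq(a_j)$ a bit more explicitly than the paper does, which is fine.
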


\begin{proof} 
  Let $\pi$ be a permutation whose rank sequence $\underline{r}(\pi)$
  equals $0^{b_0} 10^{b_1-1}10^{b_2 -1} \ldots 10^{b_r-1}$. Then, by
  the Mac\,Lane-Steinitz exchange property for closure,
  $\mathrm{flag}(\pi) = (X_i)$ if and only if
 \begin{eqnarray*}
   &&  \pi(1),\pi(2), \ldots, \pi(b_0) \in X_0, \,\,\,   \\ 
   &&  \pi(b_0+1) \in  X_1 - X_0, \,\, 
   \\
   &&     \pi(b_0 + 2),  \ldots, \pi(b_0+ b_1) \in X_1,   \\ 
   &&     \quad  \vdots     \\
   &&   \pi(b_0+b_1+ \cdots + b_{j-1} + 1) \in X_j - X_{j-1},  \,\,
   \\
   && \pi(b_0 + b_1 + \cdots + b_{j-1} + 2), \ldots,
   \pi(b_0+b_1+ \cdots + b_{j-1} + b_j) \in X_j,     \\ 
   &&\quad  \vdots       \\ 
   &&   \pi(b_0+b_1+ \cdots + b_{r-1} + 1) \in X_r - X_{r-1}, \,\,  
   \\
   &&  \pi(b_0+b_1+ \cdots + b_{r-1} + 2),
   \ldots, \pi(n) \in X_r.   
  \end{eqnarray*}  
  Such a permutation $\pi$ exists if and only if
  $(b_0, b_1, \ldots, b_r) \trianglerighteq (a_0,a_1,\ldots,a_r)$ and
  all such permutations can be found by choosing a length-$b_0$
  sequence of distinct elements from $X_0$ (a set of size $a_0$), an
  element from $X_1 - X_0$ (a set of size $a_1$), a length-$(b_1 - 1)$
  sequence of distinct elements from the subset of elements in
  $X_1 \cup X_0$ not chosen earlier (a set of size
  $(a_1 - 1) + (a_0 - b_0)$), an element from $X_2 - X_1$ (a set of
  size $a_2$), a length-$(b_2-1)$ sequence of distinct elements from
  the subset of elements in $X_2 \cup X_1 \cup X_0$ not chosen earlier
  (a set of size $(a_2 - 1) + (a_1 + a_0) - (b_1 + b_0)$), and so on.
  Thus there are $c_{(a_0,a_1,\ldots,a_r)} (b_0,b_1, \ldots,b_r)$ such
  permutations $\pi$.
\end{proof}  

The ideas in the proof of Lemma \ref{gammabasis} give the next result.

\begin{lemma}\label{lem:countbybases}
  For a flag $(X_i)$ of $M$ with composition $(a_0,a_1,\ldots,a_r)$,
  there are $a_1a_2\cdots a_r$ ordered bases $(e_1,e_2,\ldots,e_r)$ of
  $M$ with $\cl(\{e_1,e_2,\ldots,e_i\})=X_i$ for $1\leq i\leq r$.
\end{lemma}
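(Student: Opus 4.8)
The plan is to reduce the statement to an elementary product count by characterizing the relevant ordered bases one element at a time. First I would show that an ordered basis $(e_1,e_2,\ldots,e_r)$ of $M$ satisfies $\cl(\{e_1,e_2,\ldots,e_i\})=X_i$ for all $i$ with $1\leq i\leq r$ if and only if $e_i\in X_i-X_{i-1}$ for all such $i$. The forward implication is immediate: if $\cl(\{e_1,\ldots,e_i\})=X_i$ and $\cl(\{e_1,\ldots,e_{i-1}\})=X_{i-1}$, then $e_i\in X_i$, and $e_i\notin X_{i-1}$, since otherwise $\cl(\{e_1,\ldots,e_i\})\subseteq X_{i-1}$, contradicting $r(X_i)=i>i-1=r(X_{i-1})$.

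For the converse I would induct on $i$. Since $e_1\in X_1-X_0=X_1-\cl(\emptyset)$, the element $e_1$ is not a loop, so $\cl(\{e_1\})$ is the unique rank-$1$ flat containing $e_1$; as $X_1$ is a rank-$1$ flat containing $e_1$, we get $\cl(\{e_1\})=X_1$. For the inductive step, assuming $\cl(\{e_1,\ldots,e_{i-1}\})=X_{i-1}$, we have $\cl(\{e_1,\ldots,e_i\})=\cl(X_{i-1}\cup\{e_i\})$. Because $e_i\notin X_{i-1}=\cl(X_{i-1})$, adjoining $e_i$ raises the rank, so $r(\cl(X_{i-1}\cup\{e_i\}))=i$; and since $X_{i-1}\subseteq X_i$, $e_i\in X_i$, and $X_i$ is a flat, we have $\cl(X_{i-1}\cup\{e_i\})\subseteq X_i$. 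Two flats of the same rank, one contained in the other, coincide, so $\cl(\{e_1,\ldots,e_i\})=X_i$. In particular $\cl(\{e_1,\ldots,e_r\})=X_r=E$, so any sequence $(e_1,\ldots,e_r)$ with $e_i\in X_i-X_{i-1}$ for all $i$ is automatically an ordered basis.

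With the characterization established, the count is simply the number of ways to choose $e_i\in X_i-X_{i-1}$ independently for each $i$, namely $\prod_{i=1}^r|X_i-X_{i-1}|=\prod_{i=1}^r a_i=a_1a_2\cdots a_r$. There is no serious obstacle here; the only point needing care is verifying that the membership conditions $e_i\in X_i-X_{i-1}$ are not merely necessary but sufficient to force the closures to be exactly the $X_i$, which is handled by the induction above. (Alternatively, one can read this off directly from the bookkeeping in the proof of Lemma \ref{gammabasis} by taking $b_j=a_j$ for all $j$: then each length-$(b_j-1)$ sequence of distinct elements is empty, leaving only the single choice of an element from $X_j-X_{j-1}$ for each positive $j$, and hence the product $a_1a_2\cdots a_r$.)
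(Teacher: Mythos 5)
Your main argument is correct and is essentially the approach the paper intends: Lemma \ref{lem:countbybases} is stated as following from the ideas in the proof of Lemma \ref{gammabasis}, namely that a flag is produced exactly by picking, for each $i\geq 1$, one element of $X_i-X_{i-1}$ at the rank jump, which is precisely your characterization $e_i\in X_i-X_{i-1}$ together with the product count $a_1a_2\cdots a_r$. One small quibble with your parenthetical alternative: setting $b_j=a_j$ does not make the length-$(b_j-1)$ sequences empty (that would require $b_j=1$ for $j\geq 1$, which is not an $(n,r)$-composition unless $n=r$), but this aside is dispensable since your inductive proof is self-contained.
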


For an $(n,r)$-composition $(a_0,a_1,\ldots,a_r)$, let
$\nu(M;a_0,a_1,\ldots,a_r)$ be the number of flags $(X_i)$ in $M$ with
composition $(a_0,a_1, \ldots,a_r)$.  The \emph{catenary data} of $M$
is the $\binom {n}{r}$-dimensional vector
$(\nu(M;a_0,a_1,\ldots,a_r))$ indexed by $(n,r)$-compositions.  When
giving the catenary data of a specific matroid, we usually give only
the coordinates that might be non-zero; when a coordinate is not
given, it is zero.

\begin{thm}\label{thm:catdata}
  The $\mathcal{G}$-invariant of $M$ is determined by its catenary
  data and conversely.  In particular,
  \begin{equation}\label{gamma1}
    \mathcal{G}(M)  =  \sum_{(a_0,a_1,\ldots,a_r)}
    \nu(M;a_0,a_1,\ldots,a_r) \, \gamma(a_0,a_1,\ldots,a_r). 
  \end{equation} 
\end{thm}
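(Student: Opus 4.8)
The plan is to prove the formula \eqref{gamma1} directly by partitioning the defining sum for $\mathcal{G}(M)$ according to the flag each permutation determines, and then to deduce the two-way equivalence from the fact that the $\gamma$'s form a basis. Concretely, recall from the definition that
\[
\mathcal{G}(M) = \sum_{\pi} [\underline{r}(\pi)],
\]
the sum over all $n!$ permutations $\pi$ of $\{1,2,\ldots,n\}$. Every permutation $\pi$ has a well-defined flag $\mathrm{flag}(\pi)$, so the set of permutations is partitioned into the fibers $\{\pi : \mathrm{flag}(\pi) = (X_i)\}$ as $(X_i)$ ranges over all flags of $M$. Hence
\[
\mathcal{G}(M) = \sum_{(X_i)} \ \sum_{\pi\,:\,\mathrm{flag}(\pi) = (X_i)} [\underline{r}(\pi)].
\]
By Lemma \ref{gammabasis}, the inner sum equals $\gamma(\mathrm{comp}((X_i)))$, which depends only on the composition of the flag. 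Grouping the flags by their common composition $(a_0,a_1,\ldots,a_r)$ and recalling that $\nu(M;a_0,a_1,\ldots,a_r)$ counts exactly the flags with that composition, we obtain
\[
\mathcal{G}(M) = \sum_{(a_0,a_1,\ldots,a_r)} \nu(M;a_0,a_1,\ldots,a_r)\,\gamma(a_0,a_1,\ldots,a_r),
\]
which is \eqref{gamma1}.

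For the ``and conversely'' part, I would invoke the observation already made in the text: because $c_{(a_0,\ldots,a_r)}(b_0,\ldots,b_r)$ is nonzero precisely when $(b_0,\ldots,b_r)\trianglerighteq(a_0,\ldots,a_r)$, the change-of-basis matrix from the symbols $[0^{b_0}10^{b_1-1}\cdots10^{b_r-1}]$ to the $\gamma(a_0,\ldots,a_r)$ is triangular with respect to $\trianglerighteq$, and its diagonal entries $c_{(a_0,\ldots,a_r)}(a_0,\ldots,a_r) = (a_0)_{a_0}\prod_{j=1}^r a_j(a_j-1)_{a_j-1} = a_0!a_1!\cdots a_r!$ are nonzero. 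Thus $\{\gamma(a_0,a_1,\ldots,a_r)\}$ is a basis for $\mathcal{G}(n,r)$, so the expansion \eqref{gamma1} shows that the coefficients $\nu(M;a_0,a_1,\ldots,a_r)$ are uniquely determined by $\mathcal{G}(M)$, and of course $\mathcal{G}(M)$ is determined by the catenary data via \eqref{gamma1}. This establishes the two-way equivalence.

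There is no serious obstacle here: the theorem is essentially a bookkeeping consequence of Lemma \ref{gammabasis} together with the triangularity already noted before the statement. The one point that deserves a sentence of care is the justification that the permutation fibers really do partition all of $\{1,2,\ldots,n\}!$ — i.e. that every permutation $\pi$ yields a genuine flag of $M$ (so that no permutations are left out of the double sum). This follows because the positions of the $1$'s in $\underline{r}(\pi)$ pick out an ordered basis of $M$, whose successive closures form a flag; this is exactly the construction of $\mathrm{flag}(\pi)$ recalled just before Lemma \ref{gammabasis}. I would state this explicitly at the start of the proof so that the regrouping of the sum is unambiguous, and then the rest is immediate.
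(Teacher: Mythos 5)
Your proof is correct and follows essentially the same route as the paper: partition the permutations by their flags, apply Lemma \ref{gammabasis}, and then group flags by composition, with the converse following from the triangularity of the change-of-basis matrix (which the paper records just before the theorem when introducing the $\gamma$-basis). Your extra remarks — the explicit diagonal entries $a_0!a_1!\cdots a_r!$ and the note that every permutation does determine a flag — are accurate but not needed beyond what the paper already states.
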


\begin{proof}
  Partitioning the permutations $\pi$ on $E$ according to their flags,
  using Lemma~\ref{gammabasis}, and then partitioning the flags
  according to their compositions, we have
  \begin{align*}
    \mathcal{G}(M)  = &\, \sum_{(X_i)} \Biggl(
      \sum_{\mathrm{flag}(\pi) = (X_i)}
      [\underline{r}(\pi)]\Biggr)  \\
    =&\, \sum_{(X_i)} \gamma (|X_0|, |X_1 - X_0|, \ldots, |X_r -
    X_{r-1}|) \\   
     = &\, \sum_{(a_0,a_1,\ldots,a_r)}
    \nu(M;a_0,a_1,\ldots,a_r) \, \gamma(a_0,a_1,\ldots,a_r). \qedhere
  \end{align*}
\end{proof}

Since equation (\ref{gamma1}) is obtained by partitioning terms in a
sum (with no cancellation), writing the $\mathcal{G}$-invariant in the
symbol basis requires at least as many terms as writing it in the
$\gamma$-basis.  The next example shows that the $\gamma$-basis may
require far fewer terms than the symbol basis.

\begin{ex}
  A perfect matroid design (Young and Edmonds \cite{pmd}; see also
  Welsh \cite[Section 12.5]{Welsh}) is a matroid in which flats of the
  same rank have the same size.  Thus, for such a matroid $M$, there
  is a sequence $(\alpha_0,\alpha_1,\ldots, \alpha_r)$, where
  $r=r(M)$, such that $|F|=\alpha_i$ for all flats $F$ of rank $i$ in
  $M$.  For example, the rank-$r$ projective geometry $\PG(r-1,q)$
  over the finite field $\GF(q)$ is a perfect matroid design with the
  sequence of flat sizes being
  $$(0,1,q+1,q^2+q+1,\ldots,q^{r-1}+q^{r-2}+\cdots+q+1).$$ Another
  example is the rank-$r$ affine geometry $\AG(r-1,q)$ over $\GF(q)$,
  with its sequence being $(0,1,q,q^2,\ldots,q^{r-1})$.  If a rank-$i$
  flat $X$ is contained in exactly $t$ flats of rank $i+1$, say $X_j$,
  for $1\leq j\leq t$, then the $t$ differences $X_j-X$ partition the
  set of elements not in $X$, and $|X_j-X|= \alpha_{i+1}-\alpha_i$, so
  $t$ is $(\alpha_r-\alpha_i)/(\alpha_{i+1}-\alpha_i)$.  Therefore
  $$ \nu(M;\alpha_0,\alpha_1-\alpha_0,\alpha_2-\alpha_1,\ldots,
  \alpha_r-\alpha_{r-1}) = \prod_{i=0}^{r-1}
  \frac{\alpha_r-\alpha_i}{\alpha_{i+1}-\alpha_i},$$
  and this accounts for all flags in $M$.  (Indeed, a matroid is a
  perfect matroid design if and only if exactly one coefficient
  $\nu(M;a_0,a_1,\ldots,a_r)$ is nonzero.)  Thus,
  \begin{equation*}
    \mathcal{G}(M) = \left(\prod_{i=0}^{r-1} 
      \frac{\alpha_r-\alpha_i}{\alpha_{i+1}-\alpha_i}\right)
    \gamma(\alpha_0,\alpha_1-\alpha_0,\alpha_2-\alpha_1,
    \ldots,\alpha_r-\alpha_{r-1}). 
  \end{equation*}
  In particular,
  \[
  \mathcal{G}\bigl(\PG(r-1,q)\bigr) = \left(\prod_{i=0}^{r-1} \frac
    {q^{r-i}- 1}{q-1}\right) \gamma(0,1,q,q^2,\ldots,q^{r-1}),
  \]
  and
  $$\mathcal{G}\bigl(\AG(r-1,q)\bigr) = q^{r-1}\left(\prod_{i=1}^{r-1} \frac
    {q^{r-i}- 1}{q-1}\right)
  \gamma(0,1,q-1,q^2-q,\ldots,q^{r-1}-q^{r-2}).$$
\end{ex}

\begin{ex}\label{MN} 
  The matroids $M$ and $N$ in Figure \ref{fig:sameconfig}, the two
  smallest matroids that have the same Tutte polynomial \cite{decomp},
  also have the same catenary data, namely,
  \[
  \nu(\,\cdot\,;0,1,2,3) = 6, \qquad \nu(\,\cdot\,;0,1,1,4) = 18.
  \]
  This agrees with Derksen's calculation in \cite{Derksen} that their
  $\mathcal{G}$-invariant equals
  \[
  72[110100] + 648[111000].
  \] 
\end{ex}

\begin{figure}
  \centering
  \begin{tikzpicture}[scale=1]
  \filldraw (0,1) node {} circle  (2pt);
  \filldraw (1,1) node {} circle  (2pt);
  \filldraw (2,1) node {} circle  (2pt);
  \filldraw (0,0) node {} circle  (2pt);
  \filldraw (1,0) node {} circle  (2pt);
  \filldraw (2,0) node {} circle  (2pt);
  \draw[thick](0,0)--(2,0);
  \draw[thick](0,1)--(2,1);
  \node at (1,-0.75) {$M$};
  \filldraw (3,0.5) node {} circle  (2pt);
  \filldraw (4,0.75) node {} circle  (2pt);
  \filldraw (5,1) node {} circle  (2pt);
  \filldraw (4,0.25) node {} circle  (2pt);
  \filldraw (5,0) node {} circle  (2pt);
  \filldraw (5.5,0.5) node[right] {$x$} circle  (2pt);
  \draw[thick](3,0.5)--(5,0);
  \draw[thick](3,0.5)--(5,1);
  \node at (4,-0.75) {$N$};
  \end{tikzpicture}
  \caption{The smallest pair of matroids with the same
    $\mathcal{G}$-invariant and catenary data. }
  \label{fig:sameconfig}
\end{figure}
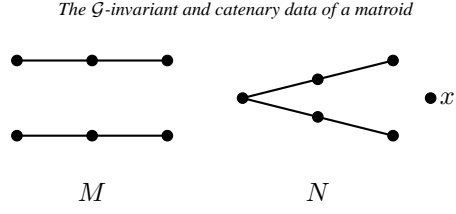

The following proposition (whose proof is immediate) gives a recursion
on restrictions to copoints (that is, rank-$(r-1)$ flats) for catenary
data.

\begin{prop}\label{catcurse}  
  Let $(a_0,a_1,\ldots,a_r)$ be an $(n,r)$-composition.  Then
  \[
  \nu(M;a_0,a_1, \ldots, a_{r-1},a_r) = \sum_{X
    \,\mathrm{a}\,\mathrm{copoint}, \, |E - X| = a_r} \nu (M|X;
  a_0,a_1,\ldots, a_{r-1}).
  \]
\end{prop}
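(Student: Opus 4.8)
The plan is to exhibit a bijection between the flags of $M$ enumerated by $\nu(M;a_0,a_1,\ldots,a_{r-1},a_r)$ and the pairs $(X,(X_i))$ in which $X$ is a copoint of $M$ with $|E-X|=a_r$ and $(X_i)$ is a flag of $M|X$ enumerated by $\nu(M|X;a_0,a_1,\ldots,a_{r-1})$; summing the count over all such copoints $X$ then gives the displayed identity.

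First I would note that a flag $(X_0,X_1,\ldots,X_{r-1},X_r)$ of $M$ with composition $(a_0,a_1,\ldots,a_{r-1},a_r)$ has $X_r=E$ and $X_{r-1}$ a rank-$(r-1)$ flat, that is, a copoint, with $|E-X_{r-1}|=|X_r-X_{r-1}|=a_r$; delete the top term to obtain the chain $(X_0,X_1,\ldots,X_{r-1})$. The one point needing a word of justification is standard: because $X_{r-1}$ is a flat of $M$, the flats of the restriction $M|X_{r-1}$ are exactly the flats of $M$ contained in $X_{r-1}$, each retaining its $M$-rank, and $r(M|X_{r-1})=r-1$. Hence $(X_0,\ldots,X_{r-1})$ is a genuine flag of $M|X_{r-1}$, and its composition is $(|X_0|,|X_1-X_0|,\ldots,|X_{r-1}-X_{r-2}|)=(a_0,a_1,\ldots,a_{r-1})$, since the loops of $M|X_{r-1}$ coincide with those of $M$ (as $\cl(\emptyset)\subseteq X_{r-1}$) and all the intermediate differences are unchanged.

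Conversely, given a copoint $X$ with $|E-X|=a_r$ and a flag $(X_0,\ldots,X_{r-1})$ of $M|X$ with composition $(a_0,\ldots,a_{r-1})$ — so that necessarily $X_{r-1}=X$ — appending $X_r=E$ produces a flag of $M$ whose composition is $(a_0,\ldots,a_{r-1},a_r)$, because $|E-X_{r-1}|=a_r$. These two assignments are mutually inverse, which establishes the claimed bijection and hence the proposition. There is no substantive obstacle here; the only thing to keep straight is the elementary description of the flats and ranks of the restriction of $M$ to one of its flats.
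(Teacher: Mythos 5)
Your proof is correct and is exactly the argument the paper has in mind: the paper declares Proposition \ref{catcurse} immediate, and the implicit reasoning is precisely your bijection between flags of $M$ with composition $(a_0,\ldots,a_r)$ and pairs consisting of a copoint $X$ with $|E-X|=a_r$ together with a flag of $M|X$ with composition $(a_0,\ldots,a_{r-1})$. Spelling out that the flats of $M|X_{r-1}$ are the flats of $M$ contained in $X_{r-1}$ is the right (and only) detail to check, so nothing further is needed.
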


Proposition \ref{catcurse} allows us to compute the catenary data of
paving matroids (see also \cite{FalkKung}).  Recall that a rank-$r$
matroid $M$ is \emph{paving} if all circuits have $r$ or $r+1$
elements.  Note that a matroid $M$ is paving if and only if every
symbol $[\underline{r}]$ that occurs with non-zero coefficient in its
$\mathcal{G}$-invariant starts with $r-1$ $1$'s.

\begin{thm}\label{paving} 
  Let $M$ be a rank-$r$ paving matroid on $\{1,2,\ldots,n\}$ with
  $f_{r-1}(m)$ copoints with $m$ elements.  Then $M$ has catenary
  data:
  \[
\nu(M;0,\underbrace{1,1,,\ldots, 1}_{r-2},m-r+2,n-m) = f_{r-1} (m)
  \cdot (m)_{r-2}.  
  \]
\end{thm}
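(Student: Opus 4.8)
The plan is to exploit the very restricted flat structure of a paving matroid to determine which compositions can occur, and then to evaluate the resulting family of nonzero coordinates using Proposition~\ref{catcurse}. Throughout we may assume $r\geq 2$, since otherwise the displayed composition is not defined.

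\textbf{Step 1 (flat structure).} In a rank-$r$ paving matroid with $r\geq 2$ there are no loops, since a loop would be a circuit of size $1<r$; hence $\cl(\emptyset)=\emptyset$ and $X_0=\emptyset$ in every flag. Moreover, any flat $F$ with $r(F)=k\leq r-2$ satisfies $|F|=k$: if not, $F$ contains a circuit $C$, and $C\subseteq F$ gives $|C|=r(C)+1\leq k+1\leq r-1$, contradicting that all circuits have size $r$ or $r+1$. Therefore, in any flag $(X_0,X_1,\ldots,X_r)$ of $M$ the set $X_i$ has exactly $i$ elements for $0\leq i\leq r-2$, so $a_i=1$ for $1\leq i\leq r-2$; writing $m=|X_{r-1}|$ for the size of the copoint $X_{r-1}$, we get $a_{r-1}=m-(r-2)=m-r+2$ and $a_r=n-m$. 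Thus the composition of every flag has the form $(0,1,\ldots,1,m-r+2,n-m)$, and all coordinates of the catenary data not of this shape vanish.

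\textbf{Step 2 (counting).} Apply Proposition~\ref{catcurse}:
\[
\nu(M;0,1,\ldots,1,m-r+2,n-m)=\sum_{X\ \mathrm{a\ copoint},\ |E-X|=n-m}\nu(M|X;0,1,\ldots,1,m-r+2),
\]
a sum with $f_{r-1}(m)$ terms. For a copoint $X$ with $|X|=m$, the restriction $M|X$ has rank $r-1$, and its circuits are exactly the circuits of $M$ contained in $X$; such a circuit $C$ satisfies $|C|=r(C)+1\leq r(X)+1=r$ and, by pavingness, $|C|\geq r$, so $|C|=r$. Hence $M|X$ is the uniform matroid $U_{r-1,m}$. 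The proper flats of $U_{r-1,m}$ are precisely its subsets of size at most $r-2$, so a flag of $U_{r-1,m}$ is a chain $\emptyset\subset X_1\subset\cdots\subset X_{r-2}\subset X$ with $|X_i|=i$, and the number of such chains is the number of ways to adjoin $r-2$ distinct elements of $X$ one at a time, namely $m(m-1)\cdots(m-r+3)=(m)_{r-2}$. Summing over the $f_{r-1}(m)$ copoints of size $m$ yields the claimed formula. (One could equally bypass Proposition~\ref{catcurse} and note directly that a flag of $M$ is the same thing as a choice of a copoint $X$ together with a flag of $M|X$.)

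\textbf{Main obstacle.} There is essentially no obstacle: the entire content is the flat-structure observation in Step~1, which both forces the shape of the admissible compositions and reduces each restriction $M|X$ to a uniform matroid, after which the count is elementary. The only point requiring care is the bookkeeping in small cases --- when $r=2$ the block of $1$'s is empty, $(m)_{r-2}=(m)_0=1$, and the statement reduces to $\nu(M;0,m,n-m)=f_1(m)$, the number of rank-$1$ flats of size $m$ --- together with keeping straight the shift between a flat's size $m=|X_{r-1}|$ and the corresponding composition entry $a_{r-1}=m-r+2$.
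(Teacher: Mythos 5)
Your proof is correct and follows exactly the route the paper intends: the paper states the theorem as a direct consequence of Proposition~\ref{catcurse}, relying on the same observations you make, namely that in a paving matroid every flat of rank at most $r-2$ is independent (forcing the composition shape $(0,1,\ldots,1,m-r+2,n-m)$) and that each size-$m$ copoint restricts to $U_{r-1,m}$, contributing $(m)_{r-2}$ flags. Your handling of the $r=2$ edge case is a welcome extra detail; nothing further is needed.
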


All simple rank-$3$ matroids are paving, so we have explicit formulas
for their $\mathcal{G}$-invariants and catenary data.  This includes
the matroids in Example \ref{MN}.

\begin{ex}\label{Kfour} 
  The cycle matroid $M(K_4)$ of the complete graph $K_4$, has three
  $2$-point lines and four $3$-point lines; hence
  \[
  \nu(M(K_4);0,1,1,4) = 6, \qquad \nu(M(K_4);0,1,2,3) = 12.
  \]
  and
  \[
  \mathcal{G}(M(K_4)) = 6\,\gamma(0,1,1,4) + 12\,\gamma(0,1,2,3) =
  576\, [111000] + 144\, [110100].
  \]
\end{ex} 

We end this section with two more examples, the matroids $M_1$ and
$M_2$ in Figure \ref{fig:2exp}.  Brylawski \cite[p.~268]{decomp}
showed that they are the smallest pair of non-paving matroids with the
same Tutte polynomial, but Derksen \cite{Derksen} showed that their
$\mathcal{G}$-invariants are distinct.  The catenary data of these two
matroids are given in the table
\begin{center}
  \begin{tabular}{c|c|c|}
    \multicolumn{1}{r}{} & \multicolumn{1}{c}{$M_1$} &
    \multicolumn{1}{c}{$M_2$} \\
    \cline{2-3} $\nu(M_i;0,1,1,5)$ & $4$ & $6$  \rule{0pt}{10pt} \\ 
    \cline{2-3} $\nu(M_i;0,1,2,4)$ & $7$ & $3$  \rule{0pt}{10pt} \\ 
    \cline{2-3} $\nu(M_i;0,1,3,3)$ & $4$ & $6$  \rule{0pt}{10pt} \\ 
    \cline{2-3} $\nu(M_i;0,2,1,4)$ & $1$ & $3$  \rule{0pt}{10pt} \\ 
    \cline{2-3} $\nu(M_i;0,2,2,3)$ & $2$ & $1$  \rule{0pt}{10pt} \\ 
    \cline{2-3}
  \end{tabular}
\end{center}
and they are indeed different.  

\begin{figure}
 \begin{tikzpicture}[scale=1]
  \draw[thick](0,0.5)--(2,-0.2);
  \draw[thick](0,0.5)--(2,1.2);
  \draw[thick](1,0.15)--(2,1.2);
  \draw[thick](1,0.85)--(2,-0.2);
  \filldraw (1,0.85) circle  (2.5pt);
  \filldraw (2,1.1) circle  (2.5pt);
  \filldraw (1.95,1.26) circle  (2.5pt);
  \filldraw (2,-0.2) circle  (2.5pt);
  \filldraw (1,0.15) circle  (2.5pt);
  \filldraw (1.33,0.5) circle  (2.5pt);
  \filldraw (0,0.5) circle  (2.5pt);
  \node at (1,-0.75) {$M_1$};
  \end{tikzpicture}
\hspace{30pt}
 \begin{tikzpicture}[scale=1]
  \draw[thick](0,0.5)--(2,-0.2);
  \draw[thick](0,0.5)--(2,1.2);
  \filldraw (1,0.85) circle  (2.5pt);
  \filldraw (2,1.1) circle  (2.5pt);
  \filldraw (1.95,1.26) circle  (2.5pt);
  \filldraw (2,-0.2) circle  (2.5pt);
  \filldraw (0.7,0.251) circle  (2.5pt);
  \filldraw (1.4,0.017) circle  (2.5pt);
  \filldraw (0,0.5) circle  (2.5pt);
  \node at (1,-0.75) {$M_2$};
  \end{tikzpicture}
  \caption{The matroids $M_1$ and $M_2$.} 
  \label{fig:2exp}
\end{figure}
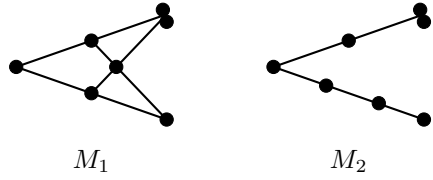

\section{Constructions on matroids}\label{sec:constructions}

In many cases, the $\mathcal{G}$-invariant of a construction is
determined by the $\mathcal{G}$-invariants of its constituents.  In
this section, we give an account of currently known results of this
type, all of which originated from results on Tutte polynomials.
Specifically, we discuss deletion, contraction, dual, truncation,
Higgs lift, direct sum, free extension, free coextension, free
product, $q$-cone, and circuit-hyperplane relaxation.

\subsection{Deletion and contraction} 
The $\mathcal{G}$-invariant does not satisfy a deletion-contraction
rule since it is strictly stronger than the Tutte polynomial, which is
universal for deletion-contraction invariants.  Still, we get a
formula involving all single-element deletions, and likewise for
contractions.  First, for two $0,1$-sequences $\underline{r}$ and
$\underline{r}'$, define the \emph{concatenation} $\diamond$ of the
corresponding symbols $[\underline{r}]$ and $[\underline{r}']$ by
$[\underline{r}] \diamond [\underline{r}'] = [\underline{r}
\underline{r}']$
and extend $\diamond$ by bilinearity.  Putting the permutations into
two groups according to whether the last element is a coloop of $M$
gives
$$\mathcal{G}(M)  = \sum\limits_{\substack{a\in E,\\ \text{not a
      coloop}}}\mathcal{G}(M\del a) \diamond[0] +
\sum\limits_{\substack{a\in E, \\ \text{a coloop} }}\mathcal{G}(M\del
a)\diamond [1].$$ Similarly,
$$\mathcal{G}(M)  = \sum\limits_{\substack{a\in E,\\ \text{not a loop}}}
[1] \diamond \mathcal{G}(M/ a) + \sum\limits_{\substack{a\in E, \\
    \text{a loop}}}[0] \diamond \mathcal{G}(M/ a).$$

\subsection{Dual} 
The next proposition, due to Derksen \cite{Derksen}, is included for
completeness.

\begin{prop}\label{prop:dual}
  The $\mathcal{G}$-invariant $\mathcal{G}(M^*)$ of the dual of $M$
  can be obtained from $\mathcal{G}(M)$ by the specialization
  \[ [r_1 r_2 \ldots r_{n-1}r_n] \mapsto [\bar{r}_n
   \bar{r}_{n-1} \ldots \bar{r}_2 \bar{r}_1],
  \]
  where $\bar{\,}$ switches $0$'s and $1$'s, that is, $\bar{0} = 1$
  and $\bar{1} = 0$.
\end{prop}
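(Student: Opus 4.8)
The plan is to work directly from the permutation definition of the $\mathcal{G}$-invariant and exploit the well-known relationship between bases of $M$ and bases of $M^*$: a set $B$ is a basis of $M$ if and only if $E \setminus B$ is a basis of $M^*$. More usefully, I would track the rank sequence itself. Given a permutation $\pi$ on $E = \{1,2,\ldots,n\}$, let $\pi'$ be the reverse permutation defined by $\pi'(j) = \pi(n+1-j)$. The claim I want to establish is that $\underline{r}(\pi')$ in the matroid $M^*$ is obtained from $\underline{r}(\pi)$ in $M$ by reversing the sequence and swapping $0$s and $1$s; that is, if $\underline{r}(\pi) = r_1 r_2 \ldots r_n$ then $\underline{r}_{M^*}(\pi') = \bar r_n \bar r_{n-1} \ldots \bar r_1$. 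Since $\pi \mapsto \pi'$ is a bijection on the set of all $n!$ permutations, summing $[\underline{r}_{M^*}(\pi')]$ over all $\pi'$ gives $\mathcal{G}(M^*)$, and the identification above shows this equals the image of $\mathcal{G}(M) = \sum_\pi [\underline{r}(\pi)]$ under the stated specialization.

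To prove the key identification, I would use the standard rank formula for the dual: for any $A \subseteq E$, $r_{M^*}(A) = |A| - r(M) + r(E \setminus A)$. Fix $\pi$ and set $A_j = \{\pi(1),\ldots,\pi(j)\}$, so that $E \setminus A_j = \{\pi(j+1),\ldots,\pi(n)\} = \{\pi'(1),\ldots,\pi'(n-j)\}$, which is exactly the initial segment of length $n-j$ for $\pi'$. The $j$-th term of $\underline{r}_{M^*}(\pi')$ is $r_{M^*}(\{\pi'(1),\ldots,\pi'(j)\}) - r_{M^*}(\{\pi'(1),\ldots,\pi'(j-1)\})$; writing $\{\pi'(1),\ldots,\pi'(j)\} = E \setminus A_{n-j}$ and applying the dual rank formula, this difference becomes $\bigl(|E \setminus A_{n-j}| - r(M) + r(A_{n-j})\bigr) - \bigl(|E \setminus A_{n-j+1}| - r(M) + r(A_{n-j+1})\bigr)$. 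The cardinality terms contribute $-1$ (since $|E \setminus A_{n-j}| - |E \setminus A_{n-j+1}| = 1$), and the rank terms contribute $r(A_{n-j}) - r(A_{n-j+1}) = -r_{n-j+1}$, so the $j$-th term equals $1 - r_{n-j+1} = \bar r_{n-j+1}$. As $j$ runs from $1$ to $n$, the index $n-j+1$ runs from $n$ down to $1$, which is exactly the asserted reversal-and-complementation.

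The remaining work is purely bookkeeping: check the edge case $j=1$ (where $A_{n-j+1} = A_n = E$, and $r(E) = r(M)$, so the formula still reads $1 - r_n = \bar r_n$), confirm that $\pi \mapsto \pi'$ is an involution and hence a bijection on $S_n$, and assemble the sum. There is no real obstacle here; the only mild subtlety is being careful with indices when translating between "initial segments of $\pi'$" and "final segments of $\pi$," and making sure the dual rank identity is applied with the correct ground set sizes. I would present this compactly as a one-line-per-term computation rather than grinding through each case separately, since the whole content is the single substitution above.

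\begin{proof}
For a permutation $\pi$ of $\{1,2,\ldots,n\}$, let $\pi'$ be the permutation $\pi'(j) = \pi(n+1-j)$; the map $\pi \mapsto \pi'$ is an involution, hence a bijection on the set of all $n!$ permutations. Write $\underline{r}(\pi) = r_1r_2\ldots r_n$ for the rank sequence of $\pi$ in $M$, and set $A_j = \{\pi(1),\pi(2),\ldots,\pi(j)\}$ for $0 \leq j \leq n$, so $A_0 = \emptyset$ and $A_n = E$. For each $j$ with $1 \leq j \leq n$, the set $\{\pi'(1),\ldots,\pi'(j)\}$ equals $E \setminus A_{n-j}$. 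Using the rank function of the dual, $r_{M^*}(S) = |S| - r(M) + r(E \setminus S)$, the $j$-th term of the rank sequence of $\pi'$ in $M^*$ is
\[
r_{M^*}\bigl(E \setminus A_{n-j}\bigr) - r_{M^*}\bigl(E \setminus A_{n-j+1}\bigr)
= \bigl(|E \setminus A_{n-j}| - |E \setminus A_{n-j+1}|\bigr) + \bigl(r(A_{n-j}) - r(A_{n-j+1})\bigr).
\]
The first bracket equals $1$, and the second equals $-r_{n-j+1}$ (this holds also when $j=1$, since $A_n = E$ and $r(E)=r(M)$), so the $j$-th term of $\underline{r}_{M^*}(\pi')$ is $1 - r_{n-j+1} = \bar r_{n-j+1}$. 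Therefore $\underline{r}_{M^*}(\pi') = \bar r_n \bar r_{n-1} \ldots \bar r_2 \bar r_1$. Summing over all permutations and using that $\pi \mapsto \pi'$ is a bijection,
\[
\mathcal{G}(M^*) = \sum_{\pi} [\underline{r}_{M^*}(\pi')]
= \sum_{\pi} [\bar r_n \bar r_{n-1} \ldots \bar r_2 \bar r_1],
\]
which is exactly the image of $\mathcal{G}(M) = \sum_\pi [r_1 r_2 \ldots r_n]$ under the stated specialization.
\end{proof}
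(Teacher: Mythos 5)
Your proof is correct: the computation with the dual rank identity $r_{M^*}(S) = |S| - r(M) + r(E \setminus S)$ applied to initial segments of the reversed permutation is carried out accurately, including the boundary cases, and the bijection $\pi \mapsto \pi'$ correctly assembles the sum. The paper itself gives no proof of this proposition (it is attributed to Derksen and stated for completeness), and your argument is the standard one that the statement implicitly rests on, so there is nothing further to reconcile.
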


\noindent 
There is no simple version of Proposition \ref{prop:dual} for the
$\gamma$-basis.

\subsection{Truncation and Higgs lift}

The \emph{truncation} $\mathrm{Trun}(M)$ of a matroid $M$ of positive
rank is the matroid on the same set whose bases are the independent
sets of $M$ that have size $r(M)-1$.  The construction dual to
truncation is the \emph {free or Higgs lift} \cite{Higgs} defined by
$\mathrm{Lift}(M) = (\mathrm{Trun}(M^*))^*$ for a matroid $M$ with at
least one circuit.

\begin{prop}\label{prop:trun}
  Let $M$ be a matroid having positive rank.  Then
  $\mathcal{G}(\mathrm{Trun}(M))$ can be obtained from
  $\mathcal{G}(M)$ by the specialization $[\underline{r}] \mapsto
  [\underline{r}^{\downarrow}]$, where $\underline{r}^{\downarrow}$ is
  the sequence obtained from $\underline{r}$ by replacing the
  right-most $1$ by $0$. Expressed in the $\gamma$-basis,
  $\mathcal{G}(\mathrm{Trun}(M))$ can be obtained by the
  specialization given by the linear transformation $\mathcal{G}(n,r)
  \to \mathcal{G}(n,r-1)$ defined on the $\gamma$-basis by
  $\gamma(a_0,a_1,\ldots,a_r) \mapsto
  \gamma(a_0,a_1,\ldots,a_{r-1}+a_r)$.

  Let $M$ have at least one circuit.  Then
  $\mathcal{G}(\mathrm{Lift}(M))$ can be obtained from
  $\mathcal{G}(M)$ by the specialization $[\underline{r}] \mapsto
  [\underline{r}^{\uparrow}]$, where $\underline{r}^{\uparrow}$ is
  obtained from $\underline{r}$ by replacing the left-most $0$ by $1$.
\end{prop}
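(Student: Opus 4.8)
The plan is to prove the three assertions in turn, with the symbol-basis specialization for the truncation as the main input for the other two. For the truncation, recall that $\mathrm{Trun}(M)$ has rank function $A\mapsto\min\{r(A),\,r-1\}$. Fix a permutation $\pi$ and let $j^*$ be the position of the right-most $1$ in $\underline{r}(\pi)$, so that $r(\{\pi(1),\ldots,\pi(j)\})<r$ for $j<j^*$ and $r(\{\pi(1),\ldots,\pi(j)\})=r$ for $j\ge j^*$. Passing to $\min\{\cdot,\,r-1\}$ leaves every running rank unchanged for $j<j^*$ and makes it constantly equal to $r-1$ for $j\ge j^*$, so the only increment that changes is the one at position $j^*$, which drops from $1$ to $0$. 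Thus the rank sequence of $\pi$ in $\mathrm{Trun}(M)$ is exactly $\underline{r}(\pi)^{\downarrow}$, and summing over the $n!$ permutations gives $\mathcal{G}(\mathrm{Trun}(M))=\sum_{\pi}[\underline{r}(\pi)^{\downarrow}]$, the asserted specialization of $\mathcal{G}(M)$.

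For the $\gamma$-basis version, write $\phi$ for the linear map $[\underline{r}]\mapsto[\underline{r}^{\downarrow}]$; since the $\gamma(a_0,\ldots,a_r)$ are a basis, it suffices to identify $\phi\bigl(\gamma(a_0,\ldots,a_r)\bigr)$ in $\mathcal{G}(n,r-1)$. I would write $\gamma(a_0,\ldots,a_r)=\sum_{\pi:\,\mathrm{flag}(\pi)=(X_i)}[\underline{r}(\pi)]$ for one flag $(X_i)$ of that composition (Lemma~\ref{gammabasis}), apply $\phi$, and use that $\underline{r}(\pi)^{\downarrow}$ is the rank sequence of $\pi$ computed in $\mathrm{Trun}(M)$ while deleting the copoint $X_{r-1}$ turns $(X_i)$ into a flag $(Y_i)$ of $\mathrm{Trun}(M)$ of composition $(a_0,\ldots,a_{r-2},\,a_{r-1}+a_r)$. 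The permutations with $\mathrm{flag}(\pi)=(X_i)$ are exactly those with $\mathrm{flag}_{\mathrm{Trun}(M)}(\pi)=(Y_i)$ whose $(r-1)$st basis element lies in the particular copoint $X_{r-1}$; since the copoints of $M$ lying above the rank-$(r-2)$ flat $X_{r-2}$ partition $E\setminus X_{r-2}$ (each $e\notin X_{r-2}$ lies in the unique copoint $\cl(X_{r-2}\cup\{e\})$), a second use of Lemma~\ref{gammabasis} inside $\mathrm{Trun}(M)$ collapses the sum to a single term, showing $\phi\bigl(\gamma(a_0,\ldots,a_r)\bigr)$ is the appropriate multiple of $\gamma(a_0,\ldots,a_{r-1}+a_r)$, the multiplicity being read off from that partition. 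Alternatively one can run the same bookkeeping on catenary data via Theorem~\ref{thm:catdata}, comparing $\nu(\mathrm{Trun}(M);\cdot)$ with the $\nu(M;\cdot)$.

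For the Higgs lift, use $\mathrm{Lift}(M)=(\mathrm{Trun}(M^*))^*$, which is legitimate exactly when $r(M^*)>0$, i.e.\ when $M$ has a circuit. Apply Proposition~\ref{prop:dual}, then the symbol form of the truncation specialization just proved (to $M^*$), then Proposition~\ref{prop:dual} again: on symbols this composes the involution ``reverse and complement'', then ``change the right-most $1$ to $0$'', then ``reverse and complement''. Tracking a single coordinate through the three steps, and using that reverse-and-complement squares to the identity, shows the composite sends $[\underline{r}]$ to $[\underline{r}^{\uparrow}]$, where $\underline{r}^{\uparrow}$ changes the left-most $0$ of $\underline{r}$ to $1$ (this left-most $0$ exists since $r<n$ once $M$ has a circuit). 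Hence $\mathcal{G}(\mathrm{Lift}(M))=\sum_{\pi}[\underline{r}(\pi)^{\uparrow}]$. The first and last parts are short once the running-rank identity in the first part is set up; the part that carries the real content is the $\gamma$-basis reformulation, where the map ``delete the top copoint'' from flags of $M$ to flags of $\mathrm{Trun}(M)$ is far from injective and one must describe its fibers, which is precisely what the partition of $E\setminus X_{r-2}$ by the copoints through $X_{r-2}$ supplies.
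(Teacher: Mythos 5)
Your symbol-basis argument for the truncation is correct: the running ranks in $\mathrm{Trun}(M)$ are $\min\{r(\{\pi(1),\ldots,\pi(j)\}),r-1\}$, so the only increment that changes is the one at the right-most $1$, and summing over permutations gives the specialization $[\underline{r}]\mapsto[\underline{r}^{\downarrow}]$. Your treatment of the Higgs lift is also correct: conjugating that specialization by the reverse-and-complement map of Proposition~\ref{prop:dual} via $\mathrm{Lift}(M)=(\mathrm{Trun}(M^*))^*$ does send $[\underline{r}]$ to $[\underline{r}^{\uparrow}]$, and the hypothesis bookkeeping ($M$ has a circuit iff $r(M^*)>0$ iff $\underline{r}$ contains a $0$) is right.

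The $\gamma$-basis step is where the proposal breaks down. Applying Lemma~\ref{gammabasis} inside $\mathrm{Trun}(M)$ to the flag $(X_0,\ldots,X_{r-2},E)$ identifies $\gamma(a_0,\ldots,a_{r-2},a_{r-1}+a_r)$ with the sum over \emph{all} permutations whose $\mathrm{Trun}(M)$-flag is $(X_0,\ldots,X_{r-2},E)$, that is, over the union of the fibers attached to \emph{every} copoint above $X_{r-2}$, not just the fiber for $X_{r-1}$ that you start from. So nothing ``collapses to a single term'': what your partition of $E-X_{r-2}$ actually yields is the aggregate identity $\sum_{X'}\phi\bigl(\gamma(a_0,\ldots,a_{r-2},|X'-X_{r-2}|,|E-X'|)\bigr)=\gamma(a_0,\ldots,a_{r-2},a_{r-1}+a_r)$, where $X'$ runs over the copoints containing $X_{r-2}$ and $\phi$ is the symbol map $[\underline{r}]\mapsto[\underline{r}^{\downarrow}]$; the ``appropriate multiple'' you defer to this partition is never computed. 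Worse, that multiple is not $1$, so the step cannot be repaired so as to yield the unit-coefficient map $\gamma(a_0,\ldots,a_r)\mapsto\gamma(a_0,\ldots,a_{r-1}+a_r)$: from the definitions, $\gamma(0,1,2)=2[110]$ while $\gamma(0,3)=6[100]$, so $\phi(\gamma(0,1,2))=2[100]=\tfrac13\gamma(0,3)$. Concretely, $\mathcal{G}(U_{2,3})=3\gamma(0,1,2)=6[110]$ and $\mathcal{G}(\mathrm{Trun}(U_{2,3}))=\mathcal{G}(U_{1,3})=\gamma(0,3)=6[100]$, whereas the unit-coefficient $\gamma$-map sends $3\gamma(0,1,2)$ to $3\gamma(0,3)=18[100]$; so that map is not the same linear map as $[\underline{r}]\mapsto[\underline{r}^{\downarrow}]$ and does not produce $\mathcal{G}(\mathrm{Trun}(M))$. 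Carrying your fiber count through (this is exactly where the partition by copoints above $X_{r-2}$ enters) one finds $\phi(\gamma(a_0,\ldots,a_r))=\frac{a_{r-1}}{a_{r-1}+a_r}\,\gamma(a_0,\ldots,a_{r-2},a_{r-1}+a_r)$, i.e., the $\gamma$-basis description of truncation holds only with this factor inserted. As written, your proof neither supplies that coefficient nor proves the unit-coefficient version it sets out to establish, and it misses that the two specializations it claims to reconcile are in fact different maps.
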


\subsection{Direct sum}\label{sect:dirsum} 

We first define a shuffle.  Let
$(\alpha_1,\alpha_2, \ldots, \alpha_m)$ and
$(\beta_1,\beta_2, \ldots, \beta_n)$ be sequences and $P$ be a subset
of $\{1,2,\ldots, m+n \}$ of size $m$. The \emph{shuffle}
$ \mathrm{sh}((\alpha_i),(\beta_j);P) $ is the length-$(m+n)$ sequence
obtained by inserting the sequence $(\alpha_i)$ in order into the
positions in $P$ and the sequence $(\beta_j)$ in order into the
remaining positions.  For example,
\[\mathrm{sh}((\alpha_1,\alpha_2,\alpha_3,\alpha_4),
(\beta_1,\beta_2,\beta_3);\{1,3,4,7\}) = (\alpha_1,\beta_1, \alpha_2,
\alpha_3, \beta_2, \beta_3, \alpha_4).
\]
If $[\underline{r}]$ and $[\underline{s}]$ are symbols, the first in
$\mathcal{G}(n_1,r_1)$ and the second in $\mathcal{G}(n_2,r_2)$, then
their \emph{shuffle product} $[\underline{r}]\shuffle [\underline{s}]$
is the following linear combination in
$\mathcal{G}(n_1+n_2, r_1+r_2)$:
\[ [\underline{r}] \shuffle [\underline{s}] = \sum_{P \subseteq \{1,2,
  \ldots, n_1+n_2\}, |P| = n_1} [\mathrm{sh}(\underline{r},
\underline{s};P)].
\]
The shuffle product $\shuffle$ is extended to $\mathcal{G}(n_1,r_1)
\times \mathcal{G}(n_1,r_1)$ by bilinearity.

The \emph{direct sum} $M_1 \oplus M_2$ of the matroids $M_i$ with rank
and rank functions $r_i$ on disjoint sets $E_i$ is the matroid on the
set $E_1 \cup E_2$ with rank function $r_{M_1 \oplus M_2}$ where, for
$X_i \subseteq E_i$,
\begin{equation}\label{eqn:dirsum}
  r_{M_1 \oplus M_2}
 (X_1 \cup X_2) = r_1(X_1) + r_2(X_2).  
\end{equation} 

From the fact that the rank sequence of a shuffle of two permutations
is the shuffle of their rank sequences, we obtain the following
result.

\begin{prop}\label{prop:Gdirect} 
  The $\mathcal{G}$-invariant of the direct sum $M_1 \oplus M_2$ is
  given by the formula
  \[
  \mathcal{G}(M_1 \oplus M_2) = \mathcal{G}(M_1) \shuffle
  \mathcal{G}(M_2).
  \]
\end{prop}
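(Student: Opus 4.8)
The plan is to exploit the defining formula $\mathcal{G}(M) = \sum_{\pi} [\underline{r}(\pi)]$ together with the bijective correspondence between permutations of $E_1 \cup E_2$ and triples consisting of a permutation $\pi_1$ of $E_1$, a permutation $\pi_2$ of $E_2$, and a choice of positions $P \subseteq \{1,2,\ldots,n_1+n_2\}$ with $|P| = n_1$ telling where the elements of $E_1$ land. First I would record the key arithmetic fact alluded to in the paragraph preceding the proposition: if $\pi$ is the permutation of $E_1\cup E_2$ obtained by interleaving $\pi_1$ and $\pi_2$ according to $P$, then $\underline{r}(\pi) = \mathrm{sh}(\underline{r}(\pi_1), \underline{r}(\pi_2); P)$. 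This follows from equation~(\ref{eqn:dirsum}): the rank of an initial segment of $\pi$ is the sum of the rank (in $M_1$) of the $E_1$-part of that segment and the rank (in $M_2$) of the $E_2$-part, so the incremental rank contributed at each step is exactly the corresponding entry of $\underline{r}(\pi_1)$ if that step reads an element of $E_1$, and of $\underline{r}(\pi_2)$ otherwise — which is precisely the definition of the shuffle of the two sequences along $P$.

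With that fact in hand, the computation is a reindexing of the sum. I would write
\[
\mathcal{G}(M_1 \oplus M_2) = \sum_{\pi} [\underline{r}(\pi)]
= \sum_{P} \sum_{\pi_1} \sum_{\pi_2} [\mathrm{sh}(\underline{r}(\pi_1), \underline{r}(\pi_2); P)],
\]
where $P$ ranges over the $n_1$-subsets of $\{1,\ldots,n_1+n_2\}$ and $\pi_i$ ranges over the permutations of $E_i$; the triple $(\pi_1,\pi_2,P)$ determines $\pi$ uniquely and every $\pi$ arises exactly once. Swapping the order of summation to put $\pi_1,\pi_2$ outermost and $P$ innermost, and using bilinearity of $\shuffle$, the inner sum over $P$ is by definition $[\underline{r}(\pi_1)] \shuffle [\underline{r}(\pi_2)]$, so the whole thing collapses to $\bigl(\sum_{\pi_1}[\underline{r}(\pi_1)]\bigr) \shuffle \bigl(\sum_{\pi_2}[\underline{r}(\pi_2)]\bigr) = \mathcal{G}(M_1) \shuffle \mathcal{G}(M_2)$.

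The only genuine content is the rank-sequence identity in the first step; everything after that is formal bookkeeping. The main obstacle, such as it is, is to verify that identity carefully — one must check that the entry of $\underline{r}(\pi)$ in position $j$ depends only on which of $E_1$, $E_2$ the $j$-th element of $\pi$ lies in and on the rank increment within that matroid, with no interference between the two summands. This is immediate from~(\ref{eqn:dirsum}) because rank in a direct sum is additive on the two coordinates, so a newly added element of $E_1$ raises the rank of the current initial segment by $0$ or $1$ according to whether it lies in the closure (in $M_1$) of the previously seen $E_1$-elements, independently of whatever $E_2$-elements have appeared; and symmetrically for $E_2$. I would also note in passing that $\diamond$ is the special case of $\shuffle$ in which one factor is a length-one symbol, so the deletion–contraction formulas of the previous subsection are consistent with this proposition. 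That finishes the proof.
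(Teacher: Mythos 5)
Your proof is correct and follows essentially the same route as the paper, which derives Proposition \ref{prop:Gdirect} precisely from the fact that the rank sequence of a shuffle of two permutations is the shuffle of their rank sequences (a consequence of the additivity of rank in equation (\ref{eqn:dirsum})), the rest being the formal reindexing of the sum over permutations that you carry out. Only your parenthetical aside is off: $\diamond$ is plain concatenation, not the special case of $\shuffle$ with a length-one factor (the latter would sum over all insertion positions), but this remark plays no role in your argument.
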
 

The next proposition follows easily from the fact that the lattice of
flats $L(M_1 \oplus M_2)$ of the direct sum is the direct product
$L(M_1) \times L(M_2)$ of the lattices of flats of the summands,
together with formula (\ref{eqn:dirsum}).

\begin{prop}\label{prop:dirsum}
  The catenary data of the direct sum $M_1\oplus M_2$ can be
  calculated in the following way: for an
  $(|E_1|+|E_2|,r_1+r_2)$-composition
  $(a_0 + b_0, c_1,c_2,\ldots,c_{r_1+r_2})$, where $a_0$ and $b_0$ are
  the numbers of loops in $M_1$ and $M_2$, respectively, we have
  \[
  \nu(M_1 \oplus M_2; a_0 + b_0, c_1,\ldots,c_{r_1+r_2})
  =\sum_{((a_i),(b_j),P)} \nu(M_1;a_0,a_1, \ldots,a_{r_1} ) \, \nu(M_2;
  b_0,b_1, \ldots, b_{r_2})
  \]
  where the sum is over all triples
  $((a_1, \ldots,a_{r_1}), (b_1, \ldots, b_{r_2}),P)$ with $|P|=r_1$
  and
  $$\mathrm{sh}((a_1, \ldots,a_{r_1}),(b_1, \ldots, b_{r_2});P) =
  (c_1,c_2,\ldots,c_{r_1+r_2}).$$
\end{prop}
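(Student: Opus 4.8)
The plan is to reduce the statement to a bijection between flags of $M_1 \oplus M_2$ and triples consisting of a flag of $M_1$, a flag of $M_2$, and a shuffle position set. First I would invoke the standard fact that the lattice of flats $L(M_1 \oplus M_2)$ is the direct product $L(M_1) \times L(M_2)$: a flat of $M_1 \oplus M_2$ is exactly a set $F_1 \cup F_2$ with $F_i$ a flat of $M_i$, and by (\ref{eqn:dirsum}) its rank is $r_1(F_1) + r_2(F_2)$. Consequently a flag $(Z_0, Z_1, \ldots, Z_{r_1+r_2})$ of $M_1 \oplus M_2$ is obtained by choosing a flag $(X_0, \ldots, X_{r_1})$ of $M_1$ and a flag $(Y_0, \ldots, Y_{r_2})$ of $M_2$ and then interleaving them: at each step we go up by one in rank, and the set $P \subseteq \{1,2,\ldots,r_1+r_2\}$ records the $r_1$ steps at which the $M_1$-coordinate increases. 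This gives a bijection between flags of $M_1 \oplus M_2$ and triples $\bigl((X_i),(Y_j),P\bigr)$ with $|P| = r_1$. Since $Z_0 = X_0 \cup Y_0$ and $X_0, Y_0$ are the sets of loops of $M_1, M_2$, the number of loops of $M_1\oplus M_2$ is $a_0 + b_0$, explaining why that coordinate of the composition is fixed.

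The next step is to track how the composition of the big flag is built from those of the small ones. If $(X_i)$ has composition $(a_0, a_1, \ldots, a_{r_1})$ and $(Y_j)$ has composition $(b_0, b_1, \ldots, b_{r_2})$, then, reading the interleaved flag from the bottom, the $k$-th rank jump (for $k \geq 1$) adds $a_i$ new elements if it is the $i$-th $M_1$-jump (i.e.\ if the $i$-th smallest element of $P$ equals $k$) and $b_j$ new elements if it is the $j$-th $M_2$-jump. In other words, the composition $(c_1, \ldots, c_{r_1+r_2})$ of $Z$ (after its fixed initial entry $a_0 + b_0$) is precisely $\mathrm{sh}\bigl((a_1,\ldots,a_{r_1}),(b_1,\ldots,b_{r_2});P\bigr)$, using the shuffle operation on sequences defined earlier. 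Summing over all flags of $M_1 \oplus M_2$ with a given composition $(a_0+b_0, c_1, \ldots, c_{r_1+r_2})$ and partitioning that set via the bijection into flags of $M_1$, flags of $M_2$, and position sets $P$ compatible with $(c_1,\ldots,c_{r_1+r_2})$, we get exactly the claimed formula, since the number of $(X_i)$ with composition $(a_i)$ is $\nu(M_1; a_0, \ldots, a_{r_1})$ and likewise for $M_2$.

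I expect the only point requiring care — the main obstacle, though a modest one — is verifying that the interleaving construction really is a bijection onto \emph{all} flags of $M_1 \oplus M_2$, and in particular that every maximal chain of flats in the product lattice arises this way and that the projections to $L(M_1)$ and $L(M_2)$ are themselves maximal chains. This follows because in a chain $Z_0 \subset Z_1 \subset \cdots \subset Z_{r_1+r_2}$ of flats, writing $Z_k = X^{(k)} \cup Y^{(k)}$, successive ranks differ by $1$ and $r(Z_k) = r_1(X^{(k)}) + r_2(Y^{(k)})$, so at each step exactly one of $r_1(X^{(k)})$ or $r_2(Y^{(k)})$ increases by $1$; the repeated values of $X^{(k)}$ (resp.\ $Y^{(k)}$) then collapse to a genuine flag of $M_1$ (resp.\ $M_2$), and the positions of the strict increases on the $M_1$-side form the set $P$. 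One also records the inverse map (shuffle the two flags according to $P$) and checks the two maps are mutually inverse, which is immediate once the bookkeeping is set up. With that in hand, the composition identity and the counting are routine, and the proposition follows. \qedhere
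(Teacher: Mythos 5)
Your argument is correct and is exactly the route the paper intends: the paper states that the proposition "follows easily" from the fact that $L(M_1\oplus M_2)=L(M_1)\times L(M_2)$ together with the rank formula (\ref{eqn:dirsum}), and your proposal simply fills in that bijection between flags of the direct sum and shuffled pairs of flags, with the attendant composition bookkeeping. No gaps; this matches the paper's (unwritten) proof in substance and detail.
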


The next result treats the effect on the $\mathcal{G}$-invariant of
adding a coloop or loop.

\begin{cor}\label{cor:coloops}
  The $\mathcal{G}$-invariant $\mathcal{G}(M \oplus U_{1,1})$ can be
  obtained from $\mathcal{G}(M)$ by the specialization
  \[ [r_1r_2 \ldots r_n] \mapsto [1r_1r_2 \ldots r_n] + \sum_{j=1}^n
  \,\, [r_1 \ldots r_j 1 \ldots r_n]
  \]
  and $\mathcal{G}(M \oplus U_{0,1})$ can be obtained from
  $\mathcal{G}(M)$ by the specialization
  \[ [r_1r_2 \ldots r_n] \mapsto [0r_1r_2 \ldots r_n] + \sum_{j=1}^n
  \,\, [r_1 \ldots r_j 0 \ldots r_n].
  \]
\end{cor}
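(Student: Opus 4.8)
The plan is to derive Corollary \ref{cor:coloops} as a direct specialization of Proposition \ref{prop:Gdirect}. Since $U_{1,1}$ is the rank-$1$ matroid on a single element that is a coloop, its rank sequence over its one permutation is the length-$1$ sequence $1$, so $\mathcal{G}(U_{1,1}) = [1]$, an element of $\mathcal{G}(1,1)$. Likewise $U_{0,1}$ is a single loop, with rank sequence $0$, so $\mathcal{G}(U_{0,1}) = [0]$ in $\mathcal{G}(1,0)$. By Proposition \ref{prop:Gdirect}, $\mathcal{G}(M \oplus U_{1,1}) = \mathcal{G}(M) \shuffle [1]$ and $\mathcal{G}(M \oplus U_{0,1}) = \mathcal{G}(M) \shuffle [0]$. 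It therefore suffices to compute what the shuffle product with a length-$1$ symbol does to an arbitrary symbol $[r_1 r_2 \ldots r_n]$ and then invoke bilinearity of $\shuffle$.

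First I would unwind the definition of the shuffle product in the special case $n_2 = 1$. Here $P$ ranges over the subsets of $\{1, 2, \ldots, n+1\}$ of size $n$; equivalently, $P$ is determined by the single position $k \in \{1, 2, \ldots, n+1\}$ that is \emph{omitted}, which is the position into which the one term of the length-$1$ sequence (namely $r_{n+1}=1$ for the coloop, or $0$ for the loop) is inserted. For the omitted position $k$, the shuffle $\mathrm{sh}((r_1, \ldots, r_n), (1); P)$ is the sequence with $1$ inserted into slot $k$ and the $r_i$ filling the other slots in order; that is, $[r_1 \ldots r_{k-1}\, 1\, r_k \ldots r_n]$. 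Summing over $k = 1, \ldots, n+1$ and separating out the term $k=1$ (which gives $[1 r_1 r_2 \ldots r_n]$) from the remaining terms $k = 2, \ldots, n+1$ (which, reindexing $j = k-1$, give $\sum_{j=1}^n [r_1 \ldots r_j\, 1\, r_{j+1} \ldots r_n]$) yields exactly the stated specialization. The loop case is identical with $1$ replaced by $0$.

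The only genuinely routine point to verify is that the ``$\ldots$'' in the paper's displayed formulas, $[r_1 \ldots r_j 1 \ldots r_n]$, is correctly interpreted as inserting a $1$ between $r_j$ and $r_{j+1}$, which is precisely what the index bookkeeping above produces; there is no real obstacle here, only the care needed to confirm the off-by-one reindexing and the fact that $\mathcal{G}(U_{1,1}) = [1]$ and $\mathcal{G}(U_{0,1}) = [0]$ (immediate from the definition of $\mathcal{G}$, as there is a single permutation on a one-element ground set). Finally, I would extend by bilinearity to conclude that the same substitution, applied termwise to $\mathcal{G}(M)$ written in the symbol basis, computes $\mathcal{G}(M \oplus U_{1,1})$ and $\mathcal{G}(M \oplus U_{0,1})$ respectively. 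The main ``obstacle,'' such as it is, is purely notational: making sure the insertion-position indexing in the shuffle definition lines up with the way the corollary writes the two output sums.
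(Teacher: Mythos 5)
Your proposal is correct and matches the paper's intent: the result is stated as an immediate corollary of Proposition \ref{prop:Gdirect} (with $\mathcal{G}(U_{1,1})=[1]$ and $\mathcal{G}(U_{0,1})=[0]$), and your unwinding of the shuffle product with a length-$1$ symbol, insertion position by insertion position, is exactly the verification the paper leaves to the reader.
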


There is also a simple description of the effect of adding or removing
loops from a matroid using the $\gamma$-basis.

\begin{prop}\label{prop:manyloops}
  Let $M$ be a loopless rank-$r$ matroid.  Then when expressed in the
  $\gamma$-basis, the $\mathcal{G}$-invariants $\mathcal{G}(M \oplus
  U_{0,h})$ and $\mathcal{G}(M)$ can be obtained from each other by
  the specializations
  \[
  \gamma(h,a_1,a_2, \ldots,a_r) \longleftrightarrow \gamma(0,a_1,a_2,
  \ldots,a_r).
  \]
\end{prop}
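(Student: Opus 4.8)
The essential observation is that adding $h$ loops to a loopless matroid $M$ simply shifts all loop-counts in the catenary data: if $(X_0, X_1, \ldots, X_r)$ is a flag of $M$ (so $X_0 = \cl(\emptyset) = \emptyset$ since $M$ is loopless, hence $|X_0| = 0$), then $(X_0 \cup L, X_1 \cup L, \ldots, X_r \cup L)$, where $L$ is the set of $h$ new loops, is a flag of $M \oplus U_{0,h}$, and this correspondence is a bijection between flags of the two matroids. Under it, a flag of $M$ with composition $(0, a_1, a_2, \ldots, a_r)$ corresponds to a flag of $M \oplus U_{0,h}$ with composition $(h, a_1, a_2, \ldots, a_r)$, since the rank-$i$ flat gains exactly the $h$ loops and nothing else. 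Therefore
\[
\nu(M \oplus U_{0,h}; h, a_1, \ldots, a_r) = \nu(M; 0, a_1, \ldots, a_r)
\]
for every $(n,r)$-composition $(0,a_1,\ldots,a_r)$, and all other coordinates of the catenary data of $M \oplus U_{0,h}$ are zero (every flat of $M \oplus U_{0,h}$ contains all $h$ loops, so $a_0 = h$ is forced). By Theorem~\ref{thm:catdata}, $\mathcal{G}(M) = \sum \nu(M; 0, a_1, \ldots, a_r)\, \gamma(0, a_1, \ldots, a_r)$ and $\mathcal{G}(M \oplus U_{0,h}) = \sum \nu(M; 0, a_1, \ldots, a_r)\, \gamma(h, a_1, \ldots, a_r)$, where both sums run over the same index set; the two expansions have identical coefficients, so the stated replacement $\gamma(h, a_1, \ldots, a_r) \leftrightarrow \gamma(0, a_1, \ldots, a_r)$ carries one to the other.

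First I would verify the bijection on flags carefully: a set is a flat of $M \oplus U_{0,h}$ if and only if it is of the form $F \cup L$ with $F$ a flat of $M$ (this is immediate from the description of the lattice of flats of a direct sum as a product, noting $U_{0,h}$ has a single flat, namely all $h$ loops), and $r_{M \oplus U_{0,h}}(F \cup L) = r_M(F)$; hence maximal chains correspond. Then I would track compositions along the bijection: $|X_0 \cup L| = h$ since $X_0 = \emptyset$, and for $j \geq 1$, $|(X_j \cup L) \setminus (X_{j-1} \cup L)| = |X_j \setminus X_{j-1}| = a_j$. This establishes the displayed equality of $\nu$-values and that no composition with $a_0 \neq h$ occurs for $M \oplus U_{0,h}$.

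The only remaining point is to note that $\gamma(0, a_1, \ldots, a_r)$ ranges over a subset of the $\gamma$-basis of $\mathcal{G}(n,r)$ (the basis vectors indexed by loopless compositions) and $\gamma(h, a_1, \ldots, a_r)$ ranges over a subset of the $\gamma$-basis of $\mathcal{G}(n+h, r)$, and that the linear map swapping these two indexed families (and defined arbitrarily, say as zero, on the complementary basis vectors, or better: restricted to the spans that actually occur) carries $\mathcal{G}(M)$ to $\mathcal{G}(M \oplus U_{0,h})$ and back. Since both $\mathcal{G}$-invariants lie in these respective spans, the specialization is well-defined on them and is the identity on coefficients.

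**Expected obstacle.** There is essentially no hard step here; the proposition is an easy consequence of Theorem~\ref{thm:catdata} once the flag bijection is in hand. The only thing requiring a little care is bookkeeping: making sure that (i) loopless-ness of $M$ is exactly what forces $a_0 = 0$ in the catenary data of $M$ and $a_0 = h$ in that of $M \oplus U_{0,h}$, so that the index sets of the two $\gamma$-expansions are in canonical bijection, and (ii) the phrase "obtained from each other by the specializations" is interpreted correctly, i.e., the specialization need only be defined on (and is an isomorphism between) the subspaces spanned by the $\gamma$'s actually appearing, not on all of $\mathcal{G}(n,r)$ and $\mathcal{G}(n+h,r)$.
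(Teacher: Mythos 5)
Your proof is correct, and it follows the route the paper clearly intends (the paper states Proposition \ref{prop:manyloops} without proof as an immediate consequence of Theorem \ref{thm:catdata}): flats of $M\oplus U_{0,h}$ are exactly the sets $F\cup L$ with $F$ a flat of $M$, so flags correspond, compositions change only by $a_0=0\mapsto a_0=h$, and the two $\gamma$-expansions have identical coefficients. Your bookkeeping about looplessness forcing $a_0=0$, and about the two families of $\gamma$'s lying in $\mathcal{G}(n,r)$ and $\mathcal{G}(n+h,r)$ respectively, is exactly the care the statement requires.
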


While $M$ has exactly $a_0$ loops if and only if some term of the form
$\nu(M;a_0,a_1, \ldots,a_r)$ is positive, the number of coloops is the
greatest integer $k$ for which some composition ending in $k$ ones has
$\nu(M;a_0,a_1,\ldots,1,1)>0$.

In the cycle matroid $M(K_{r+1})$ of the complete graph, there are
$2^{r}-1$ copoints.  For $1 \leq m < \lfloor (r+1)/2 \rfloor$, there
are $\binom {r+1}{m}$ copoints isomorphic to $M(K_{r+1-m}) \oplus
M(K_m)$, with an additional $\frac {1}{2} \binom {r+1}{(r+1)/2}$
copoints isomorphic to $M(K_{(r+1)/2}) \oplus M(K_{(r+1)/2})$ if $r+1$
is even.  Hence, by Propositions \ref{catcurse} and \ref{prop:dirsum},
the catenary data of $M(K_{r+1})$ can be obtained recursively from the
catenary data of lower-rank cycle matroids $M(K_m)$. This recursion is
straightforward but complicated.  Similar recursions exist for the
catenary data of other matroids (such as bicircular matroids)
constructed from complete graphs.

\begin{ex}\label{cyclematroid}  
  Consider $M(K_5)$. In $M(K_5)$, there are five copoints isomorphic
  to $M(K_4)$ and ten isomorphic to $M(K_3) \oplus M(K_2)$. The
  catenary data of $M(K_4)$ is given in Example \ref{Kfour}.  Since
  $M(K_3) \oplus M(K_2) \cong U_{2,3} \oplus U_{1,1}$, Proposition
  \ref{prop:dirsum} yields its catenary data:
  \[
  \nu(M(K_3) \oplus M(K_2);0,1,1,2) = 6,\quad \nu(M(K_3) \oplus
  M(K_2);0,1,2,1) = 3.
  \]
  We can now use Proposition \ref{catcurse} to obtain the catenary
  data of $M(K_5)$:
  \begin{align*}
    & \nu(M(K_5);0,1,2,3,4) = 5 \cdot 12 = 60,
    \\
    & \nu(M(K_5);0,1,1,4,4) = 5 \cdot 6 \, = 30,
    \\
    & \nu(M(K_5);0,1,1,2,6) = 10 \cdot 6 = 60,
    \\
    & \nu(M(K_5);0,1,2,1,6) = 10 \cdot 3 = 30.
  \end{align*}
\end{ex}

The Dowling matroids $Q_r(G)$ based on the finite group $G$ generalize
the cycle matroids $M(K_{r+1})$ (see \cite{Dowling}).  In $Q_r(G)$,
there are $[(|G|+1)^r - 1]/|G|$ copoints.  When $|G| \geq 2$ and $0
\leq m \leq r-1$, there are $\binom {r}{m} |G|^{r-m-1}$ copoints
isomorphic to $Q_m (G) \oplus M(K_{r-m})$. Thus, by Lemma
\ref{catcurse}, the catenary data of $Q_r(G)$ can be obtained from
lower-rank Dowling matroids or cycle matroids of complete graphs by a
recursion depending only on the order $|G|$. The next result follows
by induction starting with the observation that $Q_1(G)$ is isomorphic
to $U_{1,1}$.
 
\begin{prop}\label{Dowling} 
  The $\mathcal{G}$-invariant of the Dowling matroid $Q_r(G)$ depends
  only on $r$ and $|G|$.
\end{prop}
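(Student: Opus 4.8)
The plan is to prove Proposition~\ref{Dowling} by strong induction on the rank $r$, using the copoint recursion for catenary data (Propositions~\ref{catcurse} and~\ref{prop:dirsum}) together with Theorem~\ref{thm:catdata}, which says that knowing the $\mathcal{G}$-invariant is the same as knowing the catenary data. So it suffices to show that the catenary data of $Q_r(G)$ depends only on $r$ and $|G|$. The base case is $r=1$: the matroid $Q_1(G)$ is isomorphic to $U_{1,1}$ regardless of $G$, so its $\mathcal{G}$-invariant (equivalently, its catenary data) depends only on $r=1$ and trivially on $|G|$.

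For the inductive step, fix $r \geq 2$ and assume the claim holds for all ranks less than $r$. By Proposition~\ref{catcurse}, for each $(n,r)$-composition $(a_0,a_1,\ldots,a_r)$ with $n$ the number of elements of $Q_r(G)$ (which is $\binom{r}{2}|G| + r$, a function of $r$ and $|G|$ alone), we have
\[
\nu(Q_r(G);a_0,a_1,\ldots,a_{r-1},a_r) = \sum_{\substack{X \text{ a copoint of } Q_r(G)\\ |E-X| = a_r}} \nu\bigl(Q_r(G)|X;\,a_0,a_1,\ldots,a_{r-1}\bigr).
\]
The copoints of $Q_r(G)$ are, up to isomorphism, the matroids $Q_m(G)\oplus M(K_{r-m})$ for $0\le m\le r-1$, occurring with multiplicity $\binom{r}{m}|G|^{r-m-1}$ (interpreting $Q_0(G)$ appropriately as the empty/rank-$0$ matroid on no elements, and noting $M(K_1)$ is rank $0$); these multiplicities and the sizes $|E - X|$ depend only on $r$ and $|G|$. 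By the induction hypothesis, $\mathcal{G}(Q_m(G))$ depends only on $m$ and $|G|$ for $m < r$, hence so does its catenary data; and $\mathcal{G}(M(K_{r-m}))$ is an absolute constant. Therefore, by Proposition~\ref{prop:dirsum}, the catenary data of each copoint restriction $Q_m(G)\oplus M(K_{r-m})$ is determined by $r$ and $|G|$. Substituting these into the displayed sum, together with the copoint multiplicities, expresses $\nu(Q_r(G);a_0,\ldots,a_r)$ as a quantity depending only on $r$ and $|G|$. This holds for every composition, so the catenary data of $Q_r(G)$ — and hence, by Theorem~\ref{thm:catdata}, its $\mathcal{G}$-invariant — depends only on $r$ and $|G|$, completing the induction.

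The main point requiring care, rather than a genuine obstacle, is the bookkeeping of copoint types in $Q_r(G)$ and their multiplicities: one must confirm that the number of copoints isomorphic to each $Q_m(G)\oplus M(K_{r-m})$ is $\binom{r}{m}|G|^{r-m-1}$ when $|G|\ge 2$ (and handle the degenerate case $|G|=1$, where $Q_r(G)\cong M(K_{r+1})$ separately, though the recursion structure is identical). Since these combinatorial facts about the Dowling geometry are standard (see \cite{Dowling}) and were already quoted in the paragraph preceding the proposition, the argument reduces to invoking the two recursion results and the induction hypothesis; no step involves a substantive new difficulty.
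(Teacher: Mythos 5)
Your proposal is correct and follows essentially the same route as the paper: induction on $r$ starting from $Q_1(G)\cong U_{1,1}$, using the copoint recursion of Proposition~\ref{catcurse}, the classification of copoints of $Q_r(G)$ as $Q_m(G)\oplus M(K_{r-m})$ with multiplicities $\binom{r}{m}|G|^{r-m-1}$, and Proposition~\ref{prop:dirsum} to get the catenary data of those direct sums. The extra care you flag (the $|G|=1$ case and the degenerate summands $M(K_1)$, $Q_0(G)$) matches how the paper handles $M(K_{r+1})$ separately just before the proposition.
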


\subsection{Free extension and coextension}\label{sec:freeexcoex}
Given a matroid $M$ and an element $x$ not in $E(M)$, the \emph{free
  extension} $M+x$ is the matroid on $E(M) \cup x$ whose bases are the
bases of $M$ or the sets of the form $B\cup x$ where $B$ is a basis of
a copoint of $M$.  Equivalently,
\[
M + x = \mathrm{Trun}(M \oplus U_{1,1}),
\]
where $U_{1,1}$ is the rank-$1$ matroid on the set $\{x\}$. The
construction dual to free extension is the \emph{free coextension}
$M\times x$, defined by $M\times x = (M^*+x)^*$.  This equals the
Higgs lift $\mathrm{Lift}(M \oplus U_{0,1})$, where $U_{0,1}$ is the
rank-$0$ matroid on the set $\{x\}$.

\begin{prop} \label{prop:fext} The $\mathcal{G}$-invariant of the free
  extension $M+x$ is obtained from $\mathcal{G}(M)$ by the
  specialization
  \[
  [r_1r_2 \ldots r_n] \mapsto [1r_1r_2 \ldots r_n]^{\downarrow} +
  \sum_{j=1}^n [r_1 \ldots r_j 1 \ldots r_n]^{\downarrow},
  \]
  and the $\mathcal{G}$-invariant of the free coextension $M\times x$
  is obtained by
  \[ [r_1r_2 \ldots r_n] \mapsto [0r_1r_2 \ldots r_n]^{\uparrow} +
  \sum_{j=1}^n [r_1 \ldots r_j 0 \ldots r_n]^{\uparrow}.
  \]
\end{prop}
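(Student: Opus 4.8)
The plan is to derive both specializations from results already established in the excerpt, using the definitions $M+x = \mathrm{Trun}(M \oplus U_{1,1})$ and $M \times x = (M^* + x)^*$. First I would handle the free extension. Corollary \ref{cor:coloops} tells us how to pass from $\mathcal{G}(M)$ to $\mathcal{G}(M \oplus U_{1,1})$: the symbol $[r_1 r_2 \ldots r_n]$ maps to $[1 r_1 r_2 \ldots r_n] + \sum_{j=1}^n [r_1 \ldots r_j 1 \ldots r_n]$, where in the $j$-th term the new $1$ is inserted immediately after position $j$. Then Proposition \ref{prop:trun} says that applying $\mathrm{Trun}$ amounts to the specialization $[\underline{r}] \mapsto [\underline{r}^{\downarrow}]$, replacing the right-most $1$ by $0$. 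Composing these two specializations, each symbol $[1 r_1 \ldots r_n]$ or $[r_1 \ldots r_j 1 \ldots r_n]$ produced by the coloop step gets its right-most $1$ deleted; that is exactly the claimed formula $[r_1 r_2 \ldots r_n] \mapsto [1 r_1 r_2 \ldots r_n]^{\downarrow} + \sum_{j=1}^n [r_1 \ldots r_j 1 \ldots r_n]^{\downarrow}$. So the only thing to check carefully is that the composition of a ``linear transformation followed by a linear transformation'' specialization is computed termwise in the obvious way, which is immediate since both are linear maps on $\mathcal{G}(n,r)$ and the $\mathcal{G}$-invariant behaves functorially under these constructions (as used implicitly throughout Section \ref{sec:constructions}).

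For the free coextension, I would use $M \times x = (M^* + x)^*$ and Proposition \ref{prop:dual}, which says dualizing reverses the sequence and swaps $0 \leftrightarrow 1$. The computation is: start with a symbol $[\underline{r}]$ for $M$; dualize to get $[\bar{r}_n \bar{r}_{n-1} \ldots \bar{r}_1]$; apply the free-extension specialization just proved; then dualize again. Tracking what reversal-and-complement does to the operations ``insert a $1$ after position $j$'' and ``delete the right-most $1$'': insertion of a $1$ becomes insertion of a $0$, deletion of the right-most $1$ becomes deletion of the left-most $0$ (which is the operation $\uparrow$ read in reverse — more precisely, $(\cdot)^{\downarrow}$ conjugated by reverse-complement is $(\cdot)^{\uparrow}$ up to the ambient reversal that the outer dual undoes), and ``after position $j$'' in the reversed word corresponds to ``before position $n+1-j$'' in the original, so reindexing the sum $j \mapsto n+1-j$ (plus the extra term from the prepended entry) recovers exactly $[0 r_1 r_2 \ldots r_n]^{\uparrow} + \sum_{j=1}^n [r_1 \ldots r_j 0 \ldots r_n]^{\uparrow}$. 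I expect the bookkeeping of indices under the double dual — making sure the ``prepend'' term of the free extension lands as the ``prepend'' term of the free coextension and that the sum index is reversed correctly — to be the only delicate point, but it is purely a finite combinatorial check with no estimates involved.

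Alternatively, and perhaps more cleanly, the coextension formula can be obtained directly in parallel with the extension formula: combine the loop-addition specialization of Corollary \ref{cor:coloops} (the $M \oplus U_{0,1}$ case, $[r_1 \ldots r_n] \mapsto [0 r_1 \ldots r_n] + \sum_{j=1}^n [r_1 \ldots r_j 0 \ldots r_n]$) with the Higgs-lift specialization $[\underline{r}] \mapsto [\underline{r}^{\uparrow}]$ from Proposition \ref{prop:trun}, using $M \times x = \mathrm{Lift}(M \oplus U_{0,1})$ as noted in the text. Composing these termwise gives the stated formula immediately, and this avoids the double dual entirely. I would present the free-extension half via $\mathrm{Trun}(M \oplus U_{1,1})$ and the free-coextension half via $\mathrm{Lift}(M \oplus U_{0,1})$, so that in both cases the proof is a one-line composition of two previously established specializations; the ``hard part'' is then merely confirming that the left-most-$0$ / right-most-$1$ operations commute with the insertions in the way the notation suggests, which is routine.
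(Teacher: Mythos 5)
Your proposal is correct and matches the paper's intended argument: the paper states this proposition without a separate proof precisely because it follows by composing Corollary \ref{cor:coloops} with Proposition \ref{prop:trun} via the identities $M+x=\mathrm{Trun}(M\oplus U_{1,1})$ and $M\times x=\mathrm{Lift}(M\oplus U_{0,1})$, which is exactly your preferred presentation (the version you give last, avoiding the double dual). The duality route you sketch first is also workable but adds needless index bookkeeping; the direct composition, noting only that $M\oplus U_{1,1}$ has positive rank and $M\oplus U_{0,1}$ has a circuit so that $\mathrm{Trun}$ and $\mathrm{Lift}$ apply, is all that is needed.
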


A freedom (or nested) matroid is obtained from a loop or a coloop by a
sequence of free extensions or additions of a coloop.  Thus, one can
recursively compute the $\mathcal{G}$-invariant of a freedom matroid
using Corollary \ref{cor:coloops} and Proposition \ref{prop:fext}.

\begin{ex}  
  The matroid $N$ in Figure \ref{fig:sameconfig} is the free extension
  of $N\del x$. The $\mathcal{G}$-invariant of $N\del x$ is
  $96\,[11100]+24\,[11010]$.  Each occurrence of $[11100]$ in
  $\mathcal{G}(N\del x)$ gives rise to six terms in the free extension
  in the following way.  First insert a $\mathbf{1}$ to obtain
  \[ [\mathbf{1}11100]+[1\mathbf{1}1100]+
  [11\mathbf{1}100]+[111\mathbf{1}00]+[1110\mathbf{1}0]+
  [11100\mathbf{1}]
  \]
  and then change the right-most $1$ to $\mathbf{0}$
  \[ [111\mathbf{0}00]+[111\mathbf{0}00]+
  [111\mathbf{0}00]+[111\mathbf{0}00]+[1110\mathbf{0}0]+
  [11100\mathbf{0}].
  \]
  Likewise, each occurrence of $[11010]$ gives rise to six terms:
  \[ [111000]+[111000]+ [111000]+[110100]+[110100]+ [110100].
  \]
  From this, we obtain
  \[
  \mathcal{G}(N) = (96\cdot 6+3\cdot 24)\,[111000]+3\cdot 24\,[110100]
  = 648\,[111000]+72\,[110100].
  \]
\end{ex}

\subsection{Free product}

The free product is a non-commutative matroid operation defined by
Crapo and Schmitt~\cite{fpm}.  Given an ordered pair $M_1$ and $M_2$
of matroids of ranks $r_1$ and $r_2$ on disjoint sets $E_1$ and $E_2$,
the \emph{free product} $M_1\frp M_2$ is the matroid on $E_1\cup E_2$
whose bases are the subsets $B$ of $E_1\cup E_2$ of size $r_1+r_2$
such that $B\cap E_1$ is independent in $M_1$ and $B\cap E_2$ spans
$M_2$. The rank function of $M_1 \frp M_2$ is given as follows: for
$X_i \subseteq E_i$,
\begin{equation}\label{eq:frprk}
  r_{M_1\frp M_2}(X_1\cup X_2)= \min\{r_1(E_1) + r_2(X_2),r_1(X_1)+|X_2|\}.
\end{equation}
The free product $U_{1,1} \frp M$ is the free
  coextension of $M$, while $M\frp U_{0,1}$ is the free extension of
  $M$. 

\begin{prop}\label{prop:Gfrpr}
  The $\mathcal{G}$-invariant of the free product $M_1\frp M_2$ can be
  calculated from the $\mathcal{G}$-invariants of $M_1$ and $M_2$.
\end{prop}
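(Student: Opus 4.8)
The plan is to compute $\mathcal{G}(M_1 \frp M_2)$ by understanding the rank sequences $\underline{r}(\pi)$ of permutations $\pi$ of $E_1 \cup E_2$ in terms of how $\pi$ interleaves $E_1$ and $E_2$. Fix a permutation $\pi$ of $E_1 \cup E_2$; it determines a set $P \subseteq \{1,2,\ldots,n_1+n_2\}$ of size $n_1$ recording the positions occupied by elements of $E_1$, together with an induced permutation $\pi_1$ of $E_1$ and an induced permutation $\pi_2$ of $E_2$. The first step is to show that $\underline{r}(\pi)$ depends only on $P$, $\underline{r}_{M_1}(\pi_1)$, and $\underline{r}_{M_2}(\pi_2)$; indeed, using the rank formula (\ref{eq:frprk}) one checks that as we add elements of $E_1 \cup E_2$ one at a time in the order given by $\pi$, the rank of $M_1 \frp M_2$ on the prefix equals $\min\{r_1 + r_2(\text{current } E_2\text{-part}),\, r_1(\text{current } E_1\text{-part}) + |\text{current } E_2\text{-part}|\}$, so the increment (the next entry of $\underline{r}(\pi)$) is a determined function of the data $(P, \underline{r}_{M_1}(\pi_1), \underline{r}_{M_2}(\pi_2))$ and the position we are at.

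The key structural step, then, is to make this ``determined function'' explicit, i.e.\ to define, for each subset $P$ of size $n_1$ in $\{1,\ldots,n_1+n_2\}$ and each pair of sequences $\underline{r} \in \mathcal{G}(n_1,r_1)$, $\underline{s}\in\mathcal{G}(n_2,r_2)$, a single $(n_1+n_2, r_1+r_2)$-sequence $\underline{r}\,\natural_P\,\underline{s}$ that equals $\underline{r}(\pi)$ whenever $\pi$ has position-set $P$ and induced rank sequences $\underline{r}$ and $\underline{s}$. Summing over all $\pi$ and grouping by $(P,\pi_1,\pi_2)$ then gives
\[
\mathcal{G}(M_1\frp M_2) \;=\; \sum_{|P|=n_1}\ \sum_{\pi_1}\ \sum_{\pi_2} [\,\underline{r}(\pi_1)\,\natural_P\,\underline{r}(\pi_2)\,]
\;=\; \sum_{|P|=n_1} \Bigl(\text{bilinear operation applied to } \mathcal{G}(M_1),\mathcal{G}(M_2)\Bigr),
\]
which exhibits $\mathcal{G}(M_1\frp M_2)$ as the value of a fixed bilinear map $\mathcal{G}(n_1,r_1)\times\mathcal{G}(n_2,r_2)\to\mathcal{G}(n_1+n_2,r_1+r_2)$ on $(\mathcal{G}(M_1),\mathcal{G}(M_2))$, proving the proposition. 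It is cleanest to phrase the answer as: define $[\underline{r}] \,\natural\, [\underline{s}] = \sum_{|P|=n_1} [\underline{r}\,\natural_P\,\underline{s}]$ and extend bilinearly; then $\mathcal{G}(M_1\frp M_2)=\mathcal{G}(M_1)\,\natural\,\mathcal{G}(M_2)$. Since the special cases $U_{1,1}\frp M$ and $M\frp U_{0,1}$ are the free coextension and free extension, one can sanity-check $\natural$ against Proposition~\ref{prop:fext}.

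I expect the main obstacle to be pinning down the combinatorics of $\natural_P$ precisely: one must track which of the two expressions in the minimum of (\ref{eq:frprk}) is active at each prefix and verify that the crossover happens cleanly (once the $E_2$-part on a prefix has full rank $r_2$, the first term governs; before a certain point the second term governs), so that the next rank increment is $1$ exactly when either we are adding an $E_1$-element that is independent in $M_1$ over what came before (while the first term is still not active) or we are adding an $E_2$-element while the second term is active, and analogously on the other side of the crossover. Handling loops of $M_1$ and $M_2$, and the boundary behavior when one prefix is empty, requires care but no new ideas. An alternative, perhaps shorter, route is to avoid an explicit $\natural_P$ entirely: combine $M_1\frp M_2 = \mathrm{Trun}^{\,n_1-r_1}\bigl((M_1\oplus U_{?}) \cdots\bigr)$-type identities with the direct-sum formula (Proposition~\ref{prop:Gdirect}) and the truncation formula (Proposition~\ref{prop:trun}); since Crapo--Schmitt express the free product via a sequence of free extensions/coextensions on one side only when one factor is a single point, this reduction is not immediate in general, so I would fall back on the direct permutation-counting argument above as the primary plan.
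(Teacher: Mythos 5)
Your proposal is correct and is essentially the paper's own argument: both decompose each permutation of $E_1\cup E_2$ uniquely as a shuffle $(P,\pi_1,\pi_2)$ and observe, via the rank formula (\ref{eq:frprk}), that the rank sequence of the shuffle is determined by $P$ and the rank sequences $\underline{r}(\pi_1)$, $\underline{r}(\pi_2)$ alone, since $r_1(X_1)$ and $r_2(X_2)$ on each prefix are just counts of $1$'s in initial segments. The ``crossover'' bookkeeping you anticipate as the main obstacle is actually unnecessary: the formula gives the rank of every prefix directly as a minimum of two quantities computable from the triple, so the increments follow with no case analysis.
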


\begin{proof}
  Let $E_1 = \{1,2,\ldots, m\}$ and $E_2 = \{m+1,m+2, \ldots, m+n\}$.
  Let $[\underline{r}_i]$ be symbols that occur in $\mathcal{G}(M_i)$,
  so $\underline{r}_i$ is the rank sequence of a permutation $\pi_i$
  on $E_i$.  While $\mathcal{G}(M_i)$ does not give $\pi_i$, we can
  calculate the rank sequence of a shuffle
  $\pi=\mathrm{sh}(\pi_1,\pi_2;P)$ from the triple
  $(\underline{r}_1,\underline{r}_2,P)$.  Given $j$ with $1 \leq j
  \leq m+n$, let $X_1 = \{\pi(1),\pi(2),\ldots, \pi(j)\} \cap E_1$ and
  $X_2 = \{\pi(1),\pi(2),\ldots, \pi(j)\} \cap E_2$, so the set
  $\{\pi(1),\pi(2),\ldots, \pi(j)\}$ is the disjoint union of $X_1$
  and $X_2$. Note that $|X_1| = |P \cap \{1,2,\ldots,j\}|$ and $|X_2|
  = j - |X_1|$, and that $r_i(X_i)$ is the number of $1$'s in the
  first $|X_i|$ positions in $\underline{r}_i$. By equation
  (\ref{eq:frprk}), the rank of $\{\pi(1),\pi(2), \ldots,\pi(j)\}$ in
  $M_1 \frp M_2$ can be found from $r_1(E_1)$, $r_2(X_2)$, $r_1(X_1)$,
  and $|X_2|$, and hence, from $(\underline{r}_1, \underline{r}_2,
  P)$. Applying this procedure starting from $j=1$ yields the rank
  sequence of $\mathrm{sh}(\pi_1,\pi_2;P)$ from the triple
  $(\underline{r}_1, \underline{r}_2, P)$.

  Each permutation of $\{1,2,\ldots,m+n\}$ has exactly one
  representation as a shuffle of permutations $\pi_i$ of $E_i$, so the
  multiset of rank sequences $\underline{r}(\pi)$, over all
  permutations $\pi$ $\{1,2,\ldots,m+n\}$, can be obtained by
  calculating the multiset of rank sequences obtained from the triples
  $(\underline{r}_1, \underline{r}_2, P)$, where $\underline{r}_i$
  ranges over the multiset of rank sequences that occur in
  $\mathcal{G}(M_i)$ and $P$ ranges over all $m$-subsets of
  $\{1,2,\ldots,m+n\}$. In this way we obtain $\mathcal{G}(M_1 \frp
  M_2)$ from $\mathcal{G}(M_1)$ and $\mathcal{G}(M_2)$.
\end{proof}

\begin{ex}\label{frpranksequence}  
  Let $M_1 = U_{1,2}\oplus U_{1,1}$ and $M_2 =U_{1,3}\oplus U_{1,2}$
  on disjoint sets.  Then, $\mathcal{G}(M_1) = 4\,[110]+ 2\,[101]$ and
  $ \mathcal{G}(M_2) = 72\,[11000]+36\,[10100]+12\,[10010].  $ The
  free product $M_1\frp M_2$ is a rank-$4$ matroid on a set of size
  $8$. By the computation shown in the table below, the triple
  $(101,10010,\{3,4,5\})$ gives the rank sequence $11100010$ in
  $M_1 \frp M_2$. Repeating this procedure $2 \cdot 3 \cdot 56$ times,
  we obtain $\mathcal{G}(M_1 \frp M_2)$.

  \begin{center}
    \begin{tabular}{|c|c|c|c|c|c|}
      \hline
      $j$  \rule{0pt}{10pt} & $|X_1|$ & $|X_2|$ & $r_1(X_1)$ & $r_2(X_2)$ &
      $\min\{r_1(E_1)+r_2(X_2),r_1(X_1)+|X_2|\}$ \\
      \hline
      \hline
      $1$  \rule{0pt}{10pt} & $0$ & $1$ & $0$ & $1$ &
      $\min\{2+1,0+1\}=1$ \\
      \hline
      $2$  \rule{0pt}{10pt} & $0$ & $2$ & $0$ & $1$ &
      $\min\{2+1,0+2\}=2$ \\
      \hline
      $3$  \rule{0pt}{10pt} & $1$ & $2$ & $1$ & $1$ &
      $\min\{2+1,1+2\}=3$ \\
      \hline
      $4$  \rule{0pt}{10pt} & $2$ & $2$ & $1$ & $1$ &
      $\min\{2+1,1+2\}=3$ \\
      \hline
      $5$  \rule{0pt}{10pt} & $3$ & $2$ & $2$ & $1$ &
      $\min\{2+1,2+2\}=3$ \\
      \hline
      $6$  \rule{0pt}{10pt} & $3$ & $3$ & $2$ & $1$ &
      $\min\{2+1,2+3\}=3$ \\
      \hline
      $7$  \rule{0pt}{10pt} & $3$ & $4$ & $2$ & $2$ &
      $\min\{2+2,2+4\}=4$ \\
      \hline
      $8$  \rule{0pt}{10pt} & $3$ & $5$ & $2$ & $2$ &
      $\min\{2+2,2+5\}=4$ \\
      \hline
    \end{tabular}
  \end{center}
\end{ex}

\subsection{$q$-cone}

In his study of tangential blocks, Whittle \cite{qlifts} introduced
$q$-cones (originally called $q$-lifts) of $\GF(q)$-representable
simple matroids.

\begin{dfn}
  Let $M$ be a simple $\GF(q)$-representable rank-$r$ matroid and
  choose a representation of $M$ in the rank-$(r+1)$ projective
  geometry $\PG(r,q)$, that is, a set $E$ of points in $\PG(r,q)$ so
  that the restriction $\PG(r,q)|E$ is isomorphic to $M$.  Let
  $A \mapsto \overline{A}$ be the closure operator of $\PG(r,q)$.
  Choose a point $a$ in $\PG (r,q)$ not in the linear hyperplane
  $\overline{E}$ and let $E'$ be the union
  \[
  \bigcup_{p\in E} \overline{\{a,p\}}.
  \]
  The matroid $\PG(r,q)|E'$ is the \emph{$q$-cone} of $M$ with
  \emph{base} $E$ and \emph{apex} $a$.
\end{dfn}

A $\GF(q)$-representable matroid $M$ may have
  inequivalent representations, so different choices of $E$ may yield
  non-isomorphic $q$-cones of $M$: Oxley and Whittle \cite{nonunique}
gave examples of matroids with inequivalent representations that yield
non-isomorphic $q$-cones.  However, using a formula for the
characteristic polynomial of $q$-cones due to Kung \cite[Section
8.6]{critical}, Bonin and Qin \cite{qcones} showed that the Tutte
polynomial of a $q$-cone of $M$ can be calculated from the Tutte
polynomial of $M$ and so depends only on $M$.  We next treat a similar
result for the $\mathcal{G}$-invariant.

\begin{prop}
  Let $M$ be a simple $\GF(q)$-representable matroid.  The catenary
  data of a $q$-cone $M'$ of $M$ can be calculated from the catenary
  data of $M$.  Thus, the $\mathcal{G}$-invariant of a $q$-cone $M'$
  depends only on $\mathcal{G}(M)$ (not on $M$ or the representation).
\end{prop}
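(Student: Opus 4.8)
\medskip

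\noindent\textbf{Proof proposal.}
The plan is to write the catenary data of $M'$ as an explicit integer combination of the catenary data of $M$, with coefficients depending only on $q$ and on $r=r(M)$ and $n=|E(M)|$; the statement about the $\mathcal{G}$-invariant then follows from Theorem~\ref{thm:catdata}. Fix a representation realising $M'$: a set $E$ of points of $\PG(r,q)$ with $\cl(E)=\overline E$ a hyperplane and $\PG(r,q)|E\cong M$, an apex $a\notin\overline E$, and $E'=\bigcup_{p\in E}\overline{\{a,p\}}$, so that $M'=\PG(r,q)|E'$ is simple of rank $r+1$ with $|E'|=1+q|E|$. The key tool is the projection $\rho\colon E'\setminus\{a\}\to E$ sending $x$ to the point of $E$ on the line $\overline{\{a,x\}}$; for a flat $G$ of $M$ write $\mathrm{cone}_a(G)=\{a\}\cup\rho^{-1}(G)$, a set of size $1+q|G|$.

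We would first record two structural facts about the projective picture (this is the geometry underlying Kung's characteristic-polynomial formula used by Bonin and Qin). (i) $M'/a$ is the matroid obtained from $M$ by blowing up each element to a parallel class of size $q$ along $\rho$; consequently $G\mapsto\mathrm{cone}_a(G)$ is a bijection from the flats of $M$ onto the flats of $M'$ containing $a$, with $r_{M'}(\mathrm{cone}_a(G))=r_M(G)+1$. (ii) If $Y$ is a flat of $M'$ with $a\notin Y$, then $Z:=\cl_M(\rho(Y))$ is a flat of $M$ with $r_M(Z)=r_{M'}(Y)$; working inside the sub-geometry $\overline{\{a\}\cup Z}\cong\PG(r_M(Z),q)$—in which $M'|\mathrm{cone}_a(Z)$ is itself a $q$-cone of $M|Z$—the flat $Y$ equals $V\cap E'$ for a unique projective hyperplane $V$ of $\overline{\{a\}\cup Z}$ missing $a$; and, crucially, any such $V$ meets each line $\overline{\{a,p\}}$ with $p\in Z$ in exactly one point and meets no other line through $a$, whence $|Y|=|Z|$, independent of $V$. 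Establishing this last ``size miracle'' is the step I expect to require the most care.

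Next we would decompose an arbitrary flag $(Y_0,\ldots,Y_{r+1})$ of $M'$. Let $i$ be least with $a\in Y_i$. For $j\ge i$, fact (i) gives $Y_j=\mathrm{cone}_a(Z_{j-1})$ for a rank-$(j-1)$ flat $Z_{j-1}$ of $M$; for $j<i$ set $Z_j=\cl_M(\rho(Y_j))$, a rank-$j$ flat by fact (ii). One checks the two definitions of $Z_{i-1}$ agree and that $Z_0\subset Z_1\subset\cdots\subset Z_r=E$ is a flag of $M$, obtaining a bijection between flags of $M'$ and triples: a breakpoint $i\in\{1,\ldots,r+1\}$, a flag $(Z_j)$ of $M$, and a chain $Y_0\subset\cdots\subset Y_{i-1}$ of $a$-avoiding flats of $M'$ with $\cl_M(\rho(Y_j))=Z_j$ and $Y_{i-1}$ a copoint of $\mathrm{cone}_a(Z_{i-1})$. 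By fact (ii) such a chain corresponds to a chain $V_0\subset\cdots\subset V_{i-1}$ in which $V_j$ is a hyperplane of $\overline{\{a\}\cup Z_j}$ missing $a$; a short dimension count shows $V_{j-1}$ is forced to equal $V_j\cap\overline{\{a\}\cup Z_{j-1}}$, so the chain is determined by $V_{i-1}$, and the number of admissible $V_{i-1}$ (hyperplanes of $\PG(i-1,q)$ missing a fixed point) is $q^{i-1}$. Finally, combining the size miracle with $|\mathrm{cone}_a(G)|=1+q|G|$ shows that if $(Z_j)$ has composition $(d_0,d_1,\ldots,d_r)$ then every flag of $M'$ arising with breakpoint $i$ has the single composition
\[
\Phi_i(d_0,\ldots,d_r)=\bigl(d_0,\ldots,d_{i-1},\ 1+(q-1)(d_1+\cdots+d_{i-1}),\ q d_i,\ q d_{i+1},\ldots,q d_r\bigr).
\]

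Summing over the bijection then gives
\[
\nu(M';c_0,\ldots,c_{r+1})=\sum_{i=1}^{r+1} q^{\,i-1}\sum_{(d)\,:\,\Phi_i(d)=(c_j)}\nu(M;d_0,\ldots,d_r),
\]
so the catenary data of $M'$—and hence, by Theorem~\ref{thm:catdata}, its $\mathcal{G}$-invariant—is determined by the catenary data of $M$ together with $q$, $r$, and $n$, independently of $M$ and of the representation. The only delicate point is fact (ii) of the second paragraph: matching the $a$-avoiding flats of a $q$-cone with the $a$-avoiding hyperplanes of the ambient projective geometry and, above all, showing their sizes depend only on $|\cl_M(\rho(Y))|$. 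Once that is in hand, the flag decomposition, the count $q^{i-1}$, and the composition formula $\Phi_i$ are routine bookkeeping.
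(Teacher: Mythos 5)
Your proposal is correct and follows essentially the same route as the paper: you decompose the flags of the $q$-cone according to the least index at which the apex appears (the paper's ``jump''), project onto flags of $M$, show that each projected flag with jump $j$ has exactly $q^{j-1}$ lifts, and apply the same composition transformation (your $\Phi_i$ agrees with the paper's formula, since $a_0=0$ for a simple matroid). The only cosmetic difference is that you parametrize the lifts below the jump by an apex-avoiding hyperplane of $\overline{\{a\}\cup Z_{i-1}}$, whereas the paper parametrizes them by choosing points $b_i'\in\overline{\{a,b_i\}}-\{a\}$ over a fixed ordered basis of the projected flag; both yield the count $q^{j-1}$ and the same final recursion for the catenary data.
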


\begin{proof}
  Let $M'$ be the $q$-cone of $M$ with base $E$ and apex $a$.
  Identify $M$ with the restriction $\PG(r,q)|E$.  For a flag
  \[
  \emptyset = Y_0 \subset Y_1 \subset Y_2 \subset \cdots \subset
  Y_{r-1} \subset Y_r \subset Y_{r+1} = E'
  \]
  of $M'$, let
  \[
  X_i = \cl_{M'}(Y_i\cup a)\cap E.
  \]
  Since $E$ is closed in $M'$, the set $X_i$ is a flat of $M$.  The
  \emph{jump} of the flag $(Y_i)$ is the least $j$ with $a \in Y_j$.
  It follows that $X_{j-1} = X_j$ and that $(X_0,X_1,\ldots,
  X_{j-1},X_{j+1}, \ldots , X_{r+1})$ is a flag in $M$, which we call
  the \emph{projection of $(Y_i)$ onto $M$.}

  Given a flag $(X_i)$ in $M$ with composition $(a_0,a_1,\ldots,a_r)$,
  we obtain all flags $(Y_i)$ in $M'$ with projection $(X_i)$ and jump
  $j$ as follows.  For a fixed ordered basis $(b_1,b_2,\ldots,b_r)$ of
  $M$ with $X_i = \cl_M(\{b_1,b_2, \ldots,b_i\})$, choose a sequence
  $b'_1,b'_2,\ldots,b'_{j-1}$ with $b'_i \in \overline{\{a,b_i\}}-a$,
  and set
  \[
  Y_i = \left\{ \begin{array}{ll} \cl_{M'}(\{b'_1,b'_2, \ldots,b'_i\})
      & \,\,\text{if }\, 0 \leq i \leq j-1,
      \\
      \cl_{M'}(X_{i-1} \cup a) & \,\, \text{if }\, j \leq i \leq r+1.
    \end{array}\right. 
  \]
  It is easy to check that all flags in $M'$ with projection $(X_i)$
  and jump $j$ arise exactly once this way, and, due to the choice of
  $b'_1,b'_2,\ldots,b'_{j-1}$, there are $q^{j-1}$ such flags.  Also,
  \[|Y_i| = \left\{ \begin{array}{ll} |X_i| & \,\,\mathrm{if} \,\,0
      \leq i \leq j-1,
      \\
      q|X_{i-1}|+1 & \,\, \mathrm{if}\,\, j \leq i \leq r+1,
    \end{array}\right. 
  \]
  so a flag $(X_i)$ in $M$ with composition $(a_0,a_1,\ldots,a_r)$ is
  the projection of $q^{j-1}$ flags in $M'$ with jump $j$, and these
  flags all have composition
  \[
  (a_0,a_1,\ldots,a_{j-1},(a_0 + a_1 + \cdots + a_{j-1})(q-1)+1, a_j
  q, a_{j+1}q,\ldots,a_rq).
  \]
  Thus, we get the catenary data of $M'$ from that of $M$.
\end{proof} 

\begin{ex} 
  The matroids $M$ and $N$ in Figure \ref{fig:sameconfig} are
  representable over the field $\GF(q)$ whenever $q$ is a prime power
  with $q>3$. They have the same catenary data, so their $q$-cones
  have the same catenary data.  The catenary data of $M$ is
  $\nu(M;0,1,2,3)=6$ and $\nu(M;0,1,1,4)=18$. Each of the six flags
  with composition $(0,1,2,3)$ is the projection of
  \begin{enumerate}
  \item one flag with jump $1$ and composition $(0,1,q,2q,3q)$,
  \item $q$ flags with jump $2$ and composition $(0,1,q,2q,3q)$,
  \item $q^2$ flags with jump $3$ and composition $(0,1,2,3q-2,3q)$,
  \item $q^3$ flags with jump $4$ and composition $(0,1,2,3,6q-5)$.
  \end{enumerate}
  This and a similar calculation for the other $18$ flags of $M$ give
  the catenary data for a $q$-cone $M'$:
  \begin{align*}
    \nu(M';0,1,q,2q,3q)=\, 6(1+q),\,\, & \,\,\nu(M';0,1,q,q,4q)=\, 18(1+q),\\ 
    \nu(M';0,1,2,3q-2,3q)=\, 6q^2,\,\, & \,\,\nu(M';0,1,1,2q-1,4q)=\,18q^2,\\
    \nu(M';0,1,2,3,6q-5)=\, 6q^3, \,\, & \,\, \nu(M';0,1,1,4,6q-5)=\, 18q^3.
  \end{align*}
\end{ex}

\subsection{Circuit-hyperplane relaxation}

Recall that if $X$ is a both a circuit and a hyperplane
(that is, copoint) of $M$, then the corresponding
\emph{circuit-hyperplane relaxation} is the matroid on the same set
whose bases are those of $M$ along with $X$.  The next result is easy
and extends a well-known result about Tutte polynomials.

\begin{prop}
  If $M'$ is obtained from the rank-$r$ matroid $M$ on an $n$-element
  set by relaxing a circuit-hyperplane, then
  $$\mathcal{G}(M')-\mathcal{G}(M) = r!\,(n-r)!\,([1^r0^{n-r}] - [1^{r-1}
  010^{n-r-1}]).$$
  Equivalently, the only compositions for which $\nu(M)$ and $\nu(M')$
  differ are the following:
  $$\nu(M';0,1,\ldots,1,n-r+1) = \nu(M;0,1,\ldots,1,n-r+1)+r!$$
  and
  $$\nu(M';0,1,\ldots,1,2,n-r) = \nu(M;0,1,\ldots,1,2,n-r)-\frac{r!}{2}.$$
\end{prop}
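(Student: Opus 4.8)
The plan is to compare $\mathcal{G}(M)$ and $\mathcal{G}(M')$ permutation by permutation, exploiting the fact that a circuit-hyperplane relaxation changes the rank function in only one place. First I would record two structural facts. Since the circuit-hyperplane $X$ of the rank-$r$ matroid $M$ has rank $r-1$, it has exactly $r$ elements; and the relaxation changes the rank of no set other than $X$ itself, with $r_{M'}(X)=r$ while $r_M(X)=r-1$ (this is the standard description of relaxation, since the independent sets of $M'$ are exactly those of $M$ together with $X$, every proper subset of $X$ already being independent in $M$). It follows that for a permutation $\pi$ of $\{1,2,\dots,n\}$ the rank sequences $\underline{r}(\pi)$ computed in $M$ and in $M'$ agree unless $\{\pi(1),\dots,\pi(j)\}=X$ for some $j$; as $|X|=r$, this forces $j=r$ and $\{\pi(1),\dots,\pi(r)\}=X$, and there are precisely $r!\,(n-r)!$ such permutations.

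For such a $\pi$ I would compute the two rank sequences directly. In $M$, the first $r-1$ elements form an independent subset of the circuit $X$, so the sequence opens $1^{r-1}$; then $\{\pi(1),\dots,\pi(r)\}=X$ has rank $r-1$, contributing a $0$; then $\pi(r+1)\notin X$ raises the rank to $r$ since $X$ is a hyperplane, contributing a $1$; the remaining entries are $0$. Thus $\underline{r}(\pi)=1^{r-1}010^{n-r-1}$ in $M$. In $M'$ the first $r$ elements of $X$ now form a basis, so the ranks climb $1,2,\dots,r$ and then remain at $r$, giving $\underline{r}(\pi)=1^r0^{n-r}$. Summing $[\underline{r}(\pi)]$ over all $n!$ permutations and cancelling the identical contributions of those that do not list the elements of $X$ first yields
\[
\mathcal{G}(M')-\mathcal{G}(M)=r!\,(n-r)!\,\bigl([1^r0^{n-r}]-[1^{r-1}010^{n-r-1}]\bigr),
\]
which is the first assertion.

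To obtain the statement about catenary data I would re-express this difference in the $\gamma$-basis. The two symbols involved correspond to the maximum $(n,r)$-composition $(0,1,\dots,1,n-r+1)$ (namely $[1^r0^{n-r}]$) and the unique composition it covers, $(0,1,\dots,1,2,n-r)$, and a short computation from the definition of $\gamma$ gives
\[
\gamma(0,1,\dots,1,n-r+1)=(n-r+1)!\,[1^r0^{n-r}]
\]
and
\[
\gamma(0,1,\dots,1,2,n-r)=2(n-r)(n-r)!\,[1^r0^{n-r}]+2(n-r)!\,[1^{r-1}010^{n-r-1}].
\]
Substituting these back shows that
\[
\mathcal{G}(M')-\mathcal{G}(M)=r!\,\gamma(0,1,\dots,1,n-r+1)-\frac{r!}{2}\,\gamma(0,1,\dots,1,2,n-r).
\]
Since the $\gamma(a_0,\dots,a_r)$ form a basis of $\mathcal{G}(n,r)$ and, by Theorem~\ref{thm:catdata}, the coordinates of $\mathcal{G}(M')-\mathcal{G}(M)$ in this basis are precisely the differences $\nu(M';a_0,\dots,a_r)-\nu(M;a_0,\dots,a_r)$, comparing coefficients shows that this difference equals $r!$ when $(a_0,\dots,a_r)=(0,1,\dots,1,n-r+1)$, equals $-r!/2$ when $(a_0,\dots,a_r)=(0,1,\dots,1,2,n-r)$, and equals $0$ otherwise, as claimed.

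I expect no genuine obstacle here; as the paper notes, the result is easy. The only steps needing any care are the verification that the relaxation alters the rank of the single set $X$ and the two $\gamma$-expansions above, both routine. It is worth remarking that a purely flag-theoretic derivation of the $\nu$-equations --- counting which flags of $M$ are destroyed and which flags of $M'$ are created --- is less convenient, because the flats of $M$ lying inside $X$ depend on the ambient matroid; working with permutations and rank sequences sidesteps this entirely.
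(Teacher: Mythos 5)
Your proof is correct: the permutation-by-permutation comparison (only the $r!\,(n-r)!$ permutations listing $X$ first change, from $[1^{r-1}010^{n-r-1}]$ to $[1^r0^{n-r}]$) and your two $\gamma$-expansions, which check against the paper's examples such as $\gamma(0,1,1,4)=24[111000]$ and $\gamma(0,1,2,3)=36[111000]+12[110100]$, give exactly the stated symbol-basis and catenary-data formulas. The paper omits the proof entirely (calling the result easy), and your argument is the natural one its framework intends, so there is nothing further to compare.
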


\section{Deriving matroid parameters from the
  $\mathcal{G}$-invariant}\label{sec:parameters}

Since the Tutte polynomial is a specialization of the
$\mathcal{G}$-invariant, any matroid parameter that is derivable from
the Tutte polynomial is derivable from the $\mathcal{G}$-invariant or
catenary data.  An easy but important example is the number $b(M)$ of
bases of a matroid $M$.  Indeed, the coefficient of the maximum symbol
$[1^r0^{n-r}]$ in $\mathcal{G}(M)$ equals $r!(n-r)!b(M)$.
Alternatively, by Lemma \ref{lem:countbybases},
$$b(M) =
\frac{1}{r!}\sum_{(a_0,a_1,\ldots,a_r)}\nu(M;a_0,a_1,\ldots,a_r)\,\,
a_1a_2\cdots a_r.$$
In this section, we identify some of the parameters of a matroid that
are derivable from the $\mathcal{G}$-invariant but not from the Tutte
polynomial.

The result motivating this section is that the number $f_k(s)$ of
flats of rank $k$ and size $s$ can be derived from the
$\mathcal{G}$-invariant.  As the matroid $M_1$ in Figure
\ref{fig:2exp} has two $2$-point lines whereas $M_2$ has three, but
$T(M_1;x,y) = T(M_2;x,y)$, the Tutte polynomial does not determine all
numbers $f_k(s)$. However, for a rank-$r$ matroid and a given rank
$k$, the maximum size $m_k$ of a rank-$k$ flat and the number
$f_k (m_k)$ is derivable from $T(M;x,y)$.  Indeed, $m_k$ is the
greatest integer $m$ for which the monomial $(x-1)^{r-k} (y-1)^{m-k}$
occurs in $T(M;x,y)$ with non-zero coefficient and $f_k(m_k)$ is that
coefficient. As shown in Section 5 of \cite{MR2768784}, $f_k (s)$ is
derivable from $T(M;x,y)$ for each $s$ with $m_{k-1}<s\leq m_k$.

Let $M$ be a rank-$r$ matroid on $n$ elements.  Fix integers $h$ and
$k$ with $0 \leq h \leq k \leq r$, and let
$(s_h, s_{h+1}, \ldots, s_k)$ be a sequence of positive integers.  For
$h< j \leq k$, let $s_j^{\prime} = s_{j} - s_{j-1}$.  An
\emph{$(h,k;s_h, s_{h+1}, \ldots, s_k)$-chain} is a (saturated) chain
$(X_h, X_{h+1}, \ldots , X_k)$ of flats such that $r(X_j) = j$ and
$|X_j| = s_j$ for $h \leq j \leq k$.  For a rank-$h$ flat $X$ with
$|X|=s_h$ and a rank-$k$ flat $Y$ with $|Y|=s_k$,
let $F_{X,Y}(s_h, s_{h+1}, \ldots, s_k)$ be the
number of $(h,k;s_h, s_{h+1}, \ldots, s_k)$-chains
$(X, X_{h+1}, \ldots ,X_{k-1},Y)$ starting with the flat $X$ and
ending with the flat $Y$.  Finally, let
\[
F_{h,k} (s_h, s_{h+1}, \ldots, s_k) = \sum_{|X| = s_h, |Y| = s_k}
F_{X,Y}(s_h, s_{h+1}, \ldots, s_k),
\]
so $F_{h,k} (s_h, s_{h+1},\ldots, s_k)$ is the number of
$(h,k;s_h, s_{h+1}, \ldots, s_k)$-chains.  When $h=k$, each chain
collapses to a rank-$k$ flat, and $f_k(s) = F_{k,k}(s)$, where
$s=s_k$.

In the next lemma, we shall think of elements in the $\gamma$-basis as
variables and linear combinations in $\mathcal{G}(n,r)$ as polynomials
in those variables, so one can multiply them as polynomials.

\begin{lemma}\label{GresX} 
  Let $0 \leq h \leq k \leq r$. Then
  \begin{align*}\label{eq:GresX}
    & \sum\limits_{\substack{ X \,  \mathrm{a}\, \mathrm{flat},
       \,r(X) = h, \, |X| = 
       s_h \\ Y \,  \mathrm{a}\, \mathrm{flat}, \,r(Y) = k, \, |Y| =
    s_k}}  \mathcal{G}(M|X)
    F_{X,Y}(s_h, s_{h+1}, \ldots, s_{k-1},s_k) 
 \mathcal{G}(M/Y)   
    \\   
    & \quad = \sum_{(a_j)}  \nu (M;a_0,a_1, \ldots, a_h, s'_{h+1},
       \ldots,s'_{k}, a_{k+1},\ldots,a_r) \gamma(a_0,a_1, \ldots,
       a_h)\gamma(0,a_{k+1},\ldots,a_r),  \notag 
  \end{align*}  
  where the sum ranges over all $(n,r)$-compositions
  $$(a_0,\ldots,a_h, s'_{h+1},
  \ldots, s'_{k-1},s'_{k}, a_{k+1},\ldots,a_r)$$
  such that $a_0+a_1+\cdots+a_h=s_h$ and
  $a_{k+1}+a_{k+2}+ \cdots+a_r = n-s_k$.
\end{lemma}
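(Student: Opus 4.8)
The plan is to prove the identity by a careful combinatorial decomposition of the permutations of $E$, mirroring the proof of Theorem~\ref{thm:catdata} but now tracking an intermediate chain of flats rather than a full flag. The right-hand side should be interpreted as a sum over flags of $M$ that pass through a rank-$h$ flat of size $s_h$ and a rank-$k$ flat of size $s_k$, with the prescribed sizes $s_{h+1},\ldots,s_{k-1}$ in between; the two $\gamma$-factors on the right then encode the ``lower part'' (below rank $h$, i.e.\ within $M|X$) and the ``upper part'' (above rank $k$, i.e.\ within $M/Y$) of such flags, while the middle portion of the composition is rigidly determined.

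First I would fix a pair $(X,Y)$ with $X$ a rank-$h$ flat of size $s_h$, $Y$ a rank-$k$ flat of size $s_k$, and $X\subseteq Y$, together with an $(h,k;s_h,\ldots,s_k)$-chain $(X,X_{h+1},\ldots,X_{k-1},Y)$ between them. The key observation is that a flag of $M$ that refines this chain is the same data as a flag of $M|X$ (giving $X_0\subset\cdots\subset X_h=X$) together with a flag of $M/Y$ (giving $Y/Y\subset\cdots\subset E/Y$, lifted to flats of $M$ containing $Y$); the middle segment contributes exactly the composition entries $s'_{h+1},\ldots,s'_k$ since $|X_j-X_{j-1}|=s_j-s_{j-1}=s'_j$ for $h<j\le k$. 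I would then invoke Lemma~\ref{gammabasis} on $M|X$ and on $M/Y$ separately. The subtle point is bookkeeping with permutations: a permutation $\pi$ of $E$ whose $\mathrm{flag}(\pi)$ refines the chain restricts to permutations of $X$ and of $E-Y$ in a way that is compatible with $\mathrm{flag}$, but the elements landing in the middle ``new'' positions (one distinguished element per step from $X_j-X_{j-1}$, plus the $s'_j-1$ others from $X_j$) must be threaded in correctly. This is where I expect the main obstacle: one must argue that summing $\sum_{\mathrm{flag}(\pi)=(X_i)}[\underline{r}(\pi)]$ over all flags $(X_i)$ refining the fixed chain factors as $\gamma(a_0,\ldots,a_h)$ (from $M|X$) times the forced middle symbol contribution times $\gamma(0,a_{k+1},\ldots,a_r)$ (from $M/Y$), with the middle part absorbed into the composition index rather than producing a separate $\gamma$-factor. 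Concretely, Lemma~\ref{gammabasis} applied to $M|X$ gives $\gamma(a_0,\ldots,a_h)$ as a linear combination in $\mathcal{G}(s_h,h)$, and applied to $M/Y$ gives $\gamma(0,a_{k+1},\ldots,a_r)$ in $\mathcal{G}(n-s_k,r-k)$; the product of these two in the polynomial-ring sense of the preceding paragraph, with the middle composition fixed, is what appears on the right.

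The remaining step is to sum over the intermediate chains: for fixed endpoints $X,Y$, the number of $(h,k;s_h,\ldots,s_k)$-chains from $X$ to $Y$ is by definition $F_{X,Y}(s_h,\ldots,s_k)$, and since the contribution of each such chain to $\sum_\pi[\underline{r}(\pi)]$ depends only on the sizes (not on the particular flats), summing over chains simply multiplies by $F_{X,Y}$. Then summing over all valid pairs $(X,Y)$ produces the left-hand side $\sum_{X,Y}\mathcal{G}(M|X)\,F_{X,Y}(\cdots)\,\mathcal{G}(M/Y)$, where I use Theorem~\ref{thm:catdata} (equation~(\ref{gamma1})) to recognize $\sum_{\text{flags of }M|X}\gamma(\cdots)=\mathcal{G}(M|X)$ and likewise for $M/Y$. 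On the other side, reorganizing the double sum over flags by the composition of the induced flag of $M$ collects the coefficient $\nu(M;a_0,\ldots,a_h,s'_{h+1},\ldots,s'_k,a_{k+1},\ldots,a_r)$, with the constraints $a_0+\cdots+a_h=s_h$ and $a_{k+1}+\cdots+a_r=n-s_k$ exactly encoding ``$X$ has size $s_h$'' and ``$Y$ has size $s_k$''. Matching the two expansions yields the claimed identity. The only genuinely delicate verification is the factorization of the permutation count in the middle paragraph; once that is in hand, the rest is reindexing.
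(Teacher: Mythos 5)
Your proposal is correct and follows essentially the same route as the paper: decompose each flag of $M$ passing through a rank-$h$ flat $X$ of size $s_h$ and a rank-$k$ flat $Y$ of size $s_k$ with the prescribed intermediate sizes into a flag of $M|X$, a chain counted by $F_{X,Y}(s_h,\ldots,s_k)$, and a flag of $M/Y$, then apply the catenary-data expansion of Theorem \ref{thm:catdata} to $M|X$ and $M/Y$ and sum over the pairs $(X,Y)$. The permutation-threading step you single out as the main obstacle is not actually needed: since the identity is stated with the $\gamma$-basis elements treated as commuting variables and the products taken formally (as the paper notes just before the lemma), the whole argument lives at the level of flags and chain counts, and Lemma \ref{gammabasis} never has to be revisited at the level of permutations of $E$.
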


\begin{proof}

  For a rank-$h$ flat $X$ with $|X|=s_h$ and a rank-$k$ flat $Y$ with
  $|Y|=s_k$, let  $$\nu(M; a_0,a_1, \ldots,a_{h-1}, X,
  s^{\prime}_{h+1},\ldots,s^{\prime}_{k-1}, Y,a_{k+1}, \ldots,a_r)$$
  be the number of chains $(X_i)$ in $M$ satisfying $r(X_j) = j$,
  $|X_0| = a_0$, $|X_j - X_{j-1}| = a_j$ for $1 \leq j < h$ and
  $k< j \leq r$, $X_h = X$, $X_k = Y$, and
  $|X_j - X_{j-1}| = s^{\prime}_j$ for $h < j \leq k$. Then
  \begin{align*}
    &  \nu(M; a_0,a_1, \ldots,a_{h-1}, X,
    s^{\prime}_{h+1},\ldots,s^{\prime}_{k-1},Y,a_{k+1}, \ldots,a_r)  
    \\
    & \qquad   = 
        \nu(M|X; a_0,a_1, \ldots,a_{h-1},a_h) F_{X,Y}(s_h, s_{h+1},
       \ldots, s_{k-1},s_k)\nu(M/Y;0,a_{k+1},\ldots,a_r).  
  \end{align*} 
  Hence,
  \begin{align*}
    &  \sum_{(a_j)}  \nu(M; a_0,a_1, \ldots, a_{h-1},X,s'_{h+1},\ldots,
    s'_{k-1},Y,a_{k+1}, \ldots,a_r)  
    \gamma(a_0,a_1, \ldots,a_h) \gamma(0,a_{k+1},\ldots,a_r) 
    \\
    & \qquad =  \mathcal{G}(M|X) F_{X,Y}(s_h, s_{h+1}, \ldots,
       s_{k-1},s_k) \mathcal{G}(M/Y). 
  \end{align*}  
  Summing over all flats $X$ and $Y$ having the stated rank and size,
  we obtain
  \begin{align*}
    & \sum_{X,Y} \mathcal{G}(M|X) F_{X,Y}(s_h, s_{h+1},
      \ldots, s_k) \mathcal{G}(M/Y)   
    \\
    & \quad = \sum_{(a_j)} \left(\sum_{X,Y} \nu(M; a_0,a_1, \ldots,
      a_{h-1},X,s^{\prime}_{h+1},\ldots,s^{\prime}_{k-1},Y,a_{k+1}, \ldots,a_r)
      \right) 
    \\ 
    & \qquad\qquad\qquad\qquad\qquad\qquad\qquad\qquad       \times
      \gamma(a_0,a_1, \ldots,a_h)    \gamma(0,a_{k+1},\ldots,a_r)  
    \\
    & \quad = \sum_{(a_j)} \nu(M; a_0,a_1, \ldots,
      a_{h-1},a_h,s^{\prime}_{h+1},\ldots,s^{\prime}_{k-1},s'_{k},a_{k+1}, \ldots,a_r) 
    \\
    & \qquad\qquad\qquad\qquad\qquad\qquad\qquad\qquad   \times 
      \gamma(a_0,a_1,\ldots, a_h)\gamma(0,a_{k+1},\ldots,a_r).     
      \qedhere
\end{align*}
\end{proof} 

\begin{prop}\label{saturatedchains}  
  The numbers $F_{h,k} (s_h, s_{h+1}, \ldots, s_k)$ and in particular,
  $f_k(s)$, are derivable from the catenary data.  Also, if
  $f_k(s) = 1$ and $F$ is the unique flat of rank $k$ and size $s$,
  then both $\mathcal{G}(M|F)$ and $\mathcal{G}(M/F)$ can be derived
  from $\mathcal{G}(M)$.
\end{prop}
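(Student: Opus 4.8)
The plan is to extract the quantities $F_{h,k}(s_h,\dots,s_k)$ from the catenary data by means of Lemma~\ref{GresX}, reading off coefficients in a suitable basis. First I would note that, by Theorem~\ref{thm:catdata}, the catenary data and $\mathcal{G}(M)$ carry the same information, so it suffices to work with whichever is more convenient. Applying Lemma~\ref{GresX} with a fixed choice of $h$, $k$, and a fixed sequence $(s_h,s_{h+1},\dots,s_k)$ of sizes produces an element of $\mathcal{G}(n,r)$, namely
\[
\Theta(s_h,\dots,s_k)\;=\;\sum_{(a_j)} \nu(M;a_0,\dots,a_h,s'_{h+1},\dots,s'_k,a_{k+1},\dots,a_r)\,\gamma(a_0,\dots,a_h)\,\gamma(0,a_{k+1},\dots,a_r),
\]
where the sum is over all $(n,r)$-compositions with $a_0+\dots+a_h = s_h$ and $a_{k+1}+\dots+a_r = n-s_k$; this element is manifestly computable from the catenary data of $M$ alone. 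On the other hand Lemma~\ref{GresX} equates $\Theta(s_h,\dots,s_k)$ with $\sum_{X,Y}\mathcal{G}(M|X)\,F_{X,Y}(s_h,\dots,s_k)\,\mathcal{G}(M/Y)$.

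Second, I would recover $F_{h,k}(s_h,\dots,s_k)$ from $\Theta(s_h,\dots,s_k)$. Here the simplest route is to apply the specialization $\gamma(b_0,b_1,\dots,b_\ell)\mapsto 1$ for every argument, i.e.\ sum all coefficients in the $\gamma$-basis expansion of each factor; equivalently one may take the linear functional on $\mathcal{G}(n,r)$ that sends each symbol $[\underline r]$ to $1$, since each $\gamma$ is a positive-integer combination of symbols and in fact $\gamma$ evaluates under $[\underline r]\mapsto 1$ to a definite combinatorial count. The cleanest observation is that summing the coefficients of $\Theta(s_h,\dots,s_k)$ over all $(n,r)$-compositions appearing (equivalently, over all pairs of composition-indices for the two $\gamma$ factors) gives exactly
\[
\sum_{(a_j)} \nu(M;a_0,\dots,a_h,s'_{h+1},\dots,s'_k,a_{k+1},\dots,a_r),
\]
which, by Proposition~\ref{catcurse}-type bookkeeping on the chain $(X_h,\dots,X_k)$, counts precisely the $(h,k;s_h,\dots,s_k)$-chains together with all ways of extending them to full flags below $X_h$ and above $X_k$; dividing by the number $a_1a_2\cdots a_h\cdot a_{k+1}\cdots a_r$ of such extensions — as in Lemma~\ref{lem:countbybases} applied to $M|X$ and to $M/Y$ — is awkward because it depends on the composition, so instead I would weight the sum. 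Concretely, recall from Lemma~\ref{lem:countbybases} that a flag of $M|X$ with composition $(a_0,\dots,a_h)$ carries $a_1\cdots a_h$ ordered bases, and similarly for $M/Y$; hence applying to $\Theta(s_h,\dots,s_k)$ the linear map that sends $\gamma(a_0,\dots,a_h)\mapsto a_1\cdots a_h$ and $\gamma(0,a_{k+1},\dots,a_r)\mapsto a_{k+1}\cdots a_r$ (both of which are well-defined linear functionals on $\mathcal{G}(n,r)$ since each $\gamma$ depends only on its composition) yields
\[
\sum_{X,Y} b(M|X)\,F_{X,Y}(s_h,\dots,s_k)\,b(M/Y)\,h!\,(r-k)!,
\]
which still mixes in $b(M|X)$ and $b(M/Y)$. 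To disentangle, the right normalization is to divide by the appropriate falling-factorial/loop factors inside the $\gamma$-basis expansion before summing, using that $\gamma(a_0,\dots,a_h)$ has leading symbol $[1^h 0^{s_h-h}]$ with coefficient $h!(s_h-h)! $ and the analogous statement for $\gamma(0,a_{k+1},\dots,a_r)$; thus picking off in $\Theta(s_h,\dots,s_k)$ the total coefficient of products $[1^h0^{\cdots}]\cdot[0\,1^{r-k}0^{\cdots}]$ over all admissible splittings of the size data recovers $\sum_{X,Y}F_{X,Y}=F_{h,k}(s_h,\dots,s_k)$ up to the explicit constant $h!(s_h-h)!(r-k)!(n-s_k-(r-k))!$. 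Specializing to $h=k$ collapses the chain to a single flat and gives $f_k(s)=F_{k,k}(s)$.

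Third, for the final assertion, suppose $f_k(s)=1$ with $F$ the unique flat of rank $k$ and size $s$. Taking $h=k$ in Lemma~\ref{GresX}, the left-hand side becomes simply $\mathcal{G}(M|F)\cdot F_{F,F}(s)\cdot \mathcal{G}(M/F) = \mathcal{G}(M|F)\,\mathcal{G}(M/F)$ (since $F_{F,F}(s)=1$), while the right-hand side is an explicit element of $\mathcal{G}(k,s)$-type tensored with $\mathcal{G}(n-s,r-k)$-type data, computable from the catenary data of $M$. So $\mathcal{G}(M)$ determines the product $\mathcal{G}(M|F)\,\mathcal{G}(M/F)$; it remains to show each factor is individually determined. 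For this I would use Theorem~\ref{GtoT}: the product determines $T(M|F;x,y)\,T(M/F;x,y)$, but more usefully, $\mathcal{G}(M|F)$ lives in $\mathcal{G}(s,k)$ and $\mathcal{G}(M/F)$ in $\mathcal{G}(n-s,r-k)$, so their sizes $s$ and $n-s$, and ranks $k$ and $r-k$, are known; since $\mathcal{G}(M|F)$ is a sum of $s!$ monomials $[\underline r]$ (with $|\underline r|=s$) and $\mathcal{G}(M/F)$ a sum of $(n-s)!$ monomials, and since in a shuffle-type product the bidegree of each factor is fixed, one recovers each factor by projecting the product onto the appropriate graded pieces — formally, $\mathcal{G}(M|F)$ and $\mathcal{G}(M/F)$ are determined because the catenary data of $M|F$ equals $\big(\nu(M;a_0,\dots,a_k, *,\dots,*)$ restricted to flags through $F\big)$, which the left side of Lemma~\ref{GresX} isolates once $F$ is the only flat of its rank and size. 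I expect the main obstacle to be this last disentangling step: extracting the individual factors $\mathcal{G}(M|F)$, $\mathcal{G}(M/F)$ from their product, which requires observing that the $\gamma$-basis expansion on the right-hand side of Lemma~\ref{GresX} is already "factored" — each term is $\gamma(a_0,\dots,a_h)\gamma(0,a_{k+1},\dots,a_r)$ — so that collecting, for each fixed first factor $\gamma(a_0,\dots,a_k)$, the coefficient vector in the second set of variables gives $\nu(M/F;0,a_{k+1},\dots,a_r)$ directly (normalized by $\nu(M|F;a_0,\dots,a_k)$), and symmetrically; care is needed to check this normalization is legitimate, i.e.\ that at least one relevant $\nu(M|F;\cdot)$ is nonzero, which holds because $M|F$ has at least one flag.
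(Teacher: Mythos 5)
Your overall skeleton (Lemma \ref{GresX} plus a suitable linear functional) is the paper's, but the step where you actually extract $F_{h,k}$ fails. First, the functional sending each $\gamma$-basis element to $1$ is \emph{not} equivalent to the functional sending each symbol $[\underline r]$ to $1$: under $[\underline r]\mapsto 1$ each $\gamma(a_0,\ldots,a_h)$ evaluates to a composition-dependent count and $\mathcal{G}(N)\mapsto |E(N)|!$, while under $\gamma\mapsto 1$ one gets the number of flags of $N$, a matroid-dependent quantity. Second, the extraction you finally commit to, reading off the coefficients of the leading symbols, is incorrect: the coefficient of $[1^h0^{\,s_h-h}]$ in $\gamma(a_0,\ldots,a_h)$ is $a_1a_2\cdots a_h\,(s_h-h)!$, not $h!(s_h-h)!$ (compare $\gamma(0,1,2,3)=36\,[111000]+12\,[110100]$), so the leading coefficient of $\mathcal{G}(M|X)$ is $h!(s_h-h)!\,b(M|X)$ and your procedure returns $h!(s_h-h)!(r-k)!(n-s_k-(r-k))!\sum_{X,Y}b(M|X)\,F_{X,Y}\,b(M/Y)$ --- the same basis-contaminated sum you had already rejected, not a constant multiple of $F_{h,k}$. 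What is needed is a specialization whose value on $\mathcal{G}(M|X)$ depends only on $|X|$, and that is exactly the paper's move: send \emph{every symbol} $[\underline r]$ to $1$, so $\mathcal{G}(M|X)=s_h!$, $\mathcal{G}(M/Y)=(n-s_k)!$, the left side of Lemma \ref{GresX} becomes $s_h!\,(n-s_k)!\,F_{h,k}(s_h,\ldots,s_k)$, and the right side is an explicit number computable from the catenary data because each $\gamma$ evaluates to a number depending only on its composition.

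For the second assertion your route ($h=k$) forces you to split the genuine product $\mathcal{G}(M|F)\cdot\mathcal{G}(M/F)$, i.e.\ the rank-one array of coefficients $\nu(M|F;\alpha)\,\nu(M/F;\beta)$. Such an array determines the two vectors only up to a positive scalar, and your proposed normalization, dividing by $\nu(M|F;a_0,\ldots,a_k)$, is circular: that number is part of what you are trying to compute, and knowing that some $\nu(M|F;\cdot)$ is nonzero does not give its value. The paper avoids the splitting problem entirely by choosing one end of the chain to be a flat whose invariant is trivially known: to get $\mathcal{G}(M|F)$ take $X=F$ and $Y=E(M)$ in Lemma \ref{GresX} (then $\mathcal{G}(M/Y)$ is trivial and the left side is $\mathcal{G}(M|F)$ times an $F_{h,r}$-number already computed in the first assertion), and dually take $X=\cl(\emptyset)$, $Y=F$ for $\mathcal{G}(M/F)$. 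Equivalently, since $F$ is the unique flat of its rank and size, the numbers $F_{k,r}(s,s_{k+1},\ldots,n)$ and $F_{0,k}(s_0,\ldots,s)$ \emph{are} the catenary data of $M/F$ and $M|F$. Your scalar ambiguity could be repaired this way, but as written this step is a gap, and it also leans on the first part, which your argument does not correctly establish.
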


\begin{proof}  
  For the first assertion, specialize all the symbols in the equation
  in Lemma \ref{GresX} to $1$. Then we have $\mathcal{G}(M|X) = s_h!$
  and $\mathcal{G}(M/Y) = (n-s_k)!$, and the sum on the left equals
  $s_h! (n-s_k)! F_{h,k}(s_h, s_{h+1}, \ldots, s_k)$. Since the sum on
  the right can be calculated from the catenary data of $M$, we can
  derive $F_{h,k}(s_h, s_{h+1}, \ldots, s_k)$.  For the second
  assertion, to get $\mathcal{G}(M|F)$, take $X = F$ and $Y=E(M)$; to
  get $\mathcal{G}(M/F)$, take $X = \cl(\emptyset)$ and $Y=F$.
\end{proof}

\begin{cor}\label{cocircuits}
  The number of cocircuits of size $s$, the number of circuits of size
  $s$, and the number of cyclic sets (that is, unions of circuits) of
  size $s$ and rank $j$ are derivable from the catenary
  data.  In particular, one can deduce whether the
    matroid has a spanning circuit, so one can determine whether a
    graph is Hamiltonian from the $\mathcal{G}$-invariant of its cycle
    matroid. 
\end{cor}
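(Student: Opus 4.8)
The plan is to derive each of the listed quantities from the catenary data---equivalently, from the $\mathcal{G}$-invariant, by Theorem~\ref{thm:catdata}---using the machinery of Proposition~\ref{saturatedchains} together with duality (Proposition~\ref{prop:dual}) and the observation that cocircuits and cyclic sets are governed by flats and their complements. First I would handle cocircuits: a set is a cocircuit of $M$ if and only if its complement is a copoint (rank-$(r-1)$ flat) of $M$, so the number of cocircuits of size $s$ equals $f_{r-1}(n-s)$, which is derivable from the catenary data by Proposition~\ref{saturatedchains} (the case $h=k=r-1$). For circuits, pass to the dual: a circuit of $M$ of size $s$ is a cocircuit of $M^*$ of size $s$, and $\mathcal{G}(M^*)$ is obtained from $\mathcal{G}(M)$ by the specialization of Proposition~\ref{prop:dual}, so the number of size-$s$ cocircuits of $M^*$---hence the number of size-$s$ circuits of $M$---is derivable.

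Next I would treat cyclic sets. A cyclic set of $M$ is a union of circuits; the cyclic sets are exactly the complements of the flats of $M^*$, with a cyclic set of rank $j$ and size $s$ in $M$ corresponding to a flat of $M^*$ of rank $(n-r)-(s-j)$ and size $n-s$ (using the standard rank relation $r_{M^*}(E\setminus A)=|E\setminus A|-r(E)+r_M(A)$ applied to $A$ a cyclic set of rank $j$). Thus the count of cyclic sets of given rank and size in $M$ equals a count of flats of given rank and size in $M^*$, which is $f_k(s)$ for $M^*$ in the notation of Proposition~\ref{saturatedchains}, and is therefore derivable from $\mathcal{G}(M^*)$, hence from $\mathcal{G}(M)$.

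Finally, for the ``in particular'' clause: $M$ has a spanning circuit precisely when it has a circuit of size $\geq r+1$ that has rank $r$, equivalently (since a spanning circuit has size exactly $r+1$, as any circuit has size at most $r+1$ only when $M$ has no smaller spanning circuit---more simply, a spanning circuit is a circuit $C$ with $r(C)=r$) when the number of circuits of size $r+1$ that are spanning is positive; since a circuit of size $r+1$ is automatically of rank $r$, this is just the count of size-$(r+1)$ circuits, which we have shown is derivable. Specializing to the cycle matroid $M(G)$ of a graph $G$, a spanning circuit of $M(G)$ is a Hamiltonian cycle of $G$, so Hamiltonicity is determined by $\mathcal{G}(M(G))$. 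The main obstacle I anticipate is purely bookkeeping: getting the rank--size correspondence under duality exactly right (especially for cyclic sets, where both rank and size shift), and confirming that the relevant flat-counts fall within the range of ranks and sizes that Proposition~\ref{saturatedchains} actually covers; once those indices are pinned down, each assertion is an immediate application of the already-established results.
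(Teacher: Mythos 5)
Your proposal is correct and follows essentially the same route as the paper: cocircuits are complements of copoints (giving $f_{r-1}(n-s)$), circuits of $M$ are cocircuits of $M^*$ handled via Proposition~\ref{prop:dual}, and cyclic sets are exactly the complements of flats of $M^*$, with the rank--size bookkeeping you supply matching the standard dual rank formula. The spanning-circuit/Hamiltonicity conclusion via counting circuits of size $r(M)+1$ is also the intended argument, despite the slightly garbled parenthetical justification.
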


\begin{proof}
  A cocircuit is the complement of a copoint; hence, the number of
  cocircuits of size $t$ equals $f_{r-1} (n-t)$. Circuits in $M$ are
  cocircuits in the dual $M^*$. Finally, a set is cyclic if and only
  if it is a union of cocircuits in $M^*$, that is, its complement is
  a flat of $M^*$.
\end{proof}

The matroids in Figure \ref{fig:2exp} show that none of the parameters
in Corollary \ref{cocircuits} is determined by the Tutte polynomial.

Proposition \ref{saturatedchains} can be used to derive $f_k(s,c)$,
the number of rank-$k$ flats $X$ of size $s$ such that the restriction
$M|X$ has exactly $c$ coloops.  The number $f_k(s,0)$ is the number of
\emph{cyclic} flats (that is, flats without coloops) of rank $k$ and
size $s$. We use the following easy lemma.

\begin{lemma}\label{ones} 
  Let $(X_0,X_1,\ldots,X_r)$ be a flag with composition
  $(a_0,a_1,\ldots,a_r)$. Then the restriction $M|X_{i+1}$ is the
  direct sum of $M|X_i$ and a coloop if and only if $a_{i+1}=1$.
\end{lemma}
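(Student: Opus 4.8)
The plan is to prove Lemma \ref{ones} directly from the definitions of flag composition and coloop. First I would recall that for a flag $(X_0,X_1,\ldots,X_r)$ with composition $(a_0,a_1,\ldots,a_r)$, we have $a_{i+1}=|X_{i+1}-X_i|$, and that $X_i$ is a rank-$i$ flat contained in the rank-$(i+1)$ flat $X_{i+1}$. I would then observe that in the restriction $M|X_{i+1}$, the flat $X_i$ is a copoint (a rank-$(r'-1)$ flat where $r'=r(M|X_{i+1})=i+1$), and that an element $e\in X_{i+1}-X_i$ is a coloop of $M|X_{i+1}$ precisely when $X_{i+1}-e$ is a hyperplane of $M|X_{i+1}$, equivalently when $\cl_{M}(X_i\cup e)-e$ has rank $i$, which forces $X_i = \cl_M(X_{i+1}-e)$ inside $M|X_{i+1}$.

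The key steps, in order, are: (1) if $a_{i+1}=1$, say $X_{i+1}-X_i=\{e\}$, then $X_{i+1}=X_i\cup e$ with $r(X_i)=i$ and $r(X_{i+1})=i+1$, so $e$ is a coloop of $M|X_{i+1}$ and $M|X_{i+1}=(M|X_i)\oplus U_{1,1}$, since every other element of $X_{i+1}$ lies in $X_i$ and the rank adds; (2) conversely, if $M|X_{i+1}=(M|X_i)\oplus(\text{coloop on }\{e\})$, then $e$ is the unique element of $X_{i+1}$ not spanned by $X_i\cup(\text{anything avoiding }e)$; in particular every element of $X_{i+1}-\{e\}$ has rank-contribution zero over $X_i$, so $X_{i+1}-\{e\}\subseteq \cl_M(X_i)=X_i$ (using that $X_i$ is closed), giving $X_{i+1}-X_i=\{e\}$ and hence $a_{i+1}=1$. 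I would phrase (2) contrapositively: if $a_{i+1}\geq 2$, pick two distinct elements $e,f\in X_{i+1}-X_i$; since $r(X_i\cup e)=i+1=r(X_{i+1})$ and likewise for $f$, neither $e$ nor $f$ is a coloop of $M|X_{i+1}$ (each lies on a circuit within $X_{i+1}$, e.g. a circuit inside $X_i\cup\{e,f\}$), so $M|X_{i+1}$ has a coloop only if it comes from $M|X_i$, but then the coloop set of $M|X_{i+1}$ cannot be disjoint from $X_i$, contradicting that $M|X_{i+1}=(M|X_i)\oplus U_{1,1}$ would require the new coloop outside $X_i$.

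I do not expect a serious obstacle here; the main point requiring a line of care is the converse direction, making sure that ``$M|X_{i+1}$ is the direct sum of $M|X_i$ and a coloop'' is interpreted as requiring the coloop's ground element to lie in $X_{i+1}-X_i$ (which is forced by $|X_{i+1}|=|X_i|+1$ in the $a_{i+1}=1$ case, and is exactly what fails when $a_{i+1}\geq 2$). Everything else is an immediate consequence of the rank function being additive on direct sums (equation (\ref{eqn:dirsum})) and of $X_i$ being a flat, hence closed in $M|X_{i+1}$.
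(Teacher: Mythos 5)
Your proof is correct; the paper states Lemma \ref{ones} without proof (calling it an easy lemma), and your argument is the natural one it has in mind: when $a_{i+1}=1$, the fact that $X_i$ is closed means the unique element of $X_{i+1}-X_i$ lies in no circuit of $M|X_{i+1}$, so it is a coloop and $M|X_{i+1}=(M|X_i)\oplus U_{1,1}$. For the converse you could shorten your discussion considerably: a direct sum of $M|X_i$ (ground set $X_i$) with a one-element matroid has exactly $|X_i|+1$ elements, so $|X_{i+1}-X_i|=1$ by counting alone, making the circuit-based contrapositive argument unnecessary (though what you wrote is not wrong).
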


\begin{prop}\label{prop:recovernumbercoloops} 
  The number $f_k(s,c)$ is derivable from the $\mathcal{G}$-invariant.
\end{prop}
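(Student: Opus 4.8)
The plan is to pin down each number $f_k(s,c)$, for fixed $k$ and $s$, by a small triangular linear system whose inputs are numbers of saturated chains of flats that Proposition~\ref{saturatedchains} already tells us are derivable from $\mathcal G(M)$. For $0\le t\le k$, I would look at
\[
N_t \;=\; F_{k-t,\,k}\bigl(s-t,\;s-t+1,\;\dots,\;s\bigr),
\]
the number of saturated chains $(X_{k-t},X_{k-t+1},\dots,X_k)$ of flats of $M$ with $r(X_j)=j$, with $|X_k|=s$, and with $|X_j|=|X_{j-1}|+1$ for $k-t<j\le k$; by Proposition~\ref{saturatedchains} and Theorem~\ref{thm:catdata}, $N_t$ is derivable from $\mathcal G(M)$. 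The idea is then to count these chains according to their top flat.

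So fix a flat $Y$ of rank $k$ and size $s$, let $c$ be the number of coloops of $M|Y$, and let $Z$ be $Y$ with those $c$ coloops removed. These are standard facts about the largest cyclic flat of $M|Y$ (see also Lemma~\ref{ones}): $Z$ is a flat of $M$, it is cyclic, $r(Z)=k-c$, $|Z|=s-c$, and $M|Y$ is the direct sum of $M|Z$ with $c$ coloops; thus the lattice of flats of $M|Y$ is the product of $L(M|Z)$ with the Boolean lattice on the set $C$ of those $c$ coloops. In this product description every flat of $M|Y$ is $G\cup S$ with $G$ a flat of $M|Z$ and $S\subseteq C$, of rank $r(G)+|S|$ and size $|G|+|S|$. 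Hence a flat of $M|Y$ of rank $k-t$ and size $s-t$ has $|G|-r(G)=|Z|-r(Z)$, so $|Z\setminus G|=r(Z)-r(G)$, i.e.\ $Z\setminus G$ is independent over $G$ in $M|Z$; a one-line rank computation (for $e\in Z\setminus G$, $r(Z\setminus e)=r(G)+(|Z\setminus G|-1)=r(Z)-1$) then makes every element of $Z\setminus G$ a coloop of $M|Z$. Since $Z$ is cyclic this forces $G=Z$, and then $|S|=c-t$. So the flats of $M|Y$ of rank $k-t$ and size $s-t$ are exactly the $\binom{c}{c-t}=\binom{c}{t}$ sets $Z\cup S$ with $S$ a $(c-t)$-subset of $C$; and for each such $W=Z\cup S$ the flats of $M|Y$ between $W$ and $Y$ are the $Z\cup S'$ with $S\subseteq S'\subseteq C$, so a saturated chain from $W$ to $Y$ adjoins one element of $C\setminus S$ at a time, and there are $t!$ of them. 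Thus exactly $\binom{c}{t}\,t!=(c)_t$ of the chains counted by $N_t$ have top flat $Y$ (which is $0$ when $c<t$, as it should be).

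Summing over the top flats then gives $N_t=\sum_{c}(c)_t\,f_k(s,c)$ for $0\le t\le k$. The matrix $\bigl((c)_t\bigr)_{0\le t,c\le k}$ is upper triangular with diagonal entries $(c)_c=c!\ne 0$, hence invertible; solving the system expresses each $f_k(s,c)$ as an explicit integer combination of the $N_t$ (in fact $f_k(s,c)=\sum_{t\ge c}(-1)^{t-c}\binom{t}{c}N_t/t!$), and so $f_k(s,c)$ is derivable from $\mathcal G(M)$. I expect the only non-routine step to be the middle paragraph — identifying the rank-$(k-t)$, size-$(s-t)$ subflats of $Y$ as precisely the sets $Z\cup S$ with $|S|=c-t$ — for which the essential point is that the cyclic core $Z$ has no coloops, so no subflat can realize the "missing" rank inside $Z$; the rest is bookkeeping with the product lattice of flats of $M|Y$.
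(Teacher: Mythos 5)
Your proof is correct and takes essentially the same route as the paper: you set up exactly the same triangular linear system $F_{k-t,k}(s-t,s-t+1,\ldots,s)=\sum_{c}(c)_t\,f_k(s,c)$ and invert it, using Proposition~\ref{saturatedchains} for the left-hand sides. Your middle paragraph merely supplies, via the cyclic core of $M|Y$, the detailed justification of the chain count $(c)_t$ per top flat that the paper asserts in a single sentence, and your explicit binomial inversion is a harmless extra.
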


\begin{proof} 
  For $j$ with $0\leq j\leq k$, the numbers $f_k(s,j)$ satisfy the
  linear equations, one for each $c$ with $0\leq c\leq k$,
  \[
  \sum_{j=c}^k f_{k}(s,j) \frac {j!}{(j-c)!} = F_{k-c,k}(s-c,
  s-c+1,s-c+2, \ldots, s),
  \]
  where the sequence $s-c,s-c+1,s+c+2,\ldots,s$ increases by $1$ at
  each step.  To see this, note that the chains that
  $F_{k-c,k}(s-c, s-c+1, \ldots, s)$ counts are obtained by picking a
  rank-$k$ size-$s$ flat that has exactly $j$ coloops with
  $c\leq j\leq k$, and going down from rank $k$ to rank $k-c$ by
  deleting $c$ of the $j$ coloops one by one in some order.  As the
  system of linear equations is triangular with diagonal entries equal
  to $c!,$ we can solve it to get $f_k(s,c).$
\end{proof} 

Much of the interest in the $\mathcal{G}$-invariant has centered on
the fact that it is a universal valuative invariant, so we end this
section by relating our work to that part of the theory.  For brevity,
we address these remarks to readers who are already familiar with
valuative invariants as discussed in \cite{Derksen}.  We show that the
parameters studied in this section are valuative invariants.

To make explicit the dependence on the matroid $M$, we shall write,
for example, $f_k(M;s)$ instead of $f_k(s)$. We shall use two results
from the theory of valuative invariants.  The first is the basic
theorem of Derksen \cite{Derksen} that specializations of the
$\mathcal{G}$-invariant, in particular, the $\mathcal{G}$-invariant
coefficients $g_{\underline{r}}(M)$, are valuative invariants.  The
second is the easy lemma that if $u$ and $v$ are valuative invariants
on size-$n$ rank-$r$ matroids, then so is any linear combination
$\alpha_{nr} u + \beta_{nr} v,$ where $\alpha_{nr}$ and $\beta_{nr}$
depend only on $n$ and $r$.
 
\begin{thm}\label{valuative}
  The following parameters are valuative invariants:
  \[
  \nu(M;a_i),\, F_{h,k}(M;s_i),\, f_k(M;s), \, \text{and} \,\,
  f_k(M;s,c).
  \]
\end{thm}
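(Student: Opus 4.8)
The plan is to derive everything from the two facts singled out just before the statement: first, Derksen's theorem that every $\mathcal{G}$-invariant coefficient $g_{\underline{r}}(M)$ is a valuative invariant on size-$n$ rank-$r$ matroids; and second, the elementary closure property that an $(n,r)$-dependent linear combination $\alpha_{nr}u+\beta_{nr}v$ of valuative invariants $u,v$ (on size-$n$ rank-$r$ matroids) is again a valuative invariant. Iterating the second fact, any finite $(n,r)$-dependent linear combination of the $g_{\underline{r}}(M)$'s is a valuative invariant. So the whole proof reduces to exhibiting each of the four listed parameters as such a linear combination of the $\mathcal{G}$-coefficients.

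For $\nu(M;a_i)$ this is immediate: by Theorem~\ref{thm:catdata}, expanding $\mathcal{G}(M)=\sum g_{\underline{r}}(M)[\underline{r}]$ in the $\gamma$-basis shows that each $\nu(M;a_0,\ldots,a_r)$ is obtained from the vector $(g_{\underline{r}}(M))$ by applying the inverse of the (lower-triangular, $(n,r)$-dependent) change-of-basis matrix $(c_{(a_j)}(b_j))$; thus $\nu(M;a_i)$ is an $(n,r)$-dependent linear combination of the $g_{\underline{r}}(M)$ and hence valuative. Next, $F_{h,k}(M;s_i)$: the proof of Proposition~\ref{saturatedchains} shows that $s_h!\,(n-s_k)!\,F_{h,k}(M;s_h,\ldots,s_k)$ equals a fixed (depending only on $n$, $r$, $h$, $k$, $s_h$, $s_k$) linear combination of the coordinates $\nu(M;a_0,\ldots,a_h,s'_{h+1},\ldots,s'_k,a_{k+1},\ldots,a_r)$, obtained by specializing every $\gamma$-symbol to $1$ in Lemma~\ref{GresX}. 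Since the $\nu$'s are valuative and $s_h!\,(n-s_k)!$ depends only on $n,r$, so is $F_{h,k}(M;s_i)$. The parameter $f_k(M;s)=F_{k,k}(M;s)$ is then the special case $h=k$.

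For $f_k(M;s,c)$, invoke the linear system in the proof of Proposition~\ref{prop:recovernumbercoloops}: for fixed $n,r,k,s$, the vector $(f_k(M;s,j))_{c\le j\le k}$ is the solution of a triangular linear system whose right-hand sides are the quantities $F_{k-c,k}(M;s-c,s-c+1,\ldots,s)$ and whose coefficient matrix $\bigl(j!/(j-c)!\bigr)$ depends only on $k$ (hence only on $n,r$). Inverting this fixed matrix expresses each $f_k(M;s,c)$ as an $(n,r)$-dependent linear combination of the $F_{k-c,k}(M;\cdot)$'s, which we have just shown to be valuative; so $f_k(M;s,c)$ is valuative as well. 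I do not anticipate a genuine obstacle here — the content is entirely in the earlier propositions — but the one point requiring care is bookkeeping the hypothesis ``the scalars depend only on $n$ and $r$'': one must check that in each step the change-of-basis matrix, the falling factorials $s_h!\,(n-s_k)!$, and the triangular matrix $\bigl(j!/(j-c)!\bigr)$ genuinely involve only $n$, $r$ (and the fixed indices like $h,k,s,c$), not the matroid $M$, so that the ``easy lemma'' applies; this is routine but is the step I would write out most explicitly.
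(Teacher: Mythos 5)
Your proposal is correct and follows essentially the same route as the paper: both rest on Derksen's result that the coefficients $g_{\underline{r}}(M)$ are valuative, the closure of valuative invariants under $(n,r)$-dependent linear combinations, the inverse of the $\gamma$-basis change-of-basis matrix for $\nu(M;a_i)$, the symbols-to-$1$ specialization of Lemma \ref{GresX} for $F_{h,k}(M;s_i)$ and $f_k(M;s)$, and the triangular system from Proposition \ref{prop:recovernumbercoloops} for $f_k(M;s,c)$. Your closing remark about checking that all scalars depend only on $n$, $r$, and the fixed indices is exactly the bookkeeping the paper's proof relies on as well.
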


\begin{proof}   
  The catenary data is obtained from the $\mathcal{G}$-invariant by a
  change of basis in $\mathcal{G}(n,r)$.  Hence, $\nu(M;a_i)$ is a
  linear combination of $g_{\underline{r}}(M)$; explicitly,
  $\nu(M;a_i)$ is obtained from the vector of $\mathcal{G}$-invariant
  coefficients by applying the inverse of
  $[c_{(a_0,a_1,\ldots,a_r)}(b_0,b_1, \ldots,b_r) ]$, a matrix
  depending only on $n$ and $r$.  Hence, $\nu(M;a_i)$ is a valuative
  invariant.  In the notation of Lemma \ref{GresX}, the number
  $F_{h,k}(M;s_h,\ldots,s_k)$ is given in terms of the catenary data
  by
  \[
  \frac {1}{s_h! (n-s_k)!} \sum_{(a_j)} \gamma_1(a_0,\ldots,a_h)
  \gamma_1(0,a_{k+1},\ldots,a_r)
  \nu(M;a_0,\ldots,a_{h},s_{h+1}^{\prime},\ldots,s_k^{\prime},a_{k+1},\ldots,a_r),
  \]
  where the numbers $\gamma_1(a_0,\ldots,a_h)$ and
  $\gamma_1(0,a_{k+1},\ldots,a_r)$ are obtained from
  $\gamma(a_0,\ldots,a_h)$ and $\gamma(0,a_{k+1},\ldots,a_r)$ by
  specializing all symbols to $1$, and depend only on $h$, $k$, $s_h$,
  $s_k$, $n$, and $r$.  Hence $F_{h,k}(M;s_i)$ and $f_k(M;s)$ are
  valuative invariants.  Finally, the numbers $f_k(M;s,c)$ are
  obtained from the numbers $F_{h,k}(M;s_i)$ by solving a system of
  equations with coefficients depending only on $k$, $s$, and $c$;
  hence, they are valuative invariants as well.
\end{proof}

\section{Reconstructing the $\mathcal{G}$-invariant from
  decks} \label{sec:recon}

Let $\bar{\mathcal{G}}(n,r)$ be the subspace of $\mathcal{G}(n,r)$
that is spanned by the $\gamma$-basis elements that are indexed by
compositions that start with $0$. The \emph{circle product} $\odot$ is
the binary operation from $\mathcal{G}(n_1,r_1) \times
\bar{\mathcal{G}}(n_2,r_2)$ to $\mathcal{G}(n_1+n_2, r_1 + r_2)$
defined by
\[
\gamma (a_0,a_1, \ldots,a_{r_1}) \odot \gamma(0,b_1, \ldots,b_{r_2})  = 
\gamma(a_0,a_1, \ldots, a_{r_1}, b_1,b_2,\ldots,b_{r_2})
\]  
on $\gamma$-basis elements and extended to $\mathcal{G}(n_1,r_1)
\times \bar{\mathcal{G}}(n_2,r_2)$ by bilinearity.  Let
$\mathcal{F}_k$ denote the set of all rank-$k$ flats in a matroid $M$.
The next lemma shows that only the sets $\mathcal{F}_k$ have a
property that is a key to the work in this section.

\begin{lemma}
  For a set $\mathcal{A}$ of flats of a matroid $M$, the following
  statements are equivalent:
  \begin{enumerate}
  \item $\mathcal{A} = \mathcal{F}_k$ for some $k$ with $0\leq k\leq
    r(M)$, and
  \item each flag of $M$ contains exactly one flat in $\mathcal{A}$.
  \end{enumerate}
\end{lemma}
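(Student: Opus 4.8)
The plan is to prove the two directions of the equivalence separately, with the forward direction being routine and the reverse direction requiring the real work.

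For the implication $(1)\Rightarrow(2)$: if $\mathcal{A}=\mathcal{F}_k$, then by the definition of a flag as a maximal chain $X_0\subset X_1\subset\cdots\subset X_r$ with $r(X_j)=j$, every flag contains exactly one flat of each rank, in particular exactly one rank-$k$ flat. This is immediate from the definition of flag given at the start of Section~\ref{sec:catenarydata}.

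For the implication $(2)\Rightarrow(1)$: I assume $\mathcal{A}$ is a set of flats such that each flag meets $\mathcal{A}$ in exactly one flat, and I want to show $\mathcal{A}=\mathcal{F}_k$ for some $k$. First I would establish that $\mathcal{A}$ is an antichain: if $F\subsetneq G$ with both in $\mathcal{A}$, extend the chain $(\cl(\emptyset),\ldots,F,\ldots,G,\ldots,E)$ to a flag (possible since one can always refine a chain of flats to a maximal chain), and this flag contains at least two elements of $\mathcal{A}$, a contradiction. Next, the key step: I claim all flats in $\mathcal{A}$ have the same rank. Suppose $F,G\in\mathcal{A}$ with $r(F)=h<k=r(G)$. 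The idea is to build a flag that either avoids $\mathcal{A}$ entirely or meets it twice, contradicting $(2)$. One clean approach: consider a flag through $F$; it contains exactly one element of $\mathcal{A}$, namely $F$ (since $\mathcal{A}$ is an antichain, no other $\mathcal{A}$-flat lies on a chain through $F$ unless comparable to $F$, but then it is $F$); this forces every rank-$k$ flat lying above some rank-$(r-1)$ flat on that flag to \emph{not} be in $\mathcal{A}$ — but more useful is to argue via counting or via a swapping argument. Concretely: pick any flag $\Phi=(X_0,\ldots,X_r)$; it meets $\mathcal{A}$ at some $X_j$. Now if some $X_i$ with $i\neq j$ can be ``rerouted'' — i.e., there is another flag agreeing with $\Phi$ except that $X_i$ is replaced — such rerouted flags must also meet $\mathcal{A}$ exactly once, and since they still contain all $X_\ell$ for $\ell\neq i$, they must meet $\mathcal{A}$ at the same $X_j$ (as $X_j$ is already there) unless the new rank-$i$ flat is in $\mathcal{A}$, which would give two hits. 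This shows the ``index'' $j$ at which a flag hits $\mathcal{A}$ is locally constant under elementary rerouting moves; since the space of flags is connected under such moves (any two flags are linked by a sequence of single-flat changes — a standard fact, provable by induction on rank using that lower intervals in $L(M)$ are connected this way), the index $j$ is globally constant. Therefore $\mathcal{A}\subseteq\mathcal{F}_j$. Finally, $\mathcal{A}=\mathcal{F}_j$: every rank-$j$ flat $Y$ lies on some flag, which hits $\mathcal{A}$ at its unique rank-$j$ flat, namely $Y$, so $Y\in\mathcal{A}$.

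The main obstacle is establishing that the flag complex is connected under elementary moves that change a single flat $X_i$ to another rank-$i$ flat (with all other $X_\ell$ fixed), and that such a move is always available when $X_i$ is not forced — equivalently, that the interval $[X_{i-1},X_{i+1}]$ in the lattice of flats, which is a rank-$2$ interval, has more than one atom unless $X_i$ is the unique flat strictly between. This is a standard order-theoretic fact (rank-$2$ intervals in a geometric lattice have at least two atoms when nontrivial), but I would need to phrase the connectivity argument carefully — induction on $r(M)$, restricting to $M|X_{r-1}$ to move the bottom part of the flag and using that any two copoints are connected through their common sublattice structure, or alternatively a direct basis-exchange argument since flags correspond (up to the choices counted in Lemma~\ref{lem:countbybases}) to ordered bases, and ordered bases are connected under single-element swaps by the basis exchange axiom. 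The basis-exchange route is probably cleanest: given two ordered bases, walk from one to the other by exchanges, noting each intermediate ordered basis yields a flag, and consecutive flags differ in at most one flat.
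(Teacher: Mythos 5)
Your forward direction is fine, and the skeleton of the reverse direction (antichain, invariance of the ``hit index'' under moves that change a single flat, connectivity of the flag graph, then $\mathcal{A}\subseteq\mathcal{F}_j$ and finally $\mathcal{A}=\mathcal{F}_j$) would work \emph{if} the connectivity claim were established — but that claim is exactly where the weight of the proof lies, and it is not proved. Worse, the route you call cleanest, via basis exchange, fails as stated: if you replace the $i$-th element of an ordered basis $(e_1,\ldots,e_r)$ by another element, then every flat $\cl(\{e_1,\ldots,e_\ell\})$ with $\ell\geq i$ can change, not just the rank-$i$ one (already in $U_{3,6}$, exchanging the first element of an ordered basis changes both the point $X_1$ and the line $X_2$ of the associated flag). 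So consecutive flags along a basis-exchange walk may differ in many flats, and your locally-constant-index argument does not apply to such steps. A smaller omission: even granting connectivity, along a path of elementary moves the rank-$j$ flat itself may be rerouted, and your invariance argument only treats positions $i\neq j$; the case $i=j$ is easy (the rerouted flag contains no member of $\mathcal{A}$ at ranks other than $j$, so its unique hit must be its new rank-$j$ flat), but it is needed. The connectivity of flags under single-flat moves is true for geometric lattices and can be proved by induction on rank, but the induction step itself requires the exchange fact that any two copoints (or rank-$k$ flats) are linked by a sequence in which consecutive flats meet in rank one less — so you end up needing the very ingredient that drives the shorter argument below.

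For comparison, the paper avoids flag-graph connectivity entirely. Set $k=\max\{r(X)\,:\,X\in\mathcal{A}\}$. A routine exchange argument shows any two rank-$k$ flats are joined by a sequence of rank-$k$ flats in which consecutive ones intersect in a rank-$(k-1)$ flat; if $\mathcal{A}\neq\mathcal{F}_k$, such a sequence from a member of $\mathcal{A}$ to a non-member yields $X\in\mathcal{F}_k\cap\mathcal{A}$ and $Y\in\mathcal{F}_k-\mathcal{A}$ with $r(X\cap Y)=k-1$. Extend one flag of $M|(X\cap Y)$ to a flag of $M$ through $X$ and, separately, to one through $Y$. If the first flag contains another member of $\mathcal{A}$, it violates (2); otherwise all its flats of rank $<k$ avoid $\mathcal{A}$, and then the second flag contains no member of $\mathcal{A}$ at all (ranks $<k$ by the previous sentence, rank $k$ since $Y\notin\mathcal{A}$, ranks $>k$ by maximality of $k$), again violating (2). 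If you want to keep your architecture, prove the connectivity lemma by rank induction using this same copoint-adjacency exchange; but since that is the paper's key step anyway, the direct two-flag comparison is the more economical proof.
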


\begin{proof}
 It is immediate that statement (1) implies
    statement (2).  Now assume that statement (2) holds.  We claim
  that statement (1) holds where $k$ is
  $\max\{r(X)\,:\,X\in \mathcal{A}\}$.  A routine exchange argument
  shows that if $S$ and $T$ are distinct flats in $\mathcal{F}_k$,
  then there is a sequence $S,U,\ldots,T$ of flats in $\mathcal{F}_k$
  such that the intersection of each pair of consecutive flats has
  rank $k-1$.  If $\mathcal{A}\ne\mathcal{F}_k$, then from such a
  sequence with $S\in \mathcal{F}_k\cap \mathcal{A}$ and
  $T\in \mathcal{F}_k-\mathcal{A}$, we get flats
  $X\in \mathcal{F}_k\cap \mathcal{A}$ and
  $Y\in \mathcal{F}_k-\mathcal{A}$ with $r(X\cap Y)=k-1$.  Extend a
  flag $(Z_i)$ of $M|(X\cap Y)$ to a flag $(X_i)$ of $M$ that contains
  $X$, and, separately, to a flag $(Y_i)$ of $M$ that contains $Y$.
  Observe that at least one of $(X_i)$ or $(Y_i)$ contradicts
  statement (2); thus, statement (1) holds.
\end{proof}

\begin{thm}[The slicing formula]\label{slice} 
  Let $M$ be a rank-$r$ matroid $M$.  For $k$ with $0\leq k\leq r$,
  \[
  \mathcal{G}(M) = \sum_{X \in \mathcal{F}_k} \mathcal{G}(M|X) \odot
  \mathcal{G}(M/X).
  \]
  Thus,
  \[
  \mathcal{G}(M) = \frac {1}{r+1} \sum_{X \in L(M)} \mathcal{G}(M|X)
  \odot \mathcal{G}(M/X).
  \]
\end{thm}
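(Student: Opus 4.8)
The plan is to prove the first displayed identity directly by the same "partition the permutations" technique used for Theorem \ref{thm:catdata}, and then derive the second identity as an immediate consequence of the first by summing over all ranks $k$.

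First I would fix $k$ with $0 \leq k \leq r$ and partition the set of all $n!$ permutations $\pi$ of $E$ according to the rank-$k$ flat $X := \mathrm{flag}(\pi)_k = \cl(\{\pi(i_1),\ldots,\pi(i_k)\})$, where $i_1 < i_2 < \cdots < i_r$ are the positions of the $1$'s in $\underline{r}(\pi)$. By the lemma immediately preceding the theorem, each flag meets $\mathcal{F}_k$ in exactly one flat, so this is genuinely a partition of the permutations indexed by $X \in \mathcal{F}_k$. The key structural observation is that the data of such a $\pi$ is equivalent to the data of: (i) the relative order in which the elements of $X$ appear among themselves, (ii) the relative order in which the elements of $E - X$ appear among themselves, and (iii) the shuffle pattern telling which positions of $\pi$ are occupied by elements of $X$. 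Concretely, one checks from the definition of the rank function that the rank sequence $\underline{r}(\pi)$, read off position by position, splits as a shuffle: at a position occupied by an element $e \in X$, the rank increment of $\pi$ equals the rank increment in $M|X$ of the corresponding truncated permutation of $X$ (since $X$ is a flat, adding elements of $X$ never raises the rank past $k$, and the closure operator of $M|X$ agrees with that of $M$ on subsets of $X$); at a position occupied by an element $e \in E - X$, the rank increment of $\pi$ equals the rank increment of $e$ in the contraction $M/X$ relative to the elements of $E - X$ already seen. The condition that $\mathrm{flag}(\pi)_k = X$ forces exactly that the first $k$ of the $1$'s in $\underline{r}(\pi)$ come from the $X$-part; equivalently, the $X$-part contributes a rank sequence of $M|X$ (an $(|X|,k)$-sequence) and the $(E-X)$-part contributes a rank sequence of $M/X$ that begins with $0$'s accounting for the loops of $M/X$ coming from $\cl_M(X) \setminus X$ — i.e., an element of $\bar{\mathcal{G}}(n-|X|, r-k)$.

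Next I would translate this bijection into the $\gamma$-basis/$\odot$ language. Running the argument at the level of flags rather than permutations is cleaner: by Lemma \ref{gammabasis}, grouping permutations by $\mathrm{flag}(\pi)$ replaces the permutation sum by $\gamma$ of the flag's composition. A flag $(X_i)_{i=0}^r$ of $M$ with $X_k = X$ decomposes uniquely into a flag $(X_i)_{i=0}^k$ of $M|X$ and a flag of $M/X$ (namely the images $(X_i / X)_{i=k}^r$, which is a flag of $M/X$ with the rank-$0$ flat $\cl_{M/X}(\emptyset)$ possibly larger than the empty set), and conversely. If $(X_i)_{i=0}^k$ has composition $(a_0,\ldots,a_k)$ and the $M/X$-flag has composition $(b_0, b_1,\ldots,b_{r-k})$ with $b_0 = |\cl_M(X)| - |X|$, then the original flag has composition $(a_0,\ldots,a_{k-1}, a_k + b_0, b_1,\ldots,b_{r-k})$. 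But since $X$ is a flat, $\cl_M(X) = X$, so $b_0 = 0$ and the composition is simply the concatenation $(a_0,\ldots,a_k,b_1,\ldots,b_{r-k})$ — which is exactly what $\gamma(a_0,\ldots,a_k) \odot \gamma(0,b_1,\ldots,b_{r-k})$ is defined to produce. Summing over all flags of $M$ passing through $X$, then over all $X \in \mathcal{F}_k$, and applying Theorem \ref{thm:catdata} to $M|X$ and to $M/X$, yields
\[
\mathcal{G}(M) = \sum_{X \in \mathcal{F}_k} \mathcal{G}(M|X) \odot \mathcal{G}(M/X).
\]
For the second formula, I would simply sum the first over $k = 0, 1, \ldots, r$: the left side becomes $(r+1)\mathcal{G}(M)$, while on the right each flat $X \in L(M)$ is counted exactly once, for $k = r(X)$, giving $\sum_{X \in L(M)} \mathcal{G}(M|X) \odot \mathcal{G}(M/X)$; dividing by $r+1$ finishes the proof.

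The main obstacle is the bookkeeping in the shuffle decomposition — specifically, verifying carefully that the rank sequence of $\pi$ really does split as the shuffle of the rank sequence of the induced permutation on $X$ (computed in $M|X$) and that on $E - X$ (computed in $M/X$), and that the flag-level condition $\mathrm{flag}(\pi)_k = X$ corresponds exactly to the $X$-part being a full rank sequence of $M|X$ with no "wasted" $1$'s. Working at the level of flags via Lemma \ref{gammabasis}, rather than permutations, sidesteps most of this: there the only thing to check is that restriction and contraction set up a bijection between flags of $M$ through $X$ and pairs (flag of $M|X$, flag of $M/X$), which is the standard correspondence for intervals in the lattice of flats, together with the observation that flatness of $X$ makes the $b_0$-term vanish so the composition is a literal concatenation. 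Everything else is routine.
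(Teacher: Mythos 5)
Your proposal is correct and, once you pass to the flag-level argument, it is essentially the paper's own proof: partition flags by the rank-$k$ flat they contain, use the bijection with pairs of flags of $M|X$ and $M/X$ (where flatness of $X$ makes $M/X$ loopless so the compositions concatenate, matching the definition of $\odot$), apply Theorem \ref{thm:catdata} to each piece, and obtain the second identity by summing over $k$. The preliminary permutation/shuffle discussion is unnecessary but harmless.
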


\begin{proof}   
  A flat $X$ in $\mathcal{F}_k$ determines the set
  \[
  \mathcal{F}_X = \{(X_i): X_k = X\}
  \]
  of flags intersecting $\mathcal{F}_k$ at $X$. These subsets partition
  the set of all flags into $|\mathcal{F}_k|$ blocks.  Using this
  partition and Theorem \ref{thm:catdata}, we obtain  
  \begin{align*}
  &  \mathcal{G}(M) =\, \sum_{X \in \mathcal{F}_k} \Biggl(\sum_{(X_i) \in
        \mathcal{F}_X}  \gamma (|X_0|,|X_1-X_0|, \ldots, |X_r -
      X_{r-1}|)\Biggr)  \\
   & =\,  \sum_{X \in \mathcal{F}_k} \Biggl(\sum_{(X_i) \in
        \mathcal{F}_X}  \gamma (|X_0|,|X_1-X_0|, \ldots, |X -
      X_{k-1}|) \odot \gamma (0,|X_{k+1} - X|, \ldots, |X_r -
      X_{r-1}|)\Biggr)  \\
   & =\,\sum_{X \in \mathcal{F}_k} \mathcal{G}(M|X) \odot \mathcal{G}(M/X).  
    \qedhere 
  \end{align*}  
\end{proof}

\begin{cor}\label{cor:recon}
  For any $k$ with $0\leq k\leq r(M)$, the $\mathcal{G}$-invariant of
  $M$ can be reconstructed from the deck, or unlabeled multiset,
  \[
  \{(\mathcal{G}(M|X), \mathcal{G}(M/X)): \, X \in \mathcal{F}_k\}
  \]
  of ordered pairs of $\mathcal{G}$-invariants.  
\end{cor}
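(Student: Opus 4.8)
The plan is to read the conclusion off directly from the slicing formula (Theorem~\ref{slice}). Fix $k$ with $0 \leq k \leq r(M)$. Theorem~\ref{slice} gives
\[
\mathcal{G}(M) = \sum_{X \in \mathcal{F}_k} \mathcal{G}(M|X) \odot \mathcal{G}(M/X),
\]
so the only thing left to argue is that the right-hand side is a function of the data $\{(\mathcal{G}(M|X), \mathcal{G}(M/X)) : X \in \mathcal{F}_k\}$ viewed as an unlabeled multiset, with no reference to the flats $X$ themselves.

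First I would observe that the circle product $\odot$ is a fixed bilinear operation whose only inputs are the two $\mathcal{G}$-invariants $\mathcal{G}(M|X)$ and $\mathcal{G}(M/X)$; it makes no use of the flat $X$. Hence each summand $\mathcal{G}(M|X) \odot \mathcal{G}(M/X)$ is determined by the ordered pair $(\mathcal{G}(M|X), \mathcal{G}(M/X))$ alone. Since addition in $\mathcal{G}(n,r)$ is commutative and associative, the value of the sum depends only on the multiset of these ordered pairs, counted with multiplicity, which is exactly the deck in the statement. Thus the procedure ``apply $\odot$ to each pair in the deck and add the results'' returns $\mathcal{G}(M)$, which is the desired reconstruction.

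I would also dispatch the one point that needs a word: for $\odot$ to be applicable to the pair $(\mathcal{G}(M|X), \mathcal{G}(M/X))$, the second entry must lie in $\bar{\mathcal{G}}$. This holds because $X$ is a flat, so $\cl_{M/X}(\emptyset) = \emptyset$; that is, $M/X$ is loopless, so every composition occurring in its catenary data begins with $0$, whence $\mathcal{G}(M/X) \in \bar{\mathcal{G}}(n-|X|,\,r-k)$. This is the same fact already used inside the proof of Theorem~\ref{slice}. One does not even need to know the value of $k$ to carry out the reconstruction: whatever $k$ the deck was built from, summing the circle products of its entries yields $\mathcal{G}(M)$.

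There is essentially no obstacle here beyond making these observations precise; all the content has been absorbed into Theorem~\ref{slice} and the definition of $\odot$. If anything, the step most worth stating carefully is the one above — that the reconstruction map is well defined purely as an operation on unlabeled multisets of pairs of $\mathcal{G}$-invariants — since it is exactly the passage from the labeled family indexed by $\mathcal{F}_k$ to the unlabeled deck.
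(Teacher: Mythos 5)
Your proposal is correct and follows the same route the paper intends: the corollary is an immediate consequence of the slicing formula, since each summand $\mathcal{G}(M|X)\odot\mathcal{G}(M/X)$ depends only on the ordered pair of $\mathcal{G}$-invariants and the sum depends only on their multiset. Your extra check that $\mathcal{G}(M/X)$ lies in $\bar{\mathcal{G}}$ (because $X$ is a flat, so $M/X$ is loopless) is exactly the point implicit in the paper's proof of Theorem \ref{slice}.
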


To get the dual result, recall that deletion and contraction are dual
operations, that is, $(M\del X)^*=M^*/X$ and $(M/X)^*=M^*\del X$ .  As
noted in the proof of Corollary \ref{cocircuits}, a set $X$ is a flat
of $M$ if and only if its complement, $E-X$, is a cyclic set of $M^*$.
Also, $r_M(X) =k$ if and only if $r(M)-k=|E-X|-r_{M^*}(E-X)$, where
the right side is the \emph{nullity} of $E-X$ in $M^*$.  If we replace
$M$ with its dual $M^*$, set $Y=E-X$, and apply Proposition
\ref{prop:dual}, we get the following reconstruction result.

\begin{cor}\label{cor:reconnullity} 
  For any $j$ with $0\leq j\leq r(M^*)$, the $\mathcal{G}$-invariant
  of $M$ can be reconstructed from the deck
  \[
  \{(\mathcal{G}(M|Y), \mathcal{G}(M/Y)): \, Y \text{ a cyclic set of
  } M \text{ of nullity }j\}.
  \]
\end{cor}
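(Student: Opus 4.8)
The plan is to derive Corollary \ref{cor:reconnullity} directly from Corollary \ref{cor:recon} by passing to the dual matroid $M^*$ and using the dictionary between flats of $M^*$ and cyclic sets of $M$. First I would recall the standard facts already assembled in the excerpt: deletion and contraction are interchanged by duality, so $(M|X)^* = M^*/(E-X)$ and $(M/X)^* = M^*|(E-X)$ (equivalently, with $Y = E-X$, we have $M^*|Y$ and $M^*/Y$ arising from $M/X$ and $M|X$); moreover $X$ is a flat of $M^*$ iff $Y = E-X$ is a cyclic set of $M$, and the rank of $X$ in $M^*$ is $j$ exactly when the nullity of $Y$ in $M$ is $j$ (this is the computation $r(M^*) - j = r(M) - (|X| - r_M(X))$ rearranged, as noted before Corollary \ref{cor:reconnullity}, but applied with the roles of $M$ and $M^*$ swapped).

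Next I would apply Corollary \ref{cor:recon} to the matroid $M^*$ with the chosen rank $j$, $0 \le j \le r(M^*)$: this says $\mathcal{G}(M^*)$ can be reconstructed from the deck $\{(\mathcal{G}(M^*|X), \mathcal{G}(M^*/X)) : X \in \mathcal{F}_j(M^*)\}$. Then I would translate each ingredient. The index set $\mathcal{F}_j(M^*)$ becomes, via $X \mapsto E - X = Y$, the set of cyclic sets $Y$ of $M$ of nullity $j$. Each pair $(\mathcal{G}(M^*|X), \mathcal{G}(M^*/X))$ becomes, via the duality of deletion/contraction, a pair built from $M/Y$ and $M|Y$; applying Proposition \ref{prop:dual} (which gives $\mathcal{G}(N^*)$ from $\mathcal{G}(N)$ by a fixed, matroid-independent symbol substitution) lets me rewrite $\mathcal{G}(M^*|X) = \mathcal{G}((M/Y)^*)$ and $\mathcal{G}(M^*/X) = \mathcal{G}((M|Y)^*)$ as explicit functions of $\mathcal{G}(M/Y)$ and $\mathcal{G}(M|Y)$. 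Since this substitution is invertible (reversing a word and swapping $0$s and $1$s is an involution), the deck $\{(\mathcal{G}(M|Y), \mathcal{G}(M/Y)) : Y \text{ a cyclic set of } M \text{ of nullity } j\}$ carries exactly the same information as the $M^*$-deck appearing in Corollary \ref{cor:recon}. Finally, Proposition \ref{prop:dual} again recovers $\mathcal{G}(M)$ from $\mathcal{G}(M^*)$, completing the chain of reconstructions.

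The only point requiring care — and the one I would write out explicitly — is bookkeeping the two applications of duality so the symbol substitutions compose correctly: one must check that the pair recovered is $(\mathcal{G}(M|Y), \mathcal{G}(M/Y))$ in the stated order (not swapped), which comes down to tracking that $M|X$ for $X$ a flat of $M^*$ corresponds under the decks to the \emph{contraction} side of $M$ at $Y$ after dualizing, so that reversing the dualization lands the deletion $M|Y$ and contraction $M/Y$ where the statement places them. No real obstacle arises; the proof is a short formal argument assembling Corollary \ref{cor:recon}, Proposition \ref{prop:dual}, and the flat/cyclic-set duality, so the writeup can be brief.
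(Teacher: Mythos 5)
Your proposal is correct and is essentially the paper's own argument: the paper derives Corollary \ref{cor:reconnullity} by applying Corollary \ref{cor:recon} to $M^*$, using the complementation correspondence between flats of $M^*$ and cyclic sets of $M$ together with $(M|Y)^*=M^*/(E-Y)$, $(M/Y)^*=M^*|(E-Y)$, and then transferring $\mathcal{G}$-invariants across duality via Proposition \ref{prop:dual}, exactly as you outline. One small bookkeeping slip: a rank-$j$ flat $X$ of $M^*$ has complement $Y=E-X$ of nullity $r(M^*)-j$ in $M$ (not nullity $j$), but since the statement quantifies over all $j$ this only amounts to relabeling the deck index and does not affect the argument.
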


Two special cases of Corollary \ref{cor:recon} occur at the bottom and
top of the lattice $L(M)$ of flats.  Recall that the \emph{girth} of a
matroid is the minimum size of a circuit in it.  If $M$ has girth at
least $g+2$ and $X$ is a rank-$g$ flat, then $M|X$ is isomorphic to
$U_{g,g}$ and
\[
\mathcal{G}(M|X) = g! \gamma(0,\underbrace{1,1,\ldots,1}_g).
\]   
By Theorem \ref{slice}, 
\[
\mathcal{G}(M) = g! \gamma(0,1,1, \ldots,1) \odot \left( \sum_{X \in
    \mathcal{F}_g} \mathcal{G}(M/X) \right).
\] 

\begin{cor}\label{reconpoint}  
  If $M$ has girth at least $g+2$, then $\mathcal{G}(M)$ can be
  reconstructed from the deck $$\{(\mathcal{G}(M/X): \, X \in
  \mathcal{F}_g\}.$$ In particular, if $M$ is a simple matroid, then
  $\mathcal{G}(M)$ can be reconstructed from the deck
  $\{\mathcal{G}(M/x): \, x \,\,\mathrm{an} \, \mathrm{element}
  \,\mathrm{of}\, M\}$.  Dually, if $M$ has $n$ elements and each
  cocircuit of $M$ has more than $t$ elements, so each set of size
  $n-t$ spans $M$, then $\mathcal{G}(M)$ can be reconstructed from the
  deck
  $$\{(\mathcal{G}(M|Y): \, Y \text{ a cyclic set of } M \text{ with }
  |Y|=n-t \}.$$
\end{cor}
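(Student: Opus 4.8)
The plan is to derive everything from the slicing formula (Theorem~\ref{slice}) together with the observation, made just above the corollary, that a sufficiently large girth forces the relevant restrictions to be free matroids.

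\emph{The first statement.} I would apply Theorem~\ref{slice} at level $g$, so that $\mathcal{G}(M)=\sum_{X\in\mathcal{F}_g}\mathcal{G}(M|X)\odot\mathcal{G}(M/X)$. If $M$ has girth at least $g+2$ and $r(X)=g$, then $|X|=g$: any $g+1$ elements of $X$ lie in a set of rank at most $g$, hence are dependent and so contain a circuit of size at most $g+1<g+2$, contradicting the girth hypothesis. Thus $M|X\cong U_{g,g}$ and $\mathcal{G}(M|X)=g!\,\gamma(0,1,1,\ldots,1)$ for \emph{every} $X\in\mathcal{F}_g$, so by bilinearity of $\odot$,
\[
\mathcal{G}(M)=g!\,\gamma(0,1,1,\ldots,1)\odot\Bigl(\sum_{X\in\mathcal{F}_g}\mathcal{G}(M/X)\Bigr).
\]
The factor $g!\,\gamma(0,1,1,\ldots,1)$ depends only on $g$, and the sum in parentheses is simply the sum of the entries of the multiset $\{\mathcal{G}(M/X):X\in\mathcal{F}_g\}$; hence $\mathcal{G}(M)$ is determined by that deck. (The case $g=0$ is trivial, since then $\mathcal{F}_0=\{\cl(\emptyset)\}$ and $M/\cl(\emptyset)=M$.)

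\emph{The simple case.} A simple matroid has girth at least $3=1+2$, so the first statement applies with $g=1$; since the rank-$1$ flats of a simple matroid are precisely its singletons, the deck $\{\mathcal{G}(M/X):X\in\mathcal{F}_1\}$ is exactly $\{\mathcal{G}(M/x):x\in E(M)\}$.

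\emph{The dual statement.} Here I would apply the first statement to $M^*$ and translate using Proposition~\ref{prop:dual}. Recall that $(M\del X)^*=M^*/X$, so $M|Y=(M^*/(E-Y))^*$, and (as noted before Corollary~\ref{cor:reconnullity}) a set $Y$ is a cyclic set of $M$ if and only if $E-Y$ is a flat of $M^*$. The hypothesis that every cocircuit of $M$ has more than $t$ elements says exactly that $M^*$ has no circuit of size at most $t$, which is precisely the girth condition on $M^*$ that makes each flat $X$ of $M^*$ with $|X|=t$ a free matroid $M^*|X\cong U_{|X|,|X|}$ — the input needed to run the first-statement argument on $M^*$ at the corresponding rank. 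Applying the self-inverse dual specialization of Proposition~\ref{prop:dual} to each entry turns the deck $\{\mathcal{G}(M|Y):Y\text{ cyclic in }M,\ |Y|=n-t\}$ into the corresponding deck of $\mathcal{G}$-invariants of contractions of $M^*$; the first statement reconstructs $\mathcal{G}(M^*)$ from it, and one further use of Proposition~\ref{prop:dual} gives $\mathcal{G}(M)=\mathcal{G}\bigl((M^*)^*\bigr)$.

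\emph{Main obstacle.} None of this is deep: the corollary is a repackaging of the slicing formula. The only point requiring care is the bookkeeping in the dual statement — keeping straight the interaction of Proposition~\ref{prop:dual}, deletion--contraction duality, and the flat/cyclic-set correspondence, and checking that the cocircuit-size hypothesis on $M$ is exactly the girth hypothesis on $M^*$ that the argument uses. It is worth emphasizing \emph{why} reconstruction from an unlabeled deck succeeds: under the stated hypotheses the left $\odot$-factors $\mathcal{G}(M|X)$ in the slicing formula are all equal, so only their sum matters, and that sum is recovered from the multiset without knowing which summand came from which flat.
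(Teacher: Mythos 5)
Your handling of the first two assertions is exactly the paper's own derivation: slice at level $g$ via Theorem~\ref{slice}, observe that girth at least $g+2$ forces every rank-$g$ flat $X$ to satisfy $|X|=g$, so $\mathcal{G}(M|X)=g!\,\gamma(0,1,\ldots,1)$ for all $X\in\mathcal{F}_g$, factor this common left factor out of the $\odot$-sum, and recover the right factor as the sum of the deck; the simple case is $g=1$. That part is correct and needs no further comment.

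In the dual part there is one step that does not close as written. The slicing formula runs over all flats of a fixed \emph{rank} of $M^*$, so what the argument needs is that every flat of $M^*$ of the chosen rank is independent; your sentence instead supplies that every flat of $M^*$ of \emph{size} $t$ is independent, and under the stated hypothesis these are not the same thing. The hypothesis that every cocircuit of $M$ has more than $t$ elements says the girth of $M^*$ is at least $t+1$; this guarantees that every rank-$(t-1)$ flat of $M^*$ is independent, but not every rank-$t$ flat: a cocircuit of $M$ with exactly $t+1$ elements is a circuit of $M^*$ whose closure is a dependent flat of rank $t$, its restriction is not $U_{t,t}$, and its contraction corresponds (via Proposition~\ref{prop:dual} and the flat/cyclic-set correspondence) to a cyclic set of $M$ of size at most $n-t-1$, which is absent from the given deck. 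Running your argument at rank $t-1$ is valid, but it reconstructs $\mathcal{G}(M)$ from the deck of cyclic sets of size $n-t+1$; to get the deck with $|Y|=n-t$ exactly as displayed one needs every cocircuit of $M$ to have at least $t+2$ elements. This off-by-one is present in the printed statement itself---the paper's proof of the dual assertion is only the word ``Dually'' together with the translation set up before Corollary~\ref{cor:reconnullity}---so apart from flagging and repairing that indexing (either strengthen the cocircuit hypothesis by one or shift the size of the cyclic sets in the deck by one), your route coincides with the paper's.
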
  

The other special case gives another perspective on Proposition
\ref{catcurse}, which is equivalent to the corollary that we derive
next.  If $M$ has $n$ elements and $X$ is a copoint of $M$, then $M/X
= U_{1,n-|X|}$ and $\mathcal{G}(M/X) = \gamma(0,n-|X|)$. Hence we get
the following result.

\begin{cor}\label{reconcopoint}  
  The $\mathcal{G}$-invariant of $M$ can be reconstructed from the
  number $n$ of elements in $M$ and the deck of
  $\mathcal{G}$-invariants of copoints.
\end{cor}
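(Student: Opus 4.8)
The plan is to apply the slicing formula (Theorem~\ref{slice}) at the top level $k=r-1$ and then to observe that, in each resulting summand, the second factor is already forced by $n$ together with the first factor. As noted just before the statement, for every copoint $X$ of $M$ we have $M/X\cong U_{1,n-|X|}$, hence $\mathcal{G}(M/X)=\gamma(0,n-|X|)$, so Theorem~\ref{slice} with $k=r-1$ gives
\[
\mathcal{G}(M)=\sum_{X\in\mathcal{F}_{r-1}}\mathcal{G}(M|X)\odot\gamma(0,n-|X|).
\]
Thus the entire right-hand side is a function of $n$ and of the multiset $\{\mathcal{G}(M|X):X\in\mathcal{F}_{r-1}\}$, provided that $|X|$ can be recovered from the single term $\mathcal{G}(M|X)$.

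That recovery is the only point that needs an argument, and it is immediate: every symbol occurring with nonzero coefficient in the $\mathcal{G}$-invariant of an $m$-element matroid is a length-$m$ sequence, so $|X|$ is the common length of the indexing sequences appearing in $\mathcal{G}(M|X)$ (equivalently, $|X|$ is the unique $m$ with $\sum_{\underline{r}} g_{\underline{r}}(M|X)=m!$). Once $|X|$ is known, so is $n-|X|$, which is positive since a copoint is a proper flat, whence $\gamma(0,n-|X|)\in\bar{\mathcal{G}}(n-|X|,1)$; the circle product $\mathcal{G}(M|X)\odot\gamma(0,n-|X|)\in\mathcal{G}(n,r)$ is then computed purely formally from the (canonical, since the $\gamma$-basis is a basis) $\gamma$-basis expansion of $\mathcal{G}(M|X)$. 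Summing these products over the deck yields $\mathcal{G}(M)$. Note that, in contrast with the general reconstruction in Corollary~\ref{cor:recon}, one needs here only the $\mathcal{G}$-invariants of the copoints, not the ordered pairs that also include the contractions, precisely because the contraction $M/X$ is the uniform matroid determined by $|X|$, and $|X|$ is itself encoded in $\mathcal{G}(M|X)$.

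I expect no genuine obstacle here; the statement is essentially a reformulation of the slicing formula, and the remaining work is bookkeeping. The one loose end is the degenerate case $r(M)=0$: then $\mathcal{F}_{r-1}$ and hence the deck are empty, $M$ consists of $n$ loops, and $\mathcal{G}(M)=n!\,[0^n]$ is determined by $n$ alone, so the claim holds trivially. Finally, it is worth recording that this corollary is exactly the $\mathcal{G}$-invariant counterpart of Proposition~\ref{catcurse}: reading the displayed identity coordinatewise in the $\gamma$-basis, and using that tensoring with $\gamma(0,a_r)$ just appends the last part $a_r$, recovers $\nu(M;a_0,\ldots,a_{r-1},a_r)=\sum_{X\text{ a copoint},\,|E-X|=a_r}\nu(M|X;a_0,\ldots,a_{r-1})$, so the two statements carry the same content.
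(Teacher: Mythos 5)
Your proof is correct and follows exactly the paper's route: the paper also derives this corollary from the slicing formula (Theorem \ref{slice}/Corollary \ref{cor:recon}) at $k=r-1$, using the observation that $M/X\cong U_{1,n-|X|}$ so $\mathcal{G}(M/X)=\gamma(0,n-|X|)$. Your additional remarks (recovering $|X|$ from the length of the sequences in $\mathcal{G}(M|X)$, and the equivalence with Proposition \ref{catcurse}) are correct bookkeeping that the paper leaves implicit.
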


Corollary \ref{reconcopoint} is motivated by the theorem of Brylawski
\cite{Brylawskirecon} that the Tutte polynomial is reconstructible
from the deck of unlabeled restrictions to copoints.  With the next
lemma, we can strengthen Corollary \ref{reconcopoint}.

\begin{lemma}\label{Reconstructn}
  The number $n$ of elements of $M$ can be reconstructed from the deck
  of $\mathcal{G}$-invariants of copoints.
\end{lemma}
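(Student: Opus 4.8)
The plan is to recover $n$ from the deck of copoint $\mathcal{G}$-invariants by first recovering the rank $r$ of $M$ and then extracting the size from a single copoint's invariant. First I would note that each copoint $X$ has rank $r-1$, and the $\mathcal{G}$-invariant $\mathcal{G}(M|X)$ lives in $\mathcal{G}(|X|, r-1)$; from the symbols $[\underline{r}]$ appearing in $\mathcal{G}(M|X)$ one reads off both $|X|$ (the length of each sequence) and $r-1$ (the number of $1$'s in each sequence). So the real content is that we can identify which elements of the deck are copoints (as opposed to, say, not knowing what the deck represents) — but the deck is \emph{given} as the multiset of copoint invariants, so each entry already tells us its own size $|X|$ and the common value $r-1$, hence $r$.

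The substantive step is then to pass from the sizes $|X|$ of the copoints to $n = |E|$. Here I would use Proposition~\ref{catcurse} (equivalently Corollary~\ref{reconcopoint}'s underlying combinatorics): the complements $E - X$, as $X$ ranges over all copoints, are exactly the cocircuits of $M$, and every element of $E$ lies in at least one cocircuit (an element lies in no cocircuit iff it is a loop, but a loop lies in the closure of every flat, in particular in every copoint, so $E - X$ omitting a loop would still... ) — more carefully, I would argue that $E = \bigcup_X (E - X)$ over all copoints $X$, since an element $e$ fails to lie in some $E-X$ only if $e \in X$ for every copoint $X$, i.e.\ $e$ lies in the intersection of all copoints, which is $\cl(\emptyset)$; but even loops are detected because the empty flat is itself not a copoint unless $r \le 1$, and the small-rank cases ($r = 0$: no copoints, handle separately; $r=1$: the unique copoint is $\cl(\emptyset)$ and $n - 0 = $ size of a cocircuit) can be checked by hand. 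Given the rank $r$ and the multiset of copoint sizes, one can reconstruct enough of the flag structure via the catenary data recursion in Proposition~\ref{catcurse} to pin down $n$; alternatively, and more cleanly, pick any copoint $X$, recover $|X|$ as above, and recover the number of copoints $Y$ with $E - Y$ of each size from $\mathcal{G}(M|X)$ applied through the catenary recursion — but this is circular, so the direct argument is better.

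The cleanest route, which I would adopt: recover $r$ from any entry of the deck; then recover, for the chosen copoint $X$, the full catenary data $\nu(M|X; \cdot)$ from $\mathcal{G}(M|X)$ via Theorem~\ref{thm:catdata}. Now $\nu(M; 0, 1, \ldots, 1, a_{r-1}, a_r)$-type data for $M$ itself is governed by Proposition~\ref{catcurse}, which expresses catenary data of $M$ as a sum over copoints of catenary data of $M|X$; summing Proposition~\ref{catcurse} over \emph{all} compositions recovers the total flag count of $M$, and matching the copoint sizes $|X|$ (read from the deck) against $n - |E - X|$ forces the single unknown $n$. The main obstacle I anticipate is the low-rank edge cases and the treatment of loops and coloops: a coloop lies in every basis but is \emph{deleted} in forming some copoints, so the copoint sizes can be as small as $n - 1$; and when $M$ has loops, $\cl(\emptyset) \ne \emptyset$ and one must be careful that the copoint invariants still see the loops (they do, since loops lie in $M|X$ for every copoint $X$, so $a_0 = $ (number of loops) is visible in $\mathcal{G}(M|X)$). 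I would dispose of $r \le 1$ directly and run the general argument for $r \ge 2$, where the intersection of all copoints is exactly $\cl(\emptyset)$ and the union of their complements is exactly $E \setminus \cl(\emptyset)$ together with the loop count recovered internally, giving $n$.
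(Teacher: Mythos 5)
Your reductions are fine as far as they go: from any entry of the deck you can read off $r-1$ and $|X|$, and via Theorem \ref{thm:catdata} you get the full catenary data of each $M|X$ (including the number of loops, since every copoint contains all loops). But the step that is actually supposed to produce $n$ is missing. Proposition \ref{catcurse} cannot be invoked the way you want: its right-hand side sums over copoints with $|E-X|=a_r$, and $|E-X|=n-|X|$, so sorting the deck entries into those sums already presupposes knowledge of $n$ --- this is exactly the circularity you noticed in your second paragraph, and your ``cleanest route'' does not escape it. Summing Proposition \ref{catcurse} over all compositions only recovers the total number of flags of $M$, which equals the sum of the flag counts of the $M|X$ and carries no information about $n$; and the covering fact $E=\cl(\emptyset)\cup\bigcup_X(E-X)$ is not quantitative, since the deck reveals neither the sets $E-X$ nor their sizes. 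So no equation determining $n$ is ever written down, and ``matching the copoint sizes $|X|$ against $n-|E-X|$'' is not an argument.

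The paper closes this gap with a genuine additional identity, adapted from Brylawski's hyperplane-reconstruction argument: counting triples consisting of a chain $(X_h,\ldots,X_{k-1})$ of flats of prescribed sizes, an element $x\notin X_{k-1}$, and the flat $\cl(X_{k-1}\cup x)$ gives $F_{h,k-1}(s_h,\ldots,s_{k-1})\,(n-s_{k-1})=\sum_{s_k}F_{h,k}(s_h,\ldots,s_k)\,(s_k-s_{k-1})$. Iterating this from $h=0$ and using $f_0(s_0)=1$, where $s_0$ is the number of loops, yields $1=\sum F_{0,r-1}(s_0,\ldots,s_{r-1})\prod_{j=0}^{r-2}(s_{j+1}-s_j)/(n-s_j)$, and $F_{0,r-1}(s_0,\ldots,s_{r-1})$ equals $\sum_{X\,\mathrm{a}\,\mathrm{copoint},\,|X|=s_{r-1}}\nu(M|X;s_0,s_1-s_0,\ldots,s_{r-1}-s_{r-2})$, which is computable from the deck alone. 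Since the right-hand side is strictly decreasing in $n$ for $n>\max\{s_{r-1}\}$, the equation has a unique admissible solution, namely $n$. Some device of this kind --- an identity in which $n$ appears, whose other ingredients are deck-computable, together with a monotonicity or uniqueness argument --- is precisely what your proposal needs and does not supply.
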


\begin{proof} 
  The proof is an adaptation of Brylawski's argument.  We begin with
  an identity.  Using the notation in Section \ref{sec:parameters},
  \[
  F_{h,k-1}(s_h,s_{h+1},\ldots,s_{k-1})(n-s_{k-1})= \sum_{s_k}
  F_{h,k}(s_h,s_{h+1},\ldots,s_{k-1},s_k)(s_k-s_{k-1}).
  \]
  This identity holds because both sides equal the number of triples
  \[
  ((X_h,X_{h+1},\ldots,X_{k-1}), x, X),
  \]
  where $|X_i| = s_i$, the element $x$ is not in $X_{k-1}$, and $X =
  \cl (X_{k-1} \cup x)$. In particular,
  \[
  f_h(s_h)(n-s_h) = \sum_{s_{h+1}} F_{h,h+1}(s_h,s_{h+1})(s_{h+1}
  -s_h).
  \]
  Iterating the identity, we obtain
  \begin{eqnarray*}
    f_h(s_h) &=& \sum_{s_{h+1}}  F_{h,h+1}(s_h,s_{h+1})\frac {(s_{h+1} -s_h)}{(n-s_h)} 
    \\
    &=& \sum_{s_{h+1},s_{h+2}}  F_{h,h+2}(s_h,s_{h+1},s_{h+2})\frac
    {(s_{h+1} -s_h)(s_{h+2}-s_{h+1})}{(n-s_h)(n-s_{h+1})}   \\ 
    &\vdots& \\
    &=& 
    \sum_{s_{h+1},s_{h+2},\ldots,s_{r-1}}
    F_{h,r-1}(s_h,s_{h+1},\ldots,s_{r-1})  \left(\prod_{j=h}^{r-2}
      \frac {s_{j+1} -s_j}{n-s_j} \right).   
  \end{eqnarray*}
  Let $s_0$ be the number of loops in $M$. Then, since $f_0(s_0) = 1$,
  we have
  \begin{equation}\label{equationforn}
    1 =  \sum_{s_1,s_2,\ldots,s_{r-1}}  
    F_{0,r-1}(s_0,s_1,s_2,\ldots,s_{r-1})  \left(\prod_{j=0}^{r-2}
      \frac {s_{j+1} -s_j}{n-s_j} \right).   
  \end{equation} 
  In addition, we also have
  \[
  F_{0,r-1}(s_0,s_1,s_2,\ldots,s_{r-1}) = \sum_{\mathrm{copoint}\,
    X,\, |X| = s_{r-1}} \nu(M|X; s_0,s_1-s_0,s_2-s_1,
  \ldots,s_{r-1}-s_{r-2}),
  \]
  and hence, we can calculate the right-hand side of equation
  (\ref{equationforn}) given the deck
  $\{\mathcal{G}(M|X): \, X \in \mathcal{F}_{r-1}\}$, and solve for
  $n$. The number $n$ of elements is a solution greater than
  $\max\{s_{r-1}\}$, the maximum number of elements in a copoint, and
  this solution is unique since the right-hand side is a strictly
  decreasing function in $n$. In other words, equation
  (\ref{equationforn}), and hence the deck, determines the number of
  elements in $M$.
\end{proof}  

Corollary \ref{reconcopoint} and Lemma \ref{Reconstructn} yield the
following result.

\begin{thm}\label{thm:reconcopoint}
  The $\mathcal{G}$-invariant of $M$ can be reconstructed from the
  deck $\{\mathcal{G}(M|X): \, X \text{ a copoint}\}$.  Dually, we can
  reconstruct $\mathcal{G}(M)$ from the deck $\{\mathcal{G}(M/Y): \, Y
  \text{ a circuit}\}$.
\end{thm}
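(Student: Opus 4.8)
The plan is to get the first (primal) assertion as an immediate consequence of the two results just proved, and then to obtain the dual assertion by passing to $M^*$.

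For the primal assertion I would argue as follows. Lemma~\ref{Reconstructn} tells us that the integer $n$ is determined by the deck $\{\mathcal{G}(M|X):\,X\text{ a copoint}\}$, and Corollary~\ref{reconcopoint} tells us that $\mathcal{G}(M)$ is determined by $n$ together with that same deck. Composing these two reconstructions shows that the copoint deck alone determines $\mathcal{G}(M)$. Nothing more is needed here: the hypotheses of both cited results take exactly the deck of copoint restrictions as their input, so there is no gap to fill (modulo the small-rank conventions already in force in the statements being invoked).

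For the dual assertion I would dualize term by term. Starting from the deck $\{\mathcal{G}(M/Y):\,Y\text{ a circuit of }M\}$, apply the specialization of Proposition~\ref{prop:dual} to each entry to obtain the multiset $\{\mathcal{G}((M/Y)^*):\,Y\text{ a circuit of }M\}$. Now use the identity $(M/Y)^*=M^*\del Y$ together with the fact that the circuits of $M$ are precisely the cocircuits of $M^*$, hence precisely the complements $E-Z$ of the copoints $Z$ of $M^*$. Writing $Z=E-Y$, one has $M^*\del Y=M^*|Z$, so as $Y$ ranges over the circuits of $M$ the sets $M^*\del Y$ range over all copoint restrictions of $M^*$. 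Therefore the original deck determines $\{\mathcal{G}(M^*|Z):\,Z\text{ a copoint of }M^*\}$. Applying the primal assertion to $M^*$ reconstructs $\mathcal{G}(M^*)$ from this copoint deck, and a final application of Proposition~\ref{prop:dual} turns $\mathcal{G}(M^*)$ back into $\mathcal{G}(M)$.

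I do not expect a genuine obstacle: the entire content of the first half is the composition of Lemma~\ref{Reconstructn} with Corollary~\ref{reconcopoint}, and the second half is a routine duality translation. The one point that deserves a careful sentence or two is precisely that translation of decks: one must check that applying Proposition~\ref{prop:dual} entrywise to an unlabeled multiset of $\mathcal{G}$-invariants is well defined, that $(M/Y)^*=M^*\del Y$, and that complementation is a bijection between the circuits of $M$ and the copoints of $M^*$, so that $\{M^*\del Y : Y\text{ a circuit of }M\}$ is exactly $\{M^*|Z : Z\text{ a copoint of }M^*\}$. Small-rank degeneracies (for instance matroids of rank $0$, where the copoint deck is empty) are the only thing to flag, and they are already handled by the conventions of the results being cited.
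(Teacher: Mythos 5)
Your proposal is correct and follows the paper's own route: the first assertion is exactly the composition of Lemma~\ref{Reconstructn} with Corollary~\ref{reconcopoint}, and the dual assertion is obtained, as the paper intends, by applying Proposition~\ref{prop:dual} entrywise and using $(M/Y)^*=M^*\del Y$ together with the bijection between circuits of $M$ and complements of copoints of $M^*$. Your extra care about the deck translation and degenerate ranks is fine but adds nothing beyond the paper's argument.
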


The proof of the first assertion in Theorem \ref{thm:reconcopoint}
does not use the $\mathcal{G}$-invariants $\mathcal{G}(M|X)$
individually but the sums
\[
\mathcal{H}(M;s) = \sum_{X \,\mathrm{a}\,\mathrm{copoint}, |X| = s}
\mathcal{G}(M|X).
\]  
Hence, we have the following theorem giving the exact information
needed to reconstruct $\mathcal{G}(M)$ from a deck derived from
copoints.

\begin{thm}\label{thm:ultimate}
  The $\mathcal{G}$-invariant $\mathcal{G}(M)$ and the deck
  $\{\mathcal{H}(M;s)\}$ (consisting of the sums $\mathcal{H}(M;s)$
  that are non-zero) can be constructed from each other.
\end{thm}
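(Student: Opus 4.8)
The plan is to prove the two directions of "constructible from each other" separately. One direction is immediate: the deck $\{\mathcal{H}(M;s)\}$ is defined as a sum of the $\mathcal{G}(M|X)$ over copoints $X$ of each size $s$, so given the individual $\mathcal{G}$-invariants $\mathcal{G}(M|X)$ for $X$ a copoint (equivalently, given $\mathcal{G}(M)$ plus the deck $\{\mathcal{G}(M|X)\}$ of Theorem~\ref{thm:reconcopoint}), one simply groups by size. But actually the cleaner reading, and the one I would use, is: from $\mathcal{G}(M)$ alone we can extract $\{\mathcal{H}(M;s)\}$, and conversely from $\{\mathcal{H}(M;s)\}$ alone we can recover $\mathcal{G}(M)$. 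So the two things to establish are (a) $\mathcal{G}(M) \rightsquigarrow \{\mathcal{H}(M;s)\}$ and (b) $\{\mathcal{H}(M;s)\} \rightsquigarrow \mathcal{G}(M)$.

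For direction (b), I would simply invoke the machinery already assembled. The proof of Theorem~\ref{thm:reconcopoint} (via Corollary~\ref{reconcopoint} and Lemma~\ref{Reconstructn}) only ever uses the copoint deck through the aggregated sums $\mathcal{H}(M;s)$: Lemma~\ref{Reconstructn} recovers $n$ from equation~(\ref{equationforn}), whose right-hand side is expressed via $F_{0,r-1}(s_0,\ldots,s_{r-1}) = \sum_{|X|=s_{r-1}} \nu(M|X;\ldots)$, i.e. via $\mathcal{H}(M;s_{r-1})$; and then Corollary~\ref{reconcopoint}/Proposition~\ref{catcurse} reconstructs $\mathcal{G}(M)$ from $n$ together with $\sum_{|E-X|=a_r}\nu(M|X;a_0,\ldots,a_{r-1})$, again an aggregate over copoints of fixed size. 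So (b) is a matter of pointing out that the existing proof factors through $\{\mathcal{H}(M;s)\}$.

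For direction (a), the point is that $\mathcal{H}(M;s)$ is itself a specialization/linear image of $\mathcal{G}(M)$, so it is determined by $\mathcal{G}(M)$. Concretely, by Proposition~\ref{catcurse},
\[
\mathcal{H}(M;s) = \sum_{(a_0,\ldots,a_{r-1})} \nu(M;a_0,\ldots,a_{r-1},n-s)\,\gamma(a_0,\ldots,a_{r-1}),
\]
where the $\nu(M;\ldots)$ are the catenary data of $M$, hence (Theorem~\ref{thm:catdata}) linear functions of the $\mathcal{G}$-invariant coefficients of $M$, and $n = s + |E-X|$ is itself determined by $\mathcal{G}(M)$ (the size $n$ is read off, e.g., from the length of any symbol occurring in $\mathcal{G}(M)$, or from the fact that the total is $n!$). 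Thus each $\mathcal{H}(M;s)$ is computed from $\mathcal{G}(M)$. Finally one should note that the nonzero $\mathcal{H}(M;s)$ are exactly those for which some copoint has size $s$, so presenting "the deck $\{\mathcal{H}(M;s)\}$ consisting of the nonzero entries" loses no information relative to knowing all entries.

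The main obstacle, such as it is, is purely expository rather than mathematical: one must make sure the argument for (b) genuinely only uses the aggregated sums and does not secretly need to distinguish copoints of the same size—this is why I would walk through the proof of Lemma~\ref{Reconstructn} and Corollary~\ref{reconcopoint} and flag at each use of the copoint deck that it appears only in the combination $\sum_{|X|=s}$, i.e. as $\mathcal{H}(M;s)$. Once that bookkeeping is done, both directions are short.
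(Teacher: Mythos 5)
Your proposal is correct and matches the paper's own justification: the paper proves this theorem simply by observing that the proof of Theorem~\ref{thm:reconcopoint} (via Lemma~\ref{Reconstructn} and Proposition~\ref{catcurse}) uses the copoint deck only through the size-aggregated sums $\mathcal{H}(M;s)$, which is exactly your direction (b). Your direction (a), writing $\mathcal{H}(M;s)=\sum_{(a_0,\ldots,a_{r-1})}\nu(M;a_0,\ldots,a_{r-1},n-s)\,\gamma(a_0,\ldots,a_{r-1})$ via Proposition~\ref{catcurse} and Theorem~\ref{thm:catdata}, just makes explicit the converse that the paper leaves implicit, so the two arguments are essentially identical.
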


\section{Configurations}\label{sec:configurations}

The main result in this section, Theorem \ref{thm:configgivescat},
extends a result by Eberhardt \cite{jens} for the Tutte polynomial
(Theorem \ref{thm:jens} below) to the $\mathcal{G}$-invariant.  We
also show that the converse of Theorem \ref{thm:configgivescat} is
false.

These results use the configuration of a matroid, which Eberhardt
defined.  Recall that a set $X$ in a matroid $M$ is \emph{cyclic} if
$X$ is a union of circuits, or, equivalently, $M|X$ has no coloops.
The set $\mathcal{Z}(M)$ of cyclic flats of $M$, ordered by inclusion,
is a lattice.  The join $X\vee Y$ in $\mcZ(M)$ is $\cl(X\cup Y)$, as
in the lattice of flats; the meet $X\land Y$ is the union of the
circuits in $X\cap Y$.  It is well-known that a matroid $M$ is
determined by $E(M)$ and the pairs $(X,r(X))$ for $X\in
\mathcal{Z}(M)$.  If deleting all coloops of a matroid $M$ yields
$M'$, then we easily get $\mathcal{G}(M)$ from $\mathcal{G}(M')$, so
we focus on matroids without coloops.  (Similarly, focusing on
matroids that also have no loops, as in the proof of Theorem
\ref{thm:configgivescat}, is justified.)  The \emph{configuration} of
a matroid $M$ with no coloops is the triple $(L,s,\rho)$ where $L$ is
a lattice that is isomorphic to $\mathcal{Z}(M)$, and $s$ and $\rho$
are functions on $L$ where if $x\in L$ corresponds to $X\in
\mathcal{Z}(M)$, then $s(x)=|X|$ and $\rho(x) = r(X)$.  The
configuration does not record the cyclic flats, and we do not
distinguish between $L$ and lattices isomorphic to it, so
non-isomorphic matroids (such as the pair in Figure
\ref{fig:sameconfig}) may have the same configuration.  Two paving
matroids of the same rank $r$ with the same number $f_{r-1}(s)$ of
size-$s$ copoints, for each $s$, have the same configuration.
Gim\'enez constructed a set of $n!$ non-paving matroids of rank $2n+2$
on $4n+5$ elements, all with the same configuration (see \cite[Theorem
5.7]{cyc}).  We now state Eberhardt's result.

\begin{thm}\label{thm:jens}
  For a matroid with no coloops, its Tutte polynomial can be derived
  from its configuration.
\end{thm}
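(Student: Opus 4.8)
The plan is to build $T(M;x,y)$ from the configuration by a double induction: an outer induction on $|E(M)|$ driven by a convolution formula, with the ``boundary'' single‑variable pieces handled by a Möbius/Whitney‑number argument that is where the real content lies. First I would reduce to the case that $M$ is loopless and coloopless: $M$ has no coloops by hypothesis, and deleting its loops multiplies $T(M;x,y)$ by $y^{\ell}$ with $\ell=s(\hat 0)$ read off from the configuration, while the configuration of the loopless reduct is got from $(\mathcal{Z}(M),s,\rho)$ by moving $\hat 0$ to $\emptyset$ and lowering $s$ by $\ell$. The engine is the convolution formula of Kook, Reiner, and Stanton,
\[
T(M;x,y)=\sum_{A\subseteq E} T(M|A;0,y)\,T(M/A;x,0),
\]
together with the fact that $T(N;0,y)=0$ when $N$ has a coloop and $T(N;x,0)=0$ when $N$ has a loop. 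A summand therefore vanishes unless $M|A$ is coloopless (so $A$ is a union of circuits) and $M/A$ is loopless (so $A$ is a flat), i.e.\ unless $A\in\mathcal{Z}(M)$. Hence
\[
T(M;x,y)=\sum_{Z\in\mathcal{Z}(M)} T(M|Z;0,y)\,T(M/Z;x,0).
\]

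Next I would feed this into the induction on $|E(M)|$. For $Z\in\mathcal{Z}(M)$ the configuration of $M|Z$ is the principal order ideal of $(\mathcal{Z}(M),s,\rho)$ below $Z$ with inherited labels, and that of $M/Z$ is the principal filter above $Z$ with $s,\rho$ shifted by $s(Z),\rho(Z)$; both are standard facts about cyclic flats of restrictions and contractions, and both are visibly determined by the configuration of $M$. When $\emptyset\ne Z\ne E$ the matroids $M|Z$ and $M/Z$ have strictly fewer elements than $M$, so by the inductive hypothesis their Tutte polynomials—hence $T(M|Z;0,y)$ and $T(M/Z;x,0)$—are determined by their configurations. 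The extreme terms are $Z=\hat 0=\emptyset$, contributing $T(M;x,0)$, and $Z=\hat 1=E$, contributing $T(M;0,y)$. Thus $T(M;x,y)-T(M;x,0)-T(M;0,y)$ is already determined by the configuration, and the problem reduces to showing that the two boundary polynomials $T(M;x,0)$ and $T(M;0,y)$ are determined by the configuration.

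For the boundary polynomials I would pass to characteristic polynomials: $T(M;x,0)=(-1)^{r}\chi_M(1-x)$ and $T(M;0,y)=T(M^*;y,0)=(-1)^{n-r}\chi_{M^*}(1-y)$, and since the configuration of $M^*$ is the order dual of that of $M$ with $s,\rho$ transformed in the obvious way, it suffices to show that $\chi_N$ is determined by the configuration for a loopless matroid $N$ (using $\chi_N=\chi_{\mathrm{si}(N)}$ one may essentially assume $N$ simple). Writing $\chi_N(\lambda)=\sum_{F}\mu_{L(N)}(\hat 0,F)\lambda^{r-r(F)}$, the task becomes to show that each aggregate Whitney number $\sum_{r(F)=k}\mu_{L(N)}(\hat 0,F)$, equivalently each number of rank‑$k$ flats, is a configuration invariant. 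Here I would decompose the flats of $N$ by the largest cyclic flat they contain: every flat $F$ is uniquely $F=Z\sqcup J$ with $Z=\cl(\mathrm{cyc}(F))\in\mathcal{Z}(N)$ and $J$ an independent flat of $N/Z$, with $r(F)=\rho(Z)+|J|$. This turns the count of rank‑$k$ flats of $N$ into a sum over $Z\in\mathcal{Z}(N)$ of counts of independent flats of the smaller matroids $N/Z$; combining this with the inductive hypothesis and the elementary double‑counting identities relating independent sets, flats, and long flats (for instance $\binom{|E|}{2}=\sum_{\text{lines }L}\binom{|L|}{2}$ and its higher‑rank analogues), one pins these counts down from the configuration.

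I expect the last step to be the main obstacle. The lattice of flats $L(N)$ is \emph{not} a configuration invariant—the two matroids of Figure~\ref{fig:sameconfig} already have non‑isomorphic lattices of flats, since in one the two long lines meet only at $\hat 0$ and in the other they meet in a point—so one cannot simply compute $\chi_N$ from $L(N)$. What must be proved is that certain \emph{aggregate} invariants of $L(N)$ (the number of flats of each rank, equivalently the number of cyclic sets of each nullity, equivalently the summed Whitney numbers of the first kind) \emph{are} configuration invariants; the difficulty is to arrange the recursion via the cyclic‑core decomposition so that it closes up, rather than repeatedly reintroducing the quantities one is trying to determine, which is exactly what the auxiliary double‑counting relations are there to break.
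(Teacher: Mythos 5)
Your reduction via the convolution formula $T(M;x,y)=\sum_{A} T(M|A;0,y)\,T(M/A;x,0)$ is sound as far as it goes: the sum does collapse to $A\in\mathcal{Z}(M)$, the minors $M|Z$ and $M/Z$ are coloop-free for cyclic $Z$, their configurations are the appropriate intervals in the configuration of $M$ (cf.\ Lemma \ref{lem:configminors}), and induction on the ground set handles every term except $Z=\emptyset$ and $Z=E(M)$. (Note that the paper does not reprove this theorem; it cites Eberhardt, and its parenthetical remark after the statement sketches a different route: an inclusion/exclusion over chains of cyclic flats, in the spirit of Lemma \ref{thm:configgivescat}.2, showing that the configuration determines the number of subsets of each size and rank, which yields $T(M;x,y)$ directly and makes the convolution detour unnecessary.)

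The genuine gap is in your treatment of the boundary terms $T(M;x,0)$ and $T(M;0,y)$, which is where all the content of the theorem lives, and your outline there has both a false step and an uncompleted step. False step: the aggregate Whitney numbers of the first kind $\sum_{r(F)=k}\mu_{L(N)}(\hat 0,F)$ are not ``equivalently'' the numbers of rank-$k$ flats; neither determines the other (the flat counts $f_k(s)$ are not Tutte invariants, and the flat numbers $W_k$ do not determine $\chi_N$), so even a successful count of flats by rank, or by rank and size, would not give you $\chi_M$ and $\chi_{M^*}$. Uncompleted step: the cyclic-core decomposition $F=Z\sqcup J$ expresses $W_k(N)$ as a sum over $Z\in\mathcal{Z}(N)$ of counts of independent flats of $N/Z$, but for $Z=\emptyset$ the matroid is $N$ itself, so the recursion does not close, and for $Z\neq\emptyset$ your inductive hypothesis only supplies $T(N/Z)$ from the configuration, which does not determine the number of independent flats (again not a Tutte invariant); identities such as $\binom{|E|}{2}=\sum_{L}\binom{|L|}{2}$ do not break this circularity in general rank. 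What is actually required at this point is the nontrivial inclusion/exclusion over subsets of $\mathcal{Z}$, collapsed to chains by pairing each antichain-containing set with its join, that the paper carries out for $\iota(N)$ in Lemma \ref{thm:configgivescat}.2 --- applied either to independent flats or, more directly, to the number of size-$i$, rank-$j$ subsets. As it stands, your argument reduces the theorem to its hardest special case and stops there.
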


We use Theorem \ref{thm:jens} to prove Theorem
\ref{thm:configgivescat}. (With these techniques, one can give another
proof of Theorem \ref{thm:jens} since knowing $T(M;x,y)$ is equivalent
to knowing, for each pair $i$, $j$, the number of subsets of $E(M)$ of
size $i$ and rank $j$, and an inclusion/exclusion argument like that
in the proof of Lemma \ref{thm:configgivescat}.2 shows that the
configuration gives the number of such subsets.  We use only a special
case of Theorem \ref{thm:jens}, namely, the configuration gives the
number of bases.)  We also use the following elementary lemma about
cyclic flats.

\begin{lemma}\label{lem:configminors}
  Assume that $M$ has neither loops nor coloops.  If $X$ is any cyclic
  flat of $M$, then $\mathcal{Z}(M|X)$ is the interval $[\emptyset,X]$
  in $\mathcal{Z}(M)$, and $$\mathcal{Z}(M/X)=\{F-X\,:\,F\in \mcZ(M)
  \text{ and } X\subseteq F\},$$ so the lattice $\mathcal{Z}(M/X)$ is
  isomorphic to the interval $[X,E(M)]$ in $\mathcal{Z}(M)$.  Thus,
  from the configuration of $M$, we get the configuration of any
  restriction to, or contraction by, a cyclic flat of $M$.  Likewise,
  the configuration of the truncation $\mathrm{Trun}(M)$ can be
  derived from that of $M$.
\end{lemma}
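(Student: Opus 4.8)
The plan is to treat the four assertions in order, bootstrapping the second from the first via the standard complementation duality for cyclic flats and reducing the last two to bookkeeping on lattices of flats. Throughout, $M$ is loopless and coloopless, $E=E(M)$, and $r=r(M)$. \emph{The restriction.} Since $X$ is a flat, $\cl_M(G)\subseteq\cl_M(X)=X$ for every $G\subseteq X$, so the flats of $M|X$ are exactly the flats of $M$ contained in $X$; and cyclicity is restriction-local, i.e.\ $(M|X)|Y=M|Y$ for $Y\subseteq X$, so $Y$ is cyclic in $M|X$ iff it is cyclic in $M$. Hence $\mcZ(M|X)=\{Y\in\mcZ(M):Y\subseteq X\}$, which is the interval $[\emptyset,X]$ of $\mcZ(M)$ because $M$ is loopless (so $\emptyset=\cl(\emptyset)$ is the bottom of $\mcZ(M)$) and $X\in\mcZ(M)$; the meet and join agree since they are determined by the containment order. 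Also $M|X$ is loopless, and coloopless because $X$ is cyclic, so its configuration is defined.

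\emph{The contraction.} I would derive this from the restriction case by duality. Using $(M/X)^*=M^*\del X=M^*|(E-X)$, together with the facts recorded in the excerpt that a set is a flat iff its complement is cyclic in the dual and cyclic iff its complement is a flat of the dual — so cyclic flats correspond under complementation, and in particular $E-X\in\mcZ(M^*)$ — the first assertion applied to $M^*$ and its cyclic flat $E-X$ gives $\mcZ\bigl((M/X)^*\bigr)=\{W\in\mcZ(M^*):W\subseteq E-X\}$. For $G\subseteq E-X$ this means $G\in\mcZ(M/X)$ iff $E-(X\cup G)\in\mcZ(M^*)$ iff $X\cup G\in\mcZ(M)$; setting $F=X\cup G$ yields $\mcZ(M/X)=\{F-X:F\in\mcZ(M),\,X\subseteq F\}$, and $F\mapsto F-X$ is an order isomorphism from $[X,E]$ onto $\mcZ(M/X)$ with inverse $G\mapsto G\cup X$. (A direct alternative: $(M/X)|(F-X)=(M|F)/X$; an element $e\in F-X$ is a coloop of $(M|F)/X$ iff it is a coloop of $M|F$; and an element $e\in X$ cannot be a coloop of $M|F$, since $X-e\subseteq F-e$ gives $\cl(X-e)\subseteq\cl(F-e)$, so such an $e$ would be a coloop of $M|X$, contradicting that $X$ is cyclic.) Finally $M/X$ is loopless, as $e\notin\cl(X)=X$, and coloopless, by taking $F=E$ above, so its configuration is defined.

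\emph{Configurations of minors and of the truncation.} Let $x\in L$ correspond to $X$. By the first assertion the lattice of cyclic flats of $M|X$ is $[\hat{0},x]\subseteq L$ with $s,\rho$ restricted, and by the second it is $[x,\hat{1}]\subseteq L$ for $M/X$, where now $|F-X|=s(F)-s(x)$ and $r_{M/X}(F-X)=\rho(F)-\rho(x)$; both configurations are thus read off from $(L,s,\rho)$. For the truncation assume $r\ge2$ (the case $r=1$ is immediate). The flats of $\mathrm{Trun}(M)$ of rank $\le r-2$ are exactly those of $M$, and $E$ is its unique rank-$(r-1)$ flat; for a flat $F$ of $M$ with $r_M(F)\le r-2$ one has $(\mathrm{Trun}(M))|F=M|F$, so $F$ is cyclic in $\mathrm{Trun}(M)$ iff $F\in\mcZ(M)$; and $E$ is cyclic in $\mathrm{Trun}(M)$, since $r_M(E-e)\ge r-1$ forces $r_{\mathrm{Trun}(M)}(E-e)=r-1$ for every $e$, so $\mathrm{Trun}(M)$ has no coloops. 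Hence $\mcZ(\mathrm{Trun}(M))=\{F\in\mcZ(M):\rho(F)\le r-2\}\cup\{E\}$, which as a poset is the induced subposet of $\mcZ(M)$ on $\{x\in L:\rho(x)\le r-2\}$ with a new top adjoined; since this is genuinely a lattice of cyclic flats, its configuration is recovered from $(L,s,\rho)$ by keeping $s,\rho$ on the old part and assigning the new top the values $s=s(\hat{1})=|E|$ and $\rho=\rho(\hat{1})-1=r-1$.

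\emph{Main obstacle.} There is no serious difficulty here — this is the advertised elementary lemma — but the step needing the most care is the contraction case: correctly tracking which elements can become coloops under contraction (equivalently, getting the complementation duality for cyclic flats aligned), together with correctly describing the lattice of flats of a truncation and the fate of its top flat.
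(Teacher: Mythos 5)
Your proof is correct; the paper in fact states this lemma without proof (calling it elementary), and your argument is the standard one that the authors evidently have in mind: flats and cyclicity are unchanged under restriction to a flat, the contraction case follows by complementation duality for cyclic flats (or by the direct coloop-tracking argument you sketch), and the resulting shifts $s(y)-s(x)$, $\rho(y)-\rho(x)$ match exactly the formulas the paper later uses for the configuration of $M|D_i/D_{i-1}$. Your treatment of the truncation (cyclic flats of rank at most $r-2$ together with a new top $E$ of rank $r-1$) is likewise correct and consistent with how the lemma is applied in the proof of Theorem \ref{thm:configgivescat}.
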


\begin{thm}\label{thm:configgivescat}
  For a matroid $M$ with no coloops, its catenary data, and so
  $\mathcal{G}(M)$, can be derived from its configuration.
\end{thm}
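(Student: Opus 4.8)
The plan is to compute the catenary data $\nu(M; a_0, a_1, \ldots, a_r)$ directly from the configuration by induction on the rank $r$, using the slicing recursion on copoints (Proposition~\ref{catcurse}) and the fact, supplied by a special case of Theorem~\ref{thm:jens}, that the configuration determines the number of bases of $M$. As remarked in the excerpt, we may assume $M$ has neither loops nor coloops, so every flat of rank $1$ is a point (a single element), $a_0 = 0$, and $a_1 \geq 1$; the base case $r \leq 1$ is trivial. The key structural tool is Lemma~\ref{lem:configminors}: for each cyclic flat $X$ of $M$, the configuration determines the configurations of $M|X$ and $M/X$, hence (by induction on rank applied to $M|X$, whose rank is $\rho(x) < r$) the catenary data of $M|X$. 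The difficulty is that the flats appearing in a flag $(X_i)$ need not be cyclic, so one cannot simply recurse along an arbitrary flag; the remedy is to pass through the \emph{cyclic core} of each flat.

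First I would set up the bookkeeping that lets the configuration see chains of flats even though it only records cyclic flats. For a flat $F$ of $M$, write $F^\circ$ for the cyclic part of $F$ (the union of the circuits contained in $F$), so $M|F = (M|F^\circ) \oplus U_{c,c}$ where $c = |F| - |F^\circ| = r(F) - r(F^\circ)$ is the number of coloops of $M|F$. By Lemma~\ref{ones}, a flag $(X_i)$ with composition $(a_0, a_1, \ldots, a_r)$ has $a_{i+1} = 1$ exactly when passing from $X_i$ to $X_{i+1}$ adjoins a coloop; thus the ``jumps'' in which $a_{i+1} > 1$ are precisely the steps at which the cyclic part genuinely grows, and the intervening runs of $1$'s correspond to freely adding coloops. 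Consequently a flag of $M$ is determined by the data of (i) which cyclic flats it passes through as its flats' cyclic parts, and (ii) how the non-cyclic flats are interleaved, and the number of flags refining a given choice of cyclic-flat chain is an explicit product of falling factorials counting the orderings of coloops — exactly the combinatorics already computed in the proof of Lemma~\ref{gammabasis}. This is the step I expect to be the main obstacle: one must show that the number of flags with a prescribed composition can be written as a sum, over chains of cyclic flats of $M$ with prescribed ranks and sizes, of a purely numerical weight depending only on those ranks and sizes.

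Next I would carry out that count. Fix a composition $(0, a_1, \ldots, a_r)$. Let $1 = j_0 < j_1 < \cdots < j_\ell$ be the indices where $a_{j_i} > 1$ together with $j_0 = $ the first index; equivalently, decompose $(a_1, \ldots, a_r)$ into maximal blocks according to where cyclic rank jumps. A flag of $M$ with this composition yields, on taking cyclic parts, a chain $\emptyset = C_0 \subsetneq C_1 \subsetneq \cdots \subsetneq C_\ell = E$ of cyclic flats with $r(C_i)$ and $|C_i|$ determined by the composition. Conversely, given such a chain of cyclic flats, the proof of Lemma~\ref{gammabasis} (applied inside each ``slice'' $M/C_{i-1}$ restricted to the appropriate flat, or directly by the ordered-basis count of Lemma~\ref{lem:countbybases}) shows that the number of flags of $M$ whose cyclic-part chain is $(C_i)$ and whose composition is $(0, a_1, \ldots, a_r)$ equals a product of terms of the form $(t)_k$ in which every $t$ and $k$ is a difference of the numbers $r(C_i), |C_i|$ — a quantity depending only on the configuration-labels along the chain. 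Summing over all chains of cyclic flats with the prescribed rank and size labels gives $\nu(M; 0, a_1, \ldots, a_r)$, and the set of such chains, together with their numerical weights, is visible in the configuration $(L, s, \rho)$. (At the very end of the recursion, when $C_\ell = E$ itself is reached, one invokes the number of bases of each $M|C_i$, which by induction and Theorem~\ref{thm:jens} is known from the configuration.)

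Finally, having expressed each coordinate of the catenary data of $M$ as such a configuration-determined sum, I would conclude by Theorem~\ref{thm:catdata} that $\mathcal{G}(M)$ itself is determined. For organizational cleanliness I would isolate the two technical ingredients as sublemmas — ``Lemma~\ref{thm:configgivescat}.1'': the number of flags of $M$ with a given composition refining a given cyclic-flat chain is an explicit product of falling factorials in the rank/size labels; and ``Lemma~\ref{thm:configgivescat}.2'': the configuration determines, for every choice of rank and size labels, the number of chains of cyclic flats realizing those labels (this is a straightforward inclusion/exclusion over intervals of $L$, counting maximal-under-the-label chains and correcting for coincidences, and is where the induction on rank via Lemma~\ref{lem:configminors} feeds in, since a chain of cyclic flats of $M/C_1$ corresponds to an upper interval in $L$) — and then assemble $\nu(M; a_0, a_1, \ldots, a_r)$ from these. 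The crux, again, is sublemma~1: making precise that ``collapsing a flag to its chain of cyclic parts'' is surjective with fibers of configuration-computable size.
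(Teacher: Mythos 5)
Your overall skeleton (collapse each flag to the chain of cyclic parts of its flats, count the fibers of this collapse, and sum over chains visible in the configuration) is the same as the paper's, but your ``sublemma 1'' --- the step you yourself single out as the crux --- is false as stated, and this is a genuine gap. The fiber of the collapse map over a chain $\emptyset=C_0\subset C_1\subset\cdots\subset C_\ell=E$ is \emph{not} of a size expressible as falling factorials in the ranks and sizes of the $C_i$. What the fiber count actually requires is: for each consecutive pair, the set $W_i$ of coloops inserted between $C_{i-1}$ and $C_i$ must be an \emph{independent copoint} of the minor $M|C_i/C_{i-1}$ (the intermediate sets in the flag must be flats, so $C_{i-1}\cup W_i$ must be a flat of rank $r(C_i)-1$, not merely an independent set of the right size). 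Hence the fiber size factors as a purely numerical interleaving count (your falling-factorial/permutation part, which is fine) \emph{times} $\prod_i\iota(M|C_i/C_{i-1})$, where $\iota(N)$ is the number of independent copoints of $N$; and $\iota(M|C_i/C_{i-1})$ depends on the cyclic flats of $M$ lying strictly between $C_{i-1}$ and $C_i$ that are \emph{not} on the chain, so it is not a function of the chain's labels. A concrete counterexample to your claim: for $M_1$ and $M_2$ of Figure \ref{fig:2exp} (rank $3$, seven elements, no loops or coloops, $E$ cyclic in both), the flags with composition $(0,1,1,5)$ all collapse to the chain $\emptyset\subset E$, whose labels agree in the two matroids, yet the fiber sizes are $4$ and $6$ respectively (twice the number of $2$-point lines). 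The theorem survives because the full configurations of $M_1$ and $M_2$ differ, but your claim that the fiber is determined by the labels along the chain does not.

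Filling this gap is exactly the content the paper supplies and your proposal omits: one must show that $\iota(N)$, for $N=M|C_i/C_{i-1}$, is computable from the configuration. The paper does this by an inclusion/exclusion over the cyclic flats of $N$ (cancelling antichain terms so that only chains survive), which reduces $\iota(N)$ to numbers of bases of $\mathrm{Trun}(N)$, of $N|F_k/F_{k-1}$ along chains of cyclic flats of $N$, and of $\mathrm{Trun}(N/F_p)$; these base counts come from Theorem \ref{thm:jens} applied to the configurations obtained via Lemma \ref{lem:configminors}. So Theorem \ref{thm:jens} is needed throughout, for every consecutive-interval minor, not just ``at the very end of the recursion'' as in your parenthetical. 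Two smaller points: your ``sublemma 2'' (counting chains of cyclic flats with given labels) needs no inclusion/exclusion at all --- chains with prescribed $(\rho,s)$-labels are read off directly from $(L,s,\rho)$; the inclusion/exclusion belongs in the computation of $\iota$, where you have none. And the proposed induction on rank via Proposition \ref{catcurse} does not interface well with configurations, since copoints are generally not cyclic flats and the configuration does not directly yield the restrictions $M|X$ for arbitrary copoints $X$; the paper needs no such induction.
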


\begin{proof}
  As noted above, we can make the further assumption that $M$ has no
  loops.  Let $\iota(N)$ denote the number of independent copoints of
  a matroid $N$, and $b(N)$ denote its number of bases.  The proof is
  built on two lemmas.

  \begin{sublemma1}
    If, for each chain
    $\emptyset=D_0\subset D_1\subset \cdots \subset D_t=E(M)$ in
    $\mathcal{Z}(M)$, we have each $r(D_i)$ and
    $\iota(M|D_i/D_{i-1})$, then we can compute the catenary data of
    $M$.
  \end{sublemma1}

  \begin{proof}
    Let $\emptyset=X_0\subset X_1\subset \cdots \subset X_r=E(M)$ be a
    flag of flats of $M$, so $r=r(M)$.  Each flat $X_i$ is the
    disjoint union of the set $U_i$ of coloops of $M|X_i$ and a cyclic
    flat $F_i$ of $M$.  Thus, $F_0\subseteq F_1\subseteq \cdots
    \subseteq F_r$.  Since $M$ has neither loops nor coloops,
    $U_0=F_0=\emptyset=U_r$ and $F_r=E(M)$.  Let $\emptyset=D_0\subset
    D_1\subset \cdots \subset D_t=E(M)$ be the distinct cyclic flats
    among $F_0,F_1,\ldots,F_r$, and let $U=U_1\cup U_2\cup \cdots\cup
    U_{r-1}$.  Thus, $|U|=r-t$.  We call $D_0, D_1, \ldots, D_t$ and
    $U$ the cyclic flats and set of coloops of the flag.  For $i$ with
    $1\leq i\leq t$, let $W_i = U\cap (D_i- D_{i-1})$, which may be
    empty.  Fix $i$ with $1\leq i<t$.  Let $j$ be the least integer
    with $D_i\subseteq X_j$, so $U_j=X_j-D_i$.  Delete all elements of
    $U_j$ from the flats in the chain $X_0\subset
    X_1\subset\cdots\subset X_j$; the distinct flats that remain form
    a chain, and the greatest is $D_i$ and the one of rank $r(D_i)-1$
    is $D_{i-1}\cup W_i$.  Thus, $W_i$ is an independent copoint of
    $M|D_i/D_{i-1}$.

    Conversely, let $\emptyset=D_0\subset D_1\subset \cdots \subset
    D_t=E(M)$ be a chain of cyclic flats. For each $i$ with $1\leq
    i\leq t$, let $W_i$ be an independent copoint of $M|D_i/D_{i-1}$
    and set $U=W_1\cup W_2\cup\cdots \cup W_t$ (so $|U| = r-t$).
    There are flags of flats of $M$ whose cyclic flats and set of
    coloops are $D_0,D_1,\ldots,D_t$ and $U$, and we obtain all such
    flags in the following way.  Take any permutation $\pi$ of
    $U\cup\{D_1,\ldots,D_t\}$ in which, for each $i$, all elements of
    $W_i$, and all $D_k$ with $k<i$, are to the left of $D_i$.  To get
    a flag from $\pi$, successively adjoin to $\emptyset$ the elements
    in initial segments of $\pi$.

    We next find the contributions of these flags to the catenary data
    of $M$.  With the type of permutation $\pi$ described above, there
    are $t$ associated integer compositions, each with $t$ parts,
    namely, for each $i$, we have the integer composition $|W_i| =
    a_{i1}+a_{i2}+\cdots+a_{it}$ where $a_{ij}$ is the number of
    elements of $W_i$ that are between $D_{j-1}$ and $D_j$ in $\pi$.
    Thus, if $j>i$, then $a_{ij}=0$.  We call the lower-triangular
    $t\times t$ matrices $A=(a_{ij})$ the matrix of compositions of
    $\pi$.  A lower-triangular $t\times t$ matrix with non-negative
    integer entries is a matrix of compositions for some such
    permutation $\pi$ if and only if its row sums are
    $|W_1|,|W_2|,\ldots,|W_t|$.  Let $a'_j$ be the sum of the entries
    in column $j$ of $A$, which is the number of elements between
    $D_{j-1}$ and $D_j$ in $\pi$.  The composition of the flag that we
    get from any permutation $\pi$ whose matrix of compositions is $A$
    is $$\bigl(0,\underbrace{1,\ldots,1}_{a'_1},|D_1|-|W_1|,
    \underbrace{1,\ldots,1}_{a'_2},|D_2|-|W_1\cup W_2|,\ldots,
    \underbrace{1,\ldots,1}_{a'_t},|D_t|-(r-t)\bigr),$$ and the number
    of such permutations $\pi$ is $$\prod_{i=1}^t
    \binom{|W_i|}{a_{i1},a_{i2},\ldots,a_{ii}} \,a'_i!,$$ where the
    multinomial coefficient accounts for choosing the $a_{ij}$
    elements of $W_i$ that will be between $D_{j-1}$ and $D_j$, and
    the factorial accounts for permutations of the elements that are
    between $D_{i-1}$ and $D_i$.

    Thus, what the chain $D_0,D_1,\ldots,D_t$ in $\mathcal{Z}(M)$ and
    the sets $W_1,W_2,\ldots,W_t$ contribute to the catenary data can
    be found from just the $t$ numbers $|W_1|,|W_2|,\ldots,|W_t|$.
    Since $|W_i| =r(M|D_i/D_{i-1})-1$ and $W_i$ can be any independent
    copoint of $M|D_i/D_{i-1}$, the catenary data of $M$ can be
    computed from the data $r(D_i)$ and $\iota(M|D_i/D_{i-1})$, over
    all chains in $\mathcal{Z}(M)$ that include $\emptyset$ and
    $E(M)$, as Lemma \ref{thm:configgivescat}.1 asserts.
  \end{proof}

  \begin{sublemma2}
    We can compute $\iota(N)$ if we have \emph{(i)}
    $b(\mathrm{Trun}(N))$ as well as \emph{(ii)} for each chain
    $\emptyset=F_0\subset F_1\subset \cdots \subset F_p \subset E(N)$
    in $\mathcal{Z}(N)$, the numbers $b(N|F_k/F_{k-1})$, for $1\leq
    k\leq p$, and $b(\mathrm{Trun}(N/F_p))$.
  \end{sublemma2}

  \begin{proof}
    Note that $b(\mathrm{Trun}(N))$ is the number of independent sets
    of size $r(N)-1$ in $N$.  Set $\mcZ'=\{ F\in\mcZ(N) \,:\,
    0<r(F)<r(N)\}$.  For $F\in \mcZ'$, let
    $$A_F =  \{I\subset E\,:\, r(I)= |I|=r(N)-1 \text{ and } 
    \cl(I\cap F)= F \}.$$ Thus,
    $$\iota(N)=b(\mathrm{Trun}(N))-\Bigl|\bigcup_{F\in \mcZ'}A_F
    \Bigr|,$$ so by inclusion/exclusion,
    \begin{equation}\label{eq:firstpie}
      \iota(N)= b(\mathrm{Trun}(N))+\sum_{S\subseteq \mcZ',
        S\ne\emptyset}(-1)^{|S|}\Bigl|\bigcap_{F\in S}A_F \Bigr|.
    \end{equation}
    Note that if $I\in A_{F_1}\cap A_{F_2}$ for $F_1,F_2\in \mcZ'$,
    then $I\cap (F_1\vee F_2)$ spans $F_1\vee F_2$ (recall, $F_1\vee
    F_2 = \cl(F_1\cup F_2)$); thus $r(F_1\vee F_2)\leq r(I)<r(N)$, so
    $F_1\vee F_2 \in \mcZ'$, and $I\in A_{F_1\vee F_2}$; hence,
    $A_{F_1}\cap A_{F_2} =A_{F_1}\cap A_{F_2}\cap A_{F_1\vee F_2}$.
    Now assume that $F_1$ and $F_2$ are incomparable, so $F_1\vee F_2$
    properly contains both of them.  If $F_1$ and $F_2$ are in a
    subset $S$ of $\mcZ'$, and if $S'$ is the symmetric difference
    $S\triangle\{F_1\vee F_2\}$, then $(-1)^{|S|}=-(-1)^{|S'|}$ and
    $$\bigcap_{F\in S}A_F  = \bigcap_{F\in S'}A_F,$$
    so such terms could cancel in the sum in equation
    (\ref{eq:firstpie}).  Pair off such terms as follows: take a
    linear extension $\leq$ of the order $\subseteq$ on $\mcZ'$ and,
    if a subset $S$ of $\mcZ'$ contains incomparable cyclic flats, let
    $F_1$ and $F_2$ be such a pair for which $(F_1,F_2)$ is least in
    the lexicographic order that $\leq$ induces on $\mcZ'\times\mcZ'$;
    cancel the term in the sum in equation (\ref{eq:firstpie}) that
    arises from $S$ with the one that arises from $S' =
    S\triangle\{F_1\vee F_2\}$.  These cancellations leave
    \begin{equation}\label{eq:secondpie}
      \iota(N)=b(\mathrm{Trun}(N))+\sum_{\substack{\text{nonempty
            chains }   \\ 
          S\subseteq \mcZ'}}
      (-1)^{|S|}\Bigl|\bigcap_{F\in S}A_F \Bigr|. 
    \end{equation}
    Now let $S$ be a chain in this sum, say consisting of $F_1\subset
    F_2\subset \cdots \subset F_p$.  The independent sets in
    $\bigcap_{i=1}^p A_{F_i}$ are the union of a basis of $N|F_1$, a
    basis of $N|F_k/F_{k-1}$, for $1<k\leq p$, and a basis of
    $\mathrm{Trun}(N/F_p)$.  Thus, $\bigl|\bigcap_{i=1}^p
    A_{F_i}\bigr|$, and so $\iota(N)$, can be found from the data
    given in Lemma \ref{thm:configgivescat}.2.
  \end{proof}

  With these lemmas, we now prove Theorem \ref{thm:configgivescat}.
  Let $(L, s,\rho)$ be the configuration of $M$.  Chains
  $\emptyset=D_0\subset D_1\subset \cdots \subset D_t=E(M)$ in
  $\mathcal{Z}(M)$ correspond to chains
  $\hat{0}=x_0<x_1<\cdots<x_t=\hat{1}$ in $L$, where $\hat{0}$ and
  $\hat{1}$ are the least and greatest elements of $L$.  The
  configuration of $M|D_i/D_{i-1}$ consists of the interval
  $[x_{i-1},x_i]$ in $L$ along with the maps $y\mapsto
  s(y)-s(x_{i-1})$ and $y\mapsto \rho(y)-\rho(x_{i-1})$.  (For what we
  do with such minors (e.g., construct more such minors and find their
  configurations), working with the matroid is equivalent to working
  with its configuration; to make it easier to follow, in the rest of
  the proof we use the matroid.)  The only other data we need in order
  to apply Lemma \ref{thm:configgivescat}.1 is $\iota(M|D_i/D_{i-1})$.
  Lemma \ref{thm:configgivescat}.2 reduces this to data that, by
  Theorem \ref{thm:jens}, can be derived from the configuration of
  $M|D_i/D_{i-1}$ and hence from that of $M$.  Specifically, letting
  $N$ be $M|D_i/D_{i-1}$, we have its configuration, say $(L',
  s',\rho')$, and the chains in $L'$ correspond to those in
  $\mathcal{Z}(N)$, and we get the configurations of
  $\mathrm{Trun}(N)$ and each $N|F_k/F_{k-1}$ and
  $\mathrm{Trun}(N/F_p)$ in Lemma \ref{thm:configgivescat}.2.  Thus,
  we get the number of bases of $\mathrm{Trun}(N)$ and each
  $N|F_k/F_{k-1}$ and $\mathrm{Trun}(N/F_p)$ by Theorem
  \ref{thm:jens}, so, as needed, by Lemma \ref{thm:configgivescat}.2
  we get $\iota(N)$.
\end{proof}

By Proposition \ref{Dowling}, Dowling matroids of the same rank based
on groups of the same finite order have the same
$\mathcal{G}$-invariant.  Here we prove that, for $r\geq 4$, Dowling
matroids based on non-isomorphic finite groups have different
configurations; thus, the converse of Theorem \ref{thm:configgivescat}
is false.  We first recall the few facts about these matroids that we
use.  Let $G$ be a finite group and $r$ a positive integer.  The set
on which the Dowling matroid $Q_r(G)$ is defined consists of the
$r+\binom{r}{2}|G|$ elements
\begin{itemize}
\item[(P1)] $p_1,p_2,\ldots,p_r$,
and
\item[(P2)] $a_{ij}$ for each $a\in G$ and $i,j\in \{1,2,\ldots,r\}$
  with $i\ne j$, where $a_{ji}= (a^{-1})_{ij}$.
\end{itemize}
The lines (rank-$2$ flats) of $Q_r(G)$ are of three types:
\begin{itemize}
\item[(L1)] $\ell_{ij}:=\{p_i,p_j\} \cup \{a_{ij} \,:\, a \in G \}$,
  where $|\{i,j\}| = 2$,
\item[(L2)] $\{a_{ij}, b_{jk}, (ab)_{ik} \}$, where $a,b\in G$ and
  $|\{i,j,k\}| = 3$, and
\item[(L3)] $\{p_i,a_{jk}\}$ with $|\{i,j,k\}| = 3$, and
  $\{a_{hi},b_{jk}\}$ with $|\{h,i,j,k\}| = 4$.
\end{itemize} 
The set $\{p_1,p_2,\ldots,p_r\}$ is a basis of $Q_r(G)$; each element
of $Q_r(G)$ is in some line that is spanned by two elements in this
basis.  For each $t$ with $2\leq t\leq r$, the restriction of $Q_r(G)$
to the closure of any $t$-element subset of $\{p_1,p_2,\ldots,p_r\}$
is isomorphic to $Q_t(G)$ and so has $t+\binom{t}{2}|G|$ elements; if
$|G|\geq 4$ (the least $m$ for which there are non-isomorphic groups
of order $m$), then all other flats of rank $t$ are strictly smaller.

\begin{prop}\label{prop:DMdiffconfig}
  For $r\geq 4$ and finite groups $G$ and $\hat{G}$, if the Dowling
  matroids $Q_r(G)$ and $Q_r(\hat{G})$ have the same configuration, then
  $G$ and $\hat{G}$ are isomorphic.
\end{prop}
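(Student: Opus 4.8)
\emph{Proof plan.} The configuration $(L,s,\rho)$ of $Q_r(G)$ records $r$ (as $\rho(\hat 1)$) and the number of points $s(\hat 1)=r+\binom r2|G|$, so $|G|$ is determined; since there is a unique group of each order at most $3$, we may assume $m:=|G|\ge 4$. Because $r\ge 4$, the closures of the $4$-element subsets of $\{p_1,\dots,p_r\}$ are rank-$4$ cyclic flats isomorphic to $Q_4(G)$, and by the facts recalled just before the statement they are exactly the rank-$4$ cyclic flats of maximum size $4+6m$; by Lemma~\ref{lem:configminors} the order ideal that one of them generates in the configuration is the configuration of $Q_4(G)$. Hence it suffices to show that the configuration of $Q_4(G)$ determines $G$ up to isomorphism when $m\ge 4$.

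Working inside (the configuration of) $Q_4(G)$, I would first reconstruct a skeleton. The rank-$2$ cyclic flats are the six joint lines $\ell_{ij}$ (size $m+2$) and the type-L2 triangles (size $3$), separated by size since $m+2>3$; the rank-$3$ cyclic flats of maximum size $3+3m$ are precisely the four flats $\cl\{p_i,p_j,p_k\}\cong Q_3(G)$, which I call \emph{planes}. Each joint line lies below exactly two planes, each plane above exactly three joint lines, and two joint lines lie below a common plane iff the rank of their join is $3$ (otherwise it is $4$); from this one recovers the incidences of the $\ell_{ij}$ with $2$-subsets of $\{1,2,3,4\}$ and of the planes with $3$-subsets, consistently up to a symmetry of the skeleton. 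Fix a plane $\Pi$, say on $\{1,2,3\}$, with joint lines $\ell_{12},\ell_{23},\ell_{13}$; the triangles below $\Pi$ are the $m^2$ type-L2 lines on $\{1,2,3\}$, and each such triangle $T$ meets $\ell_{12},\ell_{23},\ell_{13}$ in exactly one non-joint point each, say $T^{12},T^{23},T^{13}$.

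The key point is to detect these points configuration-theoretically. Let $\Pi'$ be the second plane above $\ell_{12}$. I would verify by a short gain-graph computation that, for triangles $T$ below $\Pi$ and $T'$ below $\Pi'$, one has $\rho(T\vee T')=3$ when $T^{12}=T'^{12}$ and $\rho(T\vee T')=4$ otherwise: the only point $T$ and $T'$ can share lies on $\ell_{12}$; if they share it, the associated gain graph on $\{1,2,3,4\}$ is balanced, hence of rank $3$; if they do not, it contains an unbalanced digon on the edge $12$, forcing rank $4$. Consequently two triangles below $\Pi$ have the same $\ell_{12}$-point exactly when they have the same set of rank-$3$ partners among the triangles below $\Pi'$; this partitions the $m^2$ triangles below $\Pi$ into $m$ classes, one per non-joint point of $\ell_{12}$. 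Repeating with $\ell_{23}$ and $\ell_{13}$ attaches to each triangle a triple of classes in a product of three $m$-element sets, and $T\mapsto(\overline{T^{12}},\overline{T^{23}},\overline{T^{13}})$ is injective with image the graph of the multiplication of $G$, read through $a_{12}\leftrightarrow a$, $b_{23}\leftrightarrow b$, $(ab)_{13}\leftrightarrow ab$. Thus the configuration determines a Latin square that, up to relabelling its three coordinate sets and permuting coordinates, is the Cayley table of $G$.

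The whole construction is an invariant of the configuration: the arbitrary choices (which maximum-size rank-$4$ flat, which plane $\Pi$, the identification of joint lines, the order of the three joint lines of $\Pi$) alter the resulting Latin square only by an isotopy or a conjugacy, and every conjugate of a group's Cayley table is isotopic to the Cayley table of a group isomorphic to $G$. Therefore, if $Q_r(G)$ and $Q_r(\hat G)$ have the same configuration, the Cayley tables of $G$ and $\hat G$ are isotopic quasigroups, so $G\cong\hat G$ by Albert's theorem that isotopic groups are isomorphic. I expect the third paragraph to be the crux: the difficulty is finding a configuration-intrinsic handle on the group operation, and the gain-graph balance dichotomy is what supplies it; once a Latin square isotopic to the Cayley table of $G$ has been extracted, Albert's theorem finishes the argument.
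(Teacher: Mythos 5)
Your proposal is correct, and its core is the same as the paper's argument: you reduce to $Q_4(G)$ via the maximum-size rank-$4$ cyclic flats (using the remark preceding the statement together with Lemma~\ref{lem:configminors}), and your key step --- that for $3$-point lines $T\subseteq \cl(\{p_1,p_2,p_3\})$ and $T'\subseteq \cl(\{p_1,p_2,p_4\})$ the join in $\mathcal{Z}(Q_4(G))$ has rank $3$ exactly when $T$ and $T'$ meet $\ell_{12}$ in the same point, and rank $4$ otherwise --- is precisely the dichotomy the paper uses to show that the lattice isomorphism respects the partitions $P^{a_{ij}}_s$. Where you diverge is the endgame: the paper assembles bijections $\psi_{ij}:G\to\hat{G}$ from these partitions, checks that the induced point map agrees with $\phi$ on every type (L2) line, and thereby exhibits an isomorphism $Q_3(G)\cong Q_3(\hat{G})$, concluding by Dowling's Theorem~8; you instead read off from the three partitions a ternary relation that is the graph of the multiplication of $G$ up to isotopy and conjugacy, and conclude by Albert's theorem that isotopic groups are isomorphic. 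Both finishes are sound: the paper's buys brevity by outsourcing the group recovery to Dowling's isomorphism theorem, while yours is more self-contained on the matroid side (no appeal to Dowling's Theorem~8) at the cost of the Latin-square/isotopy bookkeeping, which you handle correctly (every conjugate of a group table is isotopic to the table of $G$ or of its opposite, which is isomorphic to $G$).
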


\begin{proof}
  Assume that $|G|= |\hat{G}|\geq 4$.  By the hypothesis, there is a
  lattice isomorphism $\phi: \mcZ(Q_r(G))\to \mcZ(Q_r(\hat{G}))$ that
  preserves the rank and size of each cyclic flat.  Let $\cl$ be the
  closure operator of $Q_r(G)$, and $\cl'$ that of $Q_r(\hat{G})$.
  Applying the last remark before the proposition with $t=4$ shows
  that, with relabeling if needed, we may assume that
  $\phi(\cl(\{p_1,p_2,p_3,p_4\}))= \cl'(\{p_1,p_2,p_3,p_4\})$.  By
  restricting to these flats, we may assume that $r=4$.  Likewise, we
  may assume that $\phi(\cl(\{p_i,p_j,p_k\}))=\cl'(\{p_i,p_j,p_k\})$
  whenever $\{i,j,k\}\subset \{1,2,3,4\}$.  We will show that $Q_3(G)$
  and $Q_3(\hat{G})$ are isomorphic.  The following result by Dowling
  \cite[Theorem 8]{Dowling} then completes the proof: for $r\geq 3$,
  the matroids $Q_r(G)$ and $Q_r(\hat{G})$ are isomorphic if and only if
  the groups $G$ and $\hat{G}$ are isomorphic.

  The singleton flats $\{a_{ij}\}$ are not in $\mcZ(Q_4(G))$, but we
  show that they induce certain partitions of sets of $3$-point lines
  in $\mcZ(Q_4(G))$.  Let $i,j,s,t$ be $1,2,3,4$ in some order.  Let
  $P_{ijs}$ be the set of $3$-point lines in the plane
  $\cl(\{p_i,p_j,p_s\})$.  For $a\in G$, set
  $$P^{a_{ij}}_s = \{\ell\in P_{ijs}\,:\, a_{ij}\in \ell\}.$$  Thus,
  $\{P^{a_{ij}}_s\,:\,a\in G\}$ is a partition of $P_{ijs}$.  With
  $P_{ijt}$ and $P^{b_{ij}}_t$ defined similarly, take $\ell\in
  P^{a_{ij}}_s$ and $\ell'\in P^{b_{ij}}_t$.  By the remarks above, if
  $a\ne b$, then $\cl(\ell\cup \ell')$ is $Q_4(G)$, while if $a=b$,
  then $\cl(\ell\cup \ell')$ is a cyclic flat of rank $3$.  Now
  $\cl(\ell\cup \ell')\in \mcZ(Q_4(G))$ and the same observations hold
  for $Q_4(\hat{G})$, so the isomorphism $\phi$ maps the blocks
  $P^{a_{ij}}_s$ and $P^{a_{ij}}_t$ in the partitions of $P_{ijs}$ and
  $P_{ijt}$ to their counterparts (written, for instance, as
  $\hat{P}^{c_{ij}}_s$) in $Q_4(\hat{G})$.  Thus, for
  $\{i,j\}\subset\{1,2,3,4\}$, there is a bijection $\psi_{ij}:G\to
  \hat{G}$ with $\phi(P^{a_{ij}}_s) = \hat{P}^{(\psi_{ij}(a))_{ij}}_s$
  and $\phi(P^{a_{ij}}_t) = \hat{P}^{(\psi_{ij}(a))_{ij}}_t$.  Putting
  these maps together, we have a bijection $\psi: Q_4(G)\to
  Q_4(\hat{G})$ with $\psi(p_i) = p_i$ and $\psi(a_{ij}) =
  (\psi_{ij}(a))_{ij}$ for $\{i,j\}\subset \{1,2,3,4\}$ and $a\in G$.
  A $3$-point line $\ell=\{a_{ij},b_{jk},(ab)_{ik}\}$ of $Q_4(G)$ is
  in the equivalence classes $P^{a_{ij}}_k$, $P^{b_{jk}}_i$, and
  $P^{(ab)_{ik}}_j$, so $\phi(\ell)$ is in the equivalence classes
  $\hat{P}^{\psi(a_{ij})}_k$, $\hat{P}^{\psi(b_{jk})}_i$, and
  $\hat{P}^{\psi((ab)_{ik})}_j$, and so $\phi(\ell) = \psi(\ell)$.
  Thus, since $\psi$ also clearly preserves the lines of type (L1),
  the restriction of $\psi$ to $\cl(\{p_1,p_2,p_3\})$ shows that
  $Q_3(G)$ and $Q_3(\hat{G})$ are isomorphic, so, by Dowling's result,
  $G$ and $\hat{G}$ are isomorphic.
\end{proof}

\section{Detecting free products}\label{sec:detectfrp} 

The Tutte polynomial reflects direct sums in a remarkably faithful
way.  In \cite{Merino}, Merino, de Mier, and Noy showed that the Tutte
polynomial $T(M;x,y)$ factors as $A(x,y)B(x,y)$, for polynomials
$A(x,y)$ and $B(x,y)$ over $\mathbb{Z}$, if and only if $A(x,y)$ and
$B(x,y)$ are the Tutte polynomials of the constituents in some direct
sum factorization of $M$.  By that result and Theorem \ref{GtoT}, from
$\mathcal{G}(M)$ one can deduce whether $M$ is a direct sum; however,
we do not know whether the $\mathcal{G}$-invariants of the
constituents are determined by $\mathcal{G}(M)$.  In this section we
prove a result of this type for free products.

As noted earlier, the free product $M\frp U_{0,1}$ is the free
extension of $M$, while $U_{1,1} \frp M$ is the free coextension.  The
matroids in Figure \ref{fig:sameconfig} show that the
$\mathcal{G}$-invariant cannot detect free extensions (hence,
coextensions).  Thus, we consider only \emph{proper} free products
$M_1 \frp M_2$, by which we mean that each of $M_1$ and $M_2$ has at
least two cyclic flats.  Below we reduce the problem to \emph{sharp}
free products, which are proper free products $M_1 \frp M_2$ in which
$M_1$ has no coloops and $M_2$ has no loops. A \emph{pinchpoint} of
the lattice $\mathcal{Z}(M)$ of cyclic flats of $M$ is a cyclic flat,
neither the maximum nor the minimum, that each cyclic flat either
contains or is contained in.  We now state our main result.

\begin{thm}\label{thm:freeprdetect}
  From $\mathcal{G}(M)$, one can deduce whether $M$ is a proper free
  product.  Also, for each pinchpoint $X$ of $\mcZ(M)$, the
  $\mathcal{G}$-invariants of the constituents of the corresponding
  sharp free product factorization of $M$ can be derived from
  $\mathcal{G}(M)$ and $r(X)$.
\end{thm}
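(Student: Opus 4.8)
The plan is to reduce to \emph{sharp} free products and then recognize them from the lattice of cyclic flats. The structural input relied on (essentially from Crapo and Schmitt \cite{fpm}) is that for a loopless, coloopless matroid $M$ the sharp free product factorizations of $M$ correspond exactly to the pinchpoints $X$ of $\mcZ(M)$, via $M = (M|X)\frp(M/X)$; in particular such an $M$ is a proper free product if and only if $\mcZ(M)$ has a pinchpoint. First I would handle loops and coloops: their numbers are read off from $\mathcal{G}(M)$ (as noted after Proposition \ref{prop:manyloops}), and if $M_0$ denotes $M$ with all loops and coloops deleted, then $\mathcal{G}(M_0)$ is obtained from $\mathcal{G}(M)$ using Proposition \ref{prop:manyloops} for loops and, passing through $M^*$, Propositions \ref{prop:dual} and \ref{prop:manyloops} for coloops; conversely $\mathcal{G}(M)$ is recovered from $\mathcal{G}(M_0)$ and those two counts. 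Since in any free product the loops lie in the first factor and the coloops in the second, associativity of $\frp$ shows that $M$ is a proper free product exactly when $M_0$ is, with the sharp factorizations of $M$ obtained from those of $M_0$ by reattaching loops to the first constituent and coloops to the second (adjusting their $\mathcal{G}$-invariants accordingly). So we may assume $M = M_0$ is loopless and coloopless.

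Next I would extract candidate constituents from $\mathcal{G}(M)$. By Propositions \ref{saturatedchains} and \ref{prop:recovernumbercoloops}, the numbers $f_k(s)$ and $f_k(s,0)$ (rank-$k$ flats of size $s$, respectively those that are cyclic) are computable from $\mathcal{G}(M)$. For each pair $(k,s)$ with $0 < k < r$ and $f_k(s) = 1 = f_k(s,0)$, the unique rank-$k$, size-$s$ flat $F$ is cyclic, and Proposition \ref{saturatedchains} yields $\mathcal{G}(M|F)$ and $\mathcal{G}(M/F)$. The geometric point to prove is that every pinchpoint appears among these $F$: if $X$ is a pinchpoint with $r(X) = k$ and $|X| = s$, then $f_k(s) = 1$. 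To see this, write an arbitrary rank-$k$, size-$s$ flat $F$ as $F_0$ together with the coloops of $M|F$, where $F_0 = \bigcup\{C : C \text{ a circuit of } M|F\}$ is a cyclic flat of $M$. If $r(F_0) = k$, then $F = F_0$ is a cyclic flat comparable to $X$ of the same rank, so $F = X$. If $r(F_0) < k$, then, $X$ being a pinchpoint with $r(F_0) < r(X)$, we get $F_0 \subsetneq X$; but then $M|X/F_0$ has ground set of size $|X| - |F_0| = k - r(F_0)$ equal to its rank, hence is free, contradicting the easy fact that a coloopless matroid (such as $M|X$, since $X$ is cyclic) has no free contraction by a flat of smaller rank --- such a contraction would produce a coloop of the whole matroid. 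Thus $f_k(s) = 1$ for every pinchpoint $X$, so all pinchpoints lie among the candidates.

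Finally I would single out the pinchpoints among the candidates using only derived data, via the criterion: for a candidate $F$ (the unique rank-$k$, size-$s$ flat, cyclic, $0 < k < r$), $F$ is a pinchpoint of $\mcZ(M)$ if and only if $|\mcZ(M)| = |\mcZ(M|F)| + |\mcZ(M/F)| - 1$. Indeed, by Lemma \ref{lem:configminors} the cyclic flats of $M$ contained in $F$ are exactly $\mcZ(M|F)$, those containing $F$ form a copy of $\mcZ(M/F)$, and these two families meet only in $\{F\}$; being a pinchpoint means their union is all of $\mcZ(M)$, which is precisely the displayed identity (and $F$ is automatically neither the top nor the bottom of $\mcZ(M)$ since $0 < k < r$). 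As $|\mcZ(N)| = \sum_{i,t} f_i(N;t,0)$ is derivable from $\mathcal{G}(N)$ for every $N$, and $\mathcal{G}(M)$, $\mathcal{G}(M|F)$, $\mathcal{G}(M/F)$ are all in hand, this identity is testable from $\mathcal{G}(M)$. If it holds for some admissible $(k,s)$, then $F$ is a pinchpoint, $M = (M|F)\frp(M/F)$ is a sharp free product, hence proper (as $F$ is strictly between the top and bottom of $\mcZ(M)$), and $\mathcal{G}(M|F)$, $\mathcal{G}(M/F)$ are the $\mathcal{G}$-invariants of its constituents; knowing $r(X)$ pins down which pair $(k,s)$, hence which factorization, is meant. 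If the identity fails for every admissible pair, then by the previous paragraph $\mcZ(M)$ has no pinchpoint, so $M$ is not a proper free product; undoing the loop/coloop reduction completes the proof.

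The step I expect to be the main obstacle is the reduction to the sharp case together with a correct statement and use of the pinchpoint/sharp-factorization dictionary: one must track how loops and coloops distribute across a free product, use associativity of $\frp$, and confirm that the lattice of cyclic flats of a sharp free product $M_1 \frp M_2$ is the ``vertical gluing'' of $\mcZ(M_1)$ and $\mcZ(M_2)$ at $E(M_1)$ --- so that $E(M_1)$ is a pinchpoint and, conversely, each pinchpoint yields such a factorization. By comparison, the remaining work (computing $f_k(s,c)$, the constituents' $\mathcal{G}$-invariants, and the cyclic-flat counts) is routine once Section \ref{sec:parameters} is available.
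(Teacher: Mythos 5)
Your proposal is correct and follows essentially the same route as the paper: you show each pinchpoint is the unique flat of its rank and size (by the same cyclic-part/free-contraction argument), extract $\mathcal{G}(M|X)$ and $\mathcal{G}(M/X)$ via Proposition \ref{saturatedchains}, recognize pinchpoints by counting cyclic flats via Proposition \ref{prop:recovernumbercoloops}, and invoke Lemma \ref{cutpoint} to translate this into the free-product statement. The differences are cosmetic: the paper dispenses with your preliminary loop/coloop reduction by applying Lemma \ref{cutpoint} to $M$ directly, and it phrases your single counting identity $|\mcZ(M)|=|\mcZ(M|X)|+|\mcZ(M/X)|-1$ as two conditions comparing the cyclic-flat counts of $M|X$ and $M/X$ with those of $M$ at ranks at most $k$ and at least $k$.
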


We use \cite[Proposition 6.1]{freeUniqFact}, which we cite next.

\begin{lemma}\label{lemma:fpcf}
  Let $M_1$ and $M_2$ be matroids on disjoint ground sets.  Set
  $$\mcZ'= \bigl(\mcZ(M_1) -\{ E(M_1)\}\bigr) \cup \{E(M_1)\cup Y
  \,:\, Y\in\mcZ(M_2)-\{\emptyset\}\,\}.$$
  If the factorization $M_1\frp M_2$ is sharp, then $\mcZ(M)$ is
  $\mcZ'\cup \{E(M_1)\}$; otherwise it is $\mcZ'$.
\end{lemma}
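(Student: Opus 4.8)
The approach is to describe the flats of $M=M_1\frp M_2$ explicitly from the rank formula (\ref{eq:frprk}) and then extract the cyclic ones. Write $E_i=E(M_i)$ and $r_i=r_{M_i}$. For $X=X_1\cup X_2$ with $X_i\subseteq E_i$, (\ref{eq:frprk}) gives $r_M(X)=\min\{r_1(E_1)+r_2(X_2),\ r_1(X_1)+|X_2|\}$, and I would split on which term attains the minimum. If $r_1(E_1)+r_2(X_2)\le r_1(X_1)+|X_2|$ (``type A''), then (\ref{eq:frprk}) gives $r_M(X\cup e)=r_M(X)$ for every $e\in E_1$, so $E_1\subseteq\cl_M(X)$; hence a type-A flat has $X_1=E_1$, and once $X_1=E_1$ a short computation shows $\cl_M(X)=E_1\cup\cl_{M_2}(X_2)$ and $M|X=M_1\frp(M_2|X_2)$. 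Thus the type-A flats are exactly the sets $E_1\cup F$ with $F$ a flat of $M_2$. If instead $r_1(X_1)+|X_2|<r_1(E_1)+r_2(X_2)$ (``type B'') --- equivalently, the nullity $|X_2|-r_2(X_2)$ is less than $r_1(E_1)-r_1(X_1)$ --- then $X_1\ne E_1$, and, using monotonicity of nullity, $r_M(Z)=r_1(Z_1)+|Z_2|$ for every $Z=Z_1\cup Z_2\subseteq X$, so $M|X$ is the direct sum of $M_1|X_1$ with the free matroid on $X_2$. Hence $\cl_M(X)=\cl_{M_1}(X_1)\cup X_2$, so the type-B flats are exactly the sets $X_1\cup X_2$ with $X_1$ a proper flat of $M_1$ and $X_2$ an arbitrary subset of $E_2$ obeying the nullity inequality.

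To select the cyclic flats I would use the two standard descriptions of the coloops of a free product, each proved by comparing $r_1(E_1)+r_2(E_2)$ with the largest size of a basis of $M_1\frp M_2$ avoiding a fixed element: an element of $E_1$ is a coloop of $M_1\frp M_2$ iff it is a coloop of $M_1$ \emph{and} $M_2$ has nullity $0$, and an element of $E_2$ is a coloop of $M_1\frp M_2$ iff it is a coloop of $M_2$. For a type-B flat, every element of $X_2$ is a coloop of $M|X=(M_1|X_1)\oplus(\text{free matroid on }X_2)$, so such a flat is cyclic only if $X_2=\emptyset$, in which case $M|X=M_1|X_1$; together with $X_1\ne E_1$ this shows the cyclic type-B flats are precisely the members of $\mcZ(M_1)-\{E_1\}$. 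For a type-A flat $E_1\cup F$ with $F$ a flat of $M_2$, we have $M|X=M_1\frp(M_2|F)$, so by the coloop descriptions $M|X$ is coloopless iff $M_2|F$ is coloopless --- i.e., $F\in\mcZ(M_2)$ --- and, in addition, $M_1$ is coloopless or $M_2|F$ has positive nullity.

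It remains to sort out which cyclic flats $F$ of $M_2$ yield cyclic flats $E_1\cup F$ --- the one genuinely fiddly point. If $F$ properly contains $\cl_{M_2}(\emptyset)$, then $M_2|F$ contains a circuit of size at least $2$, so has positive nullity and the extra condition holds automatically; if $F=\cl_{M_2}(\emptyset)$ and $M_2$ has a loop, then $M_2|F$ is a nonempty set of loops, again of positive nullity. In both cases $F\in\mcZ(M_2)-\{\emptyset\}$ and $E_1\cup F$ is a cyclic flat. The only remaining case is $F=\cl_{M_2}(\emptyset)=\emptyset$, i.e. $M_2$ loopless, which gives the flat $E_1$ itself, cyclic exactly when $M_1$ is coloopless. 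Assembling the two types, the cyclic flats of $M$ are $\bigl(\mcZ(M_1)-\{E_1\}\bigr)\cup\{E_1\cup Y:Y\in\mcZ(M_2)-\{\emptyset\}\}=\mcZ'$, together with $E_1$ precisely when $M_1$ has no coloop and $M_2$ has no loop; and $E_1$ lies in neither part of $\mcZ'$ because the ground sets are disjoint. Hence $\mcZ(M)=\mcZ'\cup\{E_1\}$ when $M_1$ is coloopless and $M_2$ is loopless, and $\mcZ(M)=\mcZ'$ otherwise. Since a sharp factorization is exactly one in which $M_1$ has no coloop and $M_2$ has no loop, this is the dichotomy asserted by the lemma. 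I expect the last paragraph --- the type-A bookkeeping, juggling $\cl_{M_2}(\emptyset)$, larger cyclic flats of $M_2$, loops of $M_2$, and the nullity clause inherited from the coloop description --- to be the only real obstacle; everything else is a direct manipulation of (\ref{eq:frprk}).
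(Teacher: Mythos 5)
Your proof is correct. There is nothing internal to compare it with: the paper does not prove this lemma but quotes it as Proposition 6.1 of Crapo and Schmitt \cite{freeUniqFact}, so your direct derivation from the rank formula (\ref{eq:frprk}) serves as a self-contained substitute. Along the way you establish more than the statement itself: a full description of the flats of $M_1\frp M_2$ (those containing $E(M_1)$, whose restrictions are $M_1\frp(M_2|F)$ for flats $F$ of $M_2$, versus those of the form $X_1\cup X_2$ with $X_1$ a proper flat of $M_1$ and the restriction a direct sum of $M_1|X_1$ with a free matroid on $X_2$), together with the coloop criteria for a free product; I checked the closure computations, the two coloop characterizations, and the positive-nullity argument for nonempty cyclic flats of $M_2$, and they all hold. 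One caution about your closing sentence: the paper defines ``sharp'' as a \emph{proper} free product in which $M_1$ has no coloops and $M_2$ has no loops, whereas what you actually prove is that $E(M_1)\in\mcZ(M)$ if and only if $M_1$ is coloopless and $M_2$ is loopless, with properness playing no role. That is the reading under which the lemma is literally true (e.g.\ $U_{1,2}\frp U_{1,1}$ is improper yet has $E(M_1)$ cyclic), and it is all that is needed where the lemma is applied, so the discrepancy lies in the paper's wording rather than in your argument.
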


We first explain a reduction that we use: from each proper
factorization of $M$, we get a sharp factorization of $M$.  Let $M$ be
the proper free product $M_1\frp M_2$.  It follows from Lemma
\ref{lemma:fpcf} that $\mcZ(M)$ has a pinchpoint.  In particular, if
the factorization is sharp, then $E(M_1)$ is a pinchpoint.  If $M_1$
has coloops, then the maximum flat in $\mathcal{Z}(M_1)$ (which is not
$E(M_1)$) is a pinchpoint of $\mathcal{Z}(M)$.  If $Y$ is any set of
coloops of $M_1$, then another proper factorization of $M$ is
$(M_1\del Y)\frp ((M|Y) \frp M_2)$, and $(M|Y) \frp M_2$ applies free
coextension to $M_2$ a total of $|Y|$ times.  If $Y$ is the set of all
coloops of $M_1$, then the factorization
$(M_1\del Y)\frp ((M|Y) \frp M_2)$ of $M$ is sharp.  Likewise, if
$M_2$ has loops, then $E(M_1)\cup \cl_{M_2}(\emptyset)$ is a
pinchpoint of $\mathcal{Z}(M)$.  If $Y\subseteq \cl_{M_2}(\emptyset)$,
then another proper factorization of $M$ is
$(M_1\frp (M_2|Y))\frp (M_2\del Y)$, and $M_1\frp (M_2|Y)$ applies
free extension to $M_1$ a total of $|Y|$ times.  If
$Y= \cl_{M_2}(\emptyset)$, then the factorization
$(M_1\frp (M_2|Y))\frp (M_2\del Y)$ of $M$ is sharp.  Thus, finding
the sharp factorizations of $M$ is the crux of the problem.  The next
lemma (which recasts \cite[Theorem 6.3]{freeUniqFact}) relates this
more precisely to the pinchpoints of $\mcZ(M)$.

\begin{lemma}\label{cutpoint} 
  A matroid $M$ is a proper free product if and only if the lattice
  $\mcZ(M)$ has a pinchpoint.  Furthermore, the map
  $X\mapsto (M|X)\frp (M/X)$ is a bijection from the set of
  pinchpoints of $\mcZ(M)$ onto the set of sharp free product
  factorizations of $M$.
\end{lemma}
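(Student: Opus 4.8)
The lemma is essentially \cite[Theorem~6.3]{freeUniqFact} rephrased in terms of pinchpoints, with Lemma~\ref{lemma:fpcf} as the engine of the rephrasing. The plan is to exhibit two mutually inverse maps between the set of pinchpoints of $\mcZ(M)$ and the set of sharp free product factorizations of $M$: the map $\Phi$ sending a sharp factorization $M_1\frp M_2$ to $E(M_1)$, and the map $\Psi$ sending a pinchpoint $X$ to $(M|X)\frp (M/X)$. The stated equivalence then follows by combining this bijection with the reduction, described just before the lemma, that turns any proper factorization of $M$ into a sharp one. (As in the rest of the section, I may assume throughout that $M$ has no loops and no coloops.)

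First I would check that $\Phi$ is well defined and injective. If $M=M_1\frp M_2$ is sharp, Lemma~\ref{lemma:fpcf} gives $\mcZ(M)=\mcZ'\cup\{E(M_1)\}$, and $E(M_1)$ is comparable with every member of $\mcZ'$, since it contains each flat in $\mcZ(M_1)-\{E(M_1)\}$ and lies inside each flat $E(M_1)\cup Y$; properness of the factorization forces $E(M_1)$ to differ from both the maximum and the minimum of $\mcZ(M)$, so $E(M_1)$ is a pinchpoint. A short computation with the rank formula~(\ref{eq:frprk})---for $A\subseteq E(M_1)$ one gets $r_M(A)=r_1(A)$, and for $B\subseteq E(M_2)$ one gets $r_{M/E(M_1)}(B)=r_2(B)$---shows $M|E(M_1)=M_1$ and $M/E(M_1)=M_2$, so $\Psi\circ\Phi$ is the identity and $\Phi$ is injective.

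Next I would check that $\Psi$ is well defined, i.e.\ that for a pinchpoint $X$ the pair $(M|X, M/X)$ is a sharp factorization of $M$. The equality $M=(M|X)\frp (M/X)$ is the special case of \cite[Theorem~6.3]{freeUniqFact} that we need; alternatively, writing $A\subseteq X$ and $B\subseteq E(M)-X$, the rank formula~(\ref{eq:frprk}) reduces it to the identity $r_M(A\cup B)=\min\{r_M(X\cup B),\,r_M(A)+|B|\}$, where ``$\le$'' follows from $A\cup B\subseteq X\cup B$ and the bound $r_M(A\cup B)\le r_M(A)+|B|$, and ``$\ge$'' needs only the case that $B$ is dependent over $A$, in which the pinchpoint property forces $X\subseteq\cl_M(A\cup B)$ (the largest cyclic flat $F^{\circ}$ inside $\cl_M(A\cup B)$ satisfies $X\subseteq F^{\circ}$ or $F^{\circ}\subseteq X$, and the latter, together with $A\subseteq X$ and $B\cap X=\emptyset$, would make $B$ independent over $A$), so that $r_M(X\cup B)\le r_M(A\cup B)$. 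This factorization is sharp: $M|X$ has no coloops because $X$ is cyclic, $M/X$ has no loops because $X$ is a flat, and by Lemma~\ref{lem:configminors} the lattices $\mcZ(M|X)$ and $\mcZ(M/X)$ are the intervals $[\hat{0},X]$ and $[X,\hat{1}]$ of $\mcZ(M)$, each with at least two elements since $X$ is neither the maximum nor the minimum of $\mcZ(M)$. Because $E(M|X)=X$, we have $\Phi\circ\Psi=\mathrm{id}$, so $\Phi$ and $\Psi$ are mutually inverse bijections. For the equivalence: if $M$ is a proper free product, the reduction before the lemma gives a sharp factorization, so $\Phi$ produces a pinchpoint of $\mcZ(M)$; conversely, any pinchpoint $X$ yields, via $\Psi$, a sharp---hence proper---factorization of $M$. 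The one genuinely substantive point is the equality $M=(M|X)\frp (M/X)$ for a pinchpoint $X$ (the invocation of \cite[Theorem~6.3]{freeUniqFact}, or the verification of the rank identity above); the remaining steps are bookkeeping with Lemmas~\ref{lemma:fpcf} and~\ref{lem:configminors} and the rank formula.
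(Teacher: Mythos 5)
Your proposal is correct in substance, but it takes a genuinely different route from the paper: the paper gives no proof of Lemma~\ref{cutpoint} at all, presenting it as a recasting of the Crapo--Schmitt unique factorization theorem \cite[Theorem 6.3]{freeUniqFact}, with only the ``proper $\Rightarrow$ pinchpoint'' direction sketched beforehand via Lemma~\ref{lemma:fpcf}. You instead verify the crux --- that a pinchpoint $X$ yields $M=(M|X)\frp (M/X)$ --- directly from the rank formula (\ref{eq:frprk}), using the largest cyclic flat $F^{\circ}$ inside $\cl(A\cup B)$ and its comparability with $X$; this is a sound, self-contained substitute for the citation (the two facts you leave implicit are standard and easy to supply: the union of the circuits contained in a flat is a cyclic flat, hence the largest one inside that flat, and if $F^{\circ}\subseteq X$ then a circuit witnessing dependence of $B$ over $A$ would lie in $F^{\circ}\subseteq X$ yet meet $B$, which is disjoint from $X$). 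The checks that $\Phi$ and $\Psi$ are well defined and mutually inverse are also right. The one blemish is your opening reduction: Section 7 of the paper does \emph{not} assume $M$ is loopless and coloopless, and it cannot --- a proper (even sharp) free product $M_1\frp M_2$ inherits the loops of $M_1$ and the coloops of $M_2$, and Theorem~\ref{thm:freeprdetect} applies the lemma to arbitrary $M$ --- so the blanket assumption is unjustified as stated. Happily it is also unnecessary: the only place you use it is to quote Lemma~\ref{lem:configminors} verbatim (which carries that hypothesis), and the facts you actually need --- that $\mcZ(M|X)$ and $\mcZ(M/X)$ are the intervals of $\mcZ(M)$ below and above the cyclic flat $X$ --- hold for arbitrary matroids and can be cited from \cite{cyc}; every other step of your argument goes through unchanged. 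In short, your route buys self-containedness where the paper buys brevity; just drop (or genuinely justify) the loop/coloop assumption so that your proof covers the stated generality.
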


We now prove the main result of this section.

\begin{proof}[Proof of Theorem \ref{thm:freeprdetect}]
  Recall from Section \ref{sec:parameters} that from $\mathcal{G}(M)$
  we can deduce the number $f_k(s)$ of flats, and the number
  $f_k(s,0)$ of cyclic flats, of rank $k$ and size $s$ in $M$.  Set
  $$C= \{k\,:\, 0 < k < r(M) \, \text{ and } \,\sum_{s}f_k(s,0) =1\},$$ so
  $k\in C$ if and only if $0 < k < r(M)$ and exactly one cyclic flat
  of $M$ has rank $k$.  The rank of any candidate pinchpoint is in
  $C$, so if $C=\emptyset$, then $M$ is not a proper free product.
  When $C\ne\emptyset$, we test each $k\in C$ as follows.

  Let $X$ be the unique cyclic flat of rank $k$.  Set $s_0=|X|$. If
  $f_k(s_0)>1$, then there is a non-cyclic flat $F$ of rank $k$ and
  size $s_0$.  The flat $Y$ obtained from $F$ by deleting the coloops
  of $M|F$ is cyclic, and $s_0-|Y|=|F-Y| = k-r(Y)$.  If $Y \subset X$,
  then comparing rank and size shows that $M|X$ has coloops (the
  elements of $X-Y$), contrary to $X$ being cyclic.  Hence,
  $Y \not\subset X$, so $X$ is not a pinchpoint.  Thus, if $X$ is a
  pinchpoint, then $f_k(s_0)=1$.

  Now assume that $X$ is the only flat of rank $k$ and size $s_0$.  By
  Proposition \ref{saturatedchains}, from $\mathcal{G}(M)$, we get
  $\mathcal{G}(M|X)$ and $\mathcal{G}(M/X)$.  Thus, by Proposition
  \ref{prop:recovernumbercoloops} we can derive the number of cyclic
  flats in $M|X$ and in $M/X$.  Now $X$ is a pinchpoint of $M$ if and
  only if (i) the number of cyclic flats in $M|X$ equals the number of
  cyclic flats of rank at most $k$ in $M$, and (ii) the number of
  cyclic flats in $M/X$ equals the number of cyclic flats of rank at
  least $k$ in $M$.  Thus, we can detect pinchpoints, and for any
  pinchpoint $X$, we can derive $\mathcal{G}(M|X)$ and
  $\mathcal{G}(M/X)$.
\end{proof}

\section{Erratum} 

The description of the $\mathcal{G}$-invariant of the truncation of a
matroid given in Proposition \ref{prop:trun} is incorrect.  The
correct statement is given in the following proposition.

\begin{prop}
  Let $M$ be a matroid having positive rank $r$.  For any
  $(n,r-1)$-composition $(a_0,a_1,\ldots,a_{r-2},a'_{r-1})$, the
  number of flags of $\mathrm{Trun}(M)$ that have that composition,
  that is,
  $\nu (\mathrm{Trun}(M); a_0, a_1, \ldots, a_{r-2},a'_{r-1} )$, is
  $$\sum\limits_{\substack{a_{r-1},a_r\in\mathbb{N} \,:\\    a_{r-1}+a_r=a'_{r-1}}}
  \nu (M ; a_0, a_1, \ldots, a_{r-2},a_{r-1}, a_r
  )\cdot\frac{a_{r-1}}{a_{r-1} + a_r}.$$ Thus,
  $$\mathcal{G}(\mathrm{Trun}(M )) = \sum
  \nu (M ; a_0, a_1, \ldots, a_r ) \cdot\frac{a_{r-1}}{a_{r-1} + a_r}
  \cdot\gamma(a_0, a_1, \ldots , a_{r-2}, a_{r-1} + a_r ),$$ where the
  sum is over all $(n,r)$-compositions $(a_0, a_1, \ldots, a_r )$.
\end{prop}

\begin{proof}
  Let $P$ be the set of pairs $((X_i),e)$ where
  $(X_i)=(X_0,X_1,\ldots,X_{r-2},E(M))$ is a flag of
  $\mathrm{Trun}(M)$ with
  composition $(a_0,a_1,\ldots,a_{r-2},a'_{r-1})$, and
  $e\in E(M)-X_{r-2}$.  Thus,
  $|P|=\nu (\mathrm{Trun}(M); a_0, a_1, \ldots, a_{r-2},a'_{r-1}
  )\cdot a'_{r-1}$.  Now
  $$\{\cl_M(X_{r-2}\cup e)-X_{r-2}\,:\, e\in E(M)-X_{r-2}\}$$ is
  a partition of the set $E(M)-X_{r-2}$, so we can also get the pairs
  in $P$ by picking a flag of $M$ whose composition has the form
  $(a_0,a_1,\ldots,a_{r-2},*,*)$ and choosing an element that is in
  the largest two flats of the flag and in none of the smaller flats.
  Thus,
  $$|P|=\sum\limits_{\substack{a_{r-1},a_r\in\mathbb{N} \,:\\
      a_{r-1}+a_r=a'_{r-1}}} 
  \nu (M ; a_0, a_1, \ldots,a_{r-2},a_{r-1}, a_r )\cdot a_{r-1}.$$
  Setting the two expressions for $|P|$ equal to each other and
  dividing by $a'_{r-1}$, which is $a_{r-1}+a_r$, gives the result.
\end{proof}

\vspace{5pt}

\begin{center}
 \textsc{Acknowledgments}
\end{center}

\vspace{3pt}

We thank the referees for their careful reading and constructive
comments.

\end{document}